\def \cA{\mathcal{A}}
\def \cC{\mathcal{C}}
\def \cF{\mathcal{F}}
\def \cH{\mathcal{H}}
\def \cJ{\mathcal{J}}
\def \cL{\mathcal{L}}
\def \cO{\mathcal{O}}
\def \cT{\mathcal{T}}
\def \P{\mathsf P}
\def \E{\mathsf E}
\def \N{\mathbb{N}}
\def \R{\mathbb{R}}
\def \F{\mathbb F}
\def \ud{\mathrm{d}}
\def \lbracket{(}
\newcommand{\eps}{\varepsilon}
\newtheorem{theorem}{Theorem}[section]
\newtheorem{lemma}[theorem]{Lemma}
\newtheorem{corollary}[theorem]{Corollary}
\newtheorem{proposition}[theorem]{Proposition}
\newtheorem{remark}[theorem]{Remark}
\newtheorem{assumption}[theorem]{Assumption}
\theoremstyle{definition}
\DeclareMathOperator{\sign}{sign}
\definecolor{ballblue}{rgb}{0.13, 0.67, 0.8}
\title[Stopper-(singular)controller games with constrained controls]{Zero-sum stopper vs.\ singular-controller games \\ with constrained control directions}
\author[Bovo]{Andrea Bovo}
\author[De Angelis]{Tiziano De Angelis}
\author[Palczewski]{Jan Palczewski}
\subjclass[2020]{91A05, 91A15, 60G40, 93E20, 49J40.}
\keywords{zero-sum stochastic games, singular control, optimal stopping, controlled diffusions, constrained controls, variational inequalities, obstacle problems, gradient constraint.}
\address{A.\ Bovo: School of Management and Economics, Dept.\ ESOMAS, University of Torino, Corso Unione Sovietica, 218 Bis, 10134, Torino, Italy; Collegio Carlo Alberto, Piazza Arbarello 8, 10122, Torino, Italy.}
\email{\href{mailto:andrea.bovo@unito.it}{andrea.bovo@unito.it}}
\address{T.\ De Angelis: School of Management and Economics, Dept.\ ESOMAS, University of Torino, Corso Unione Sovietica, 218 Bis, 10134, Torino, Italy; Collegio Carlo Alberto, Piazza Arbarello 8, 10122, Torino, Italy.}
\email{\href{mailto:tiziano.deangelis@unito.it}{tiziano.deangelis@unito.it}}
\address{J.\ Palczewski: School of Mathematics, University of Leeds, Woodhouse Lane, LS2 9JT Leeds, UK.}
\email{\href{mailto:j.palczewski@leeds.ac.uk}{j.palczewski@leeds.ac.uk}}
\date{\today}
\numberwithin{equation}{section}
\begin{document}
\begin{abstract}
We consider a class of zero-sum stopper vs.\ singular-controller games in which the controller can only act on a subset $d_0<d$ of the $d$ coordinates of a controlled diffusion. Due to the constraint on the control directions these games fall outside the framework of recently studied variational methods. In this paper we develop an approximation procedure, based on $L^1$-stability estimates for the controlled diffusion process and almost sure convergence of suitable stopping times. That allows us to prove existence of the game's value and to obtain an optimal strategy for the stopper, under continuity and growth conditions on the payoff functions.
This class of games is a natural extension of (single-agent) singular control problems, studied in the literature, with similar constraints on the admissible controls.
\end{abstract}

\maketitle

\section{Introduction}

A zero-sum stopper vs.\ singular-controller game can be formulated as follows. Given a time horizon $T\in(0,\infty)$, two players observe a stochastic dynamics $X=(X_s)_{s\in[0,T]}$ in $\R^d$ described by a controlled stochastic differential equation (SDE). One player (the minimiser) may exert controls that impact additively on the dynamics and that may be singular with respect to the Lebesgue measure, as functions of time. The other player (the maximiser) decides when the game ends by selecting a stopping time in $[0,T]$. At the end of the game, the first player (controller) pays the second one (stopper) a payoff that depends on time, on the sample paths of $X$ and on the amount of control exerted. A natural question is whether the game admits a value, i.e., if the same expected payoff is attained irrespective of the order in which the players choose their (optimal) actions.

In \cite{bovo2022variational} we studied zero-sum stopper vs.\ singular-controller games in a diffusive setup with controls that can be exerted in all $d$ coordinates of the process $X$. The approach is based on a mix of probabilistic and analytic methods for the study of a class of variational inequalities with so-called obstacle and gradient constraints. It is shown that the value of the game is the maximal solution of such variational inequality. More precisely, it is the maximal {\em strong solution} in the sense that it belongs to the Sobolev space of functions that admit two spatial derivatives and one time derivative, locally in $L^p$ (i.e., in $W^{1,2,p}_{\ell oc}$). The methods rely crucially on the assumption that {\em all} coordinates of the process can be controlled. Indeed, that determines a particular form of the gradient constraint that enables delicate PDE estimates for a-priori bounds on the solution. When only $d_0<d$ coordinates are controlled, i.e., there is a constraint on the control directions, the results from \cite{bovo2022variational} are not applicable (cf.\ Section \ref{sec:challenge} for details) and the existence of a value is an open question.

In this paper, we continue our study of zero-sum stopper vs.\ singular-controller games by showing that even in the case $d_0<d$ the game admits a value. We also provide an optimal strategy for the stopper and we observe that it is of a slightly different form compared to the one obtained in \cite{bovo2022variational} (see Remark \ref{rem:theta} below for details). The line of proof follows an approximation procedure, governed by a parameter $\gamma\in[0,1]$, by which we relax the constraints on the class of admissible controls. For $\gamma=1$ we are in the same setting as in \cite{bovo2022variational}, whereas $\gamma=0$ corresponds to the constrained case. It turns out that for $\gamma\in(0,1)$ we have an intermediate situation for which a suitable adaptation of the arguments from \cite{bovo2022variational} is possible. The idea is then to obtain the value of the constrained game in the limit as $\gamma\downarrow 0$. 

When letting $\gamma\downarrow 0$, we need $L^1$-stability estimates for the controlled dynamics. These estimates involve local times and a-priori bounds on the candidate optimal controls and they are not standard in the literature. Optimality of the stopper's strategy is derived via an almost sure convergence for a suitable sequence of stopping times, based on path properties of the controlled dynamics and uniform convergence of the approximating value functions as $\gamma\downarrow 0$. We can no longer guarantee the solvability of the associated variational problem in the strong (Sobolev) sense but, of course, our value function satisfies both the appropriate gradient constraint and obstacle constraint. Moreover, we show that the value of our game is the uniform limit of solutions of approximating variational inequalities, paving the way to a notion of solution in the viscosity sense. Finally, we notice that our results hold under continuity and (sub)linear growth conditions on the payoff functions. These are much weaker conditions than those needed in \cite{bovo2022variational}, where continuous differentiability in time and space and H\"older continuity of the derivatives is required.

The motivation for considering constrained control directions arises from the literature on (single-agent) irreversible or partially reversible investment problems. In the classical paper \cite{soner1991free}, Soner and Shreve consider a $d$-dimensional Brownian motion whose $d$-th coordinate is singularly controlled. Various works by Zervos et al. (e.g., \cite{merhi2007model,lokka2011model,lokka2013long}), Guo and Tomecek \cite{guo2009class}, Federico et al. (e.g., \cite{federico2014characterization,federico2021singular}, Ferrari (e.g., \cite{ferrari2015integral}), De Angelis et al. (e.g., \cite{de2017optimal,de2015nonconvex,deangelis2019solvable}) consider 2- or 3-dimensional dynamics with only one controlled coordinate. We also notice that in those papers the controlled process $X$ is fully degenerate in the controlled dynamics (i.e., there is no diffusion in the control direction). In all cases but \cite{de2017optimal} and \cite{federico2021singular} this assumption enables an explicit solution of the problem, because the resulting free boundary problems are cast as families of ODEs parametrised by the state variable associated to the control. A non-degenerate example arises instead in mathematical finance in the paper by Bandini et al.\ \cite{bandini2022optimal} who deal with a 2-dimensional diffusive dynamics with only one controlled coordinate. It seems therefore natural that game versions of similar problems should be studied in detail and we provide the first results in this direction.

The literature on controller vs.\ stopper games has been developing in various directions in the case of controls with {\em bounded velocity} (see, e.g., Bensoussan and Friedman \cite{bensoussan1974nonlinear}, Karatzas et al. \cite{karatzas2001controller,karatzas2008martingale}, Hamadene \cite{hamadene2006stochastic}, Bayraktar and Li \cite{bayraktar2019controller}, among others). A more detailed review of the main results in that direction is provided in the introduction of \cite{bovo2022variational}. Instead, the case of singular controls is widely unexplored. Prior to \cite{bovo2022variational}, the only other contribution was by Hernandez-Hernandez et al.\ \cite{hernandez2015zero} (see also \cite{hernandez2015zsgsingular}), who studied the problem in a one-dimensional setting using free boundary problems in the form of ODEs with appropriate boundary conditions. The present paper contributes to the systematic study of zero-sum stopper vs.\ singular controller games while complementing and extending the classical framework with controls of bounded velocity. 

Our paper is organised as follows. In Section \ref{sec:setting} we set up the problem, we explain the main technical difficulties preventing the use of methods from \cite{bovo2022variational}, we state the main result (Theorem \ref{thm:usolvar}) and introduce an approximation scheme. In Section \ref{sec:approx} we obtain stability estimates. Those are later used in Section \ref{sec:valgame} to prove convergence of the value functions of the approximating problems to the original one. A technical appendix completes the paper.


\section{Setting and main results}\label{sec:setting}
Let $(\Omega,\cF,\P)$ be a complete probability space, equipped with a right-continuous filtration $\F = (\cF_s)_{s\in[0,\infty)}$ completed with $\P$-null sets. Let $(W_s)_{s\in[0,\infty)}$ be an $\F$-adapted, $d'$-dimensional Brownian motion. Fix $T\in(0,\infty)$, the horizon of the game. Let $d \le d'$ be the dimension of the controlled diffusion process $(X_s)_{s\in[0,T]}$. We decompose $d$ into two sets of coordinates: $d = d_0 + d_1$ with $d_0, d_1 > 0$. The first $d_0$ coordinates in the controlled dynamics are affected directly by singular controls. The remaining $d_1$ coordinates, instead, are affected indirectly via drift and diffusion coefficients. This is made rigorous in \eqref{eq:prcXcntrll} after we introduce the class of admissible controls.

For $t\in[0,T]$, we denote
\begin{align*}
\cT_t\coloneqq \left\{\tau|\tau\text{ is a stopping time such that}\ \tau\in[0, T-t]\right\}.
\end{align*}
For a vector $x \in \R^d$, $|x|_d$ stands for the Euclidean norm of $x$ and $|x|_{d_0}$ for the Euclidean norm of the first $d_0$ coordinates. We consider the following class of admissible controls
\begin{align*}
 \cA^{d_0}\coloneqq \left\{(n,\nu)\left|
\begin{aligned}
&\text{$(n_s)_{s\in[0,T]}$ is progressively measurable, $\R^{d}$-valued,} \\
&\text{with $n_s=(n^1_s,\ldots, n_s^{d_0},0,0,\ldots, 0)$, $\forall s\in[0,T]$ }\\
&\text{and $|n_s|_{d}=|n_s|_{d_0}=1$, $\P$-a.s.\ $\forall s\in[0,T]$};\\
&\text{$(\nu_s)_{s\in[0,T]}$ is $\mathbb{F}$-adapted, real valued, non-decreasing and}\\
&\,\text{right-continuous with $\nu_{0-}=0$, $\P\text{-a.s.}$, and $\E[(\nu_{T})^2]<\infty$}
\end{aligned}%
\right.\right\}.
\end{align*}
Analogously, we define the class $\cA^d$ with the same properties as the one above but with $n_s=(n^1_s,\ldots, n^d_s)$ such that $|n_s|_d=1$, $\P$-a.s. The class $\cA^d$ is the one used by \cite{bovo2022variational}, where the control may act in all $d$ coordinates. Instead, the class $ \cA^{d_0}$ is the one which we use in the present paper, where {\em the control directions are constrained} to a subspace of $\R^d$. 

Notice that for $\P$-a.e.\ $\omega$, the map $s\mapsto n_s(\omega)$ is Borel-measurable on $[0,T]$ and $s\mapsto \nu_s(\omega)$ defines a measure on $[0,T]$; thus the Lebesgue-Stieltjes integral $\int_{[0,s]}n_u(\omega)\ud \nu_u(\omega)$ is well-defined for $\P$-a.e.\ $\omega$. A jump of the process $\nu$ at time $s$ is denoted by $\Delta \nu_s\coloneqq\nu_s-\nu_{s-}$.

Given a control pair $(n,\nu)\in \cA^{d_0}$ and an initial condition $x\in\R^d$, we consider a $d$-dimensional controlled stochastic dynamics $(X_s^{[n,\nu]})_{s\in[0,T]}$ described by 
\begin{align}\label{eq:prcXcntrll}
\ud X_{s}^{[n,\nu]}=&\, b(X_s^{[n,\nu]})\,\ud s+ \kappa(X_s^{[n,\nu]})\ud W_s+ n_{s}\,\ud\nu_s,\quad X^{[n,\nu]}_{0-}=x,
\end{align}
where $b:\R^d\to\R^d$ and $\kappa:\R^{d}\to \R^{d\times d'}$ are continuous functions and $X^{[n,\nu]}_{0-}$ is the state of the dynamics before a possible jump at time zero. 
We denote 
\[
\P_x\big(\,\cdot\,\big)=\P\big(\,\cdot\,\big|X^{[n,\nu]}_{0-}=x\big)\quad\text{and}\quad\E_x\big[\,\cdot\,\big]=\E\big[\,\cdot\,\big|X^{[n,\nu]}_{0-}=x\big].
\]
It is important to remark that the control acts only in the first $d_0$ coordinates of the dynamics of $X^{[n,\nu]}$. However, the effect of such control is also felt by the remaining $d_1$ coordinates via the drift and diffusion coefficients. Under Assumption \ref{ass:gen1} on $b$ and $\kappa$ (stated below), there is a unique (strong) $\F$-adapted solution of \eqref{eq:prcXcntrll} by, e.g., \cite[Thm.\ 2.5.7]{krylov1980controlled}.

We study a class of 2-player zero-sum games (ZSGs) between a (singular) controller and a stopper. The stopper picks $\tau\in\cT_t$ and the controller chooses a pair $(n,\nu)\in \cA^{d_0}$. At time $\tau$ the game ends and the controller pays to the stopper a random payoff depending on $\tau$ and on the path of $X^{[n,\nu]}$ up to time $\tau$. We denote the state space of the game by
\[
\R^{d+1}_{0,T}\coloneqq[0,T]\times\R^d.
\]
Consider continuous functions $g,h:\R^{d+1}_{0,T}\to [0,\infty)$, $f:[0,T]\to(0,\infty)$, and a fixed discount rate $r\ge 0$ be given. For $(t,x)\in\R^{d+1}_{0,T}$, $\tau \in \cT_t$ and $(n, \nu) \in  \cA^{d_0}$, the game's {\em expected} payoff reads
\begin{align}\label{eq:payoff}
\cJ_{t,x}(n,\nu,\tau)= \E_{x}\Big[e^{-r\tau}\!g(t\!+\!\tau,X_\tau^{[n,\nu]})\!+\!\int_0^{\tau}\!\! e^{-rs}h(t\!+\!s,X_s^{[n,\nu]})\,\ud s\! +\!\int_{[0,\tau]}\!\! e^{-rs}f(t\!+\!s)\,\ud \nu_s \Big].
\end{align}

We define the {\em lower} and {\em upper} value of the game respectively by
\begin{align}\label{eq:lowuppvfnc_1}
\underline{v}(t,x)\coloneqq \adjustlimits\sup_{\tau\in \mathcal{T}_t}\inf_{(n,\nu)\in \cA^{d_0}} \cJ_{t,x}(n,\nu,\tau)\quad\text{and}\quad\overline{v}(t,x)\coloneqq \adjustlimits\inf_{(n,\nu)\in \cA^{d_0}}\sup_{\tau\in \mathcal{T}_t} \cJ_{t,x}(n,\nu,\tau).
\end{align}
Then $\underline{v}(t,x)\leq \overline{v}(t,x)$ and if the equality holds we say that the game \emph{admits a value}: 
\begin{align}\label{eq:valuegame_1}
v(t,x)\coloneqq \underline{v}(t,x)=\overline{v}(t,x).
\end{align}

Before assumptions of the paper are formulated, we introduce necessary notations. Given a matrix $M\in\R^{d\times d'}$, with entries $M_{ij}$, $i=1,\ldots, d$, $j=1,\ldots, d'$, we define its norm by
\begin{align*}
|M|_{d\times d'}\coloneqq \Big(\sum_{i=1}^d\sum_{j=1}^{d'}M_{ij}^2\Big)^{1/2},
\end{align*}
and, if $d=d'$, we let $\mathrm{tr}(M)\coloneqq \sum_{i=1}^d M_{ii}$. For $x\in\R^d$ we use the notation $x=(x_{[d_0]}, x_{[d_1]})$ with $x_{[d_0]}=(x_1,\ldots, x_{d_0})$ and $x_{[d_1]}=(x_{d_0+1},\ldots, x_{d})$. Given a smooth function $\varphi:\R^{d+1}_{0,T}\to \R$ we denote its partial derivatives by 
$\partial_t \varphi$, $\partial_{x_i}\varphi$, $\partial_{x_ix_j}\varphi$, for $i,j=1,\ldots, d$. We write $\nabla \varphi=(\partial_{x_1}\varphi,\ldots, \partial_{x_d} \varphi)$ for the spatial gradient, and $D^2 \varphi = (\partial_{x_i x_j}\varphi)_{i,j=1}^d$ for the spatial Hessian matrix. The first $d_0$ coordinates of the gradient $\nabla \varphi$ are denoted by $\nabla^0 \varphi=(\partial_{x_1}\varphi,\ldots, \partial_{x_{d_0}}\varphi)$ and the remaining $d_1$ coordinates are denoted by $\nabla^{1}\varphi=(\partial_{x_{d_0+1}}\varphi,\ldots, \partial_{x_{d}}\varphi)$.

We now give assumptions under which we obtain our main result, Theorem \ref{thm:usolvar}.
\begin{assumption}[Controlled SDE]\label{ass:gen1}
The functions $b$ and $\kappa$ are such that:
\begin{itemize}
\item[(i)] $b\in C^1(\R^d;\R^d),$ $\kappa\in C^1(\R^d;\R^{d\times d'})$ with derivatives bounded by $D_1>0$;
\item[(ii)] For $i=1,\ldots, d$ and $\kappa^i=(\kappa_{i1},\ldots, \kappa_{id'})$, it holds $\kappa^i(x)=\kappa^i(x_i)$;
\item[(iii)] For any bounded set $B\subset\R^d$ there is $\theta_B>0$ such that 
\begin{align}\label{eq:defnthetaEC_1}
\langle\zeta,\kappa\kappa^\top (x)\zeta\rangle\geq\theta_B|\zeta|_d^2\,\qquad\text{for any $\zeta\in\R^d$ and all $x\in\overline{B}$,}
\end{align}
where $\langle\cdot, \cdot\rangle$ denotes the scalar product in $\R^d$ and $\overline{B}$ the closure of $B$.
\end{itemize}
\end{assumption}
Notice that the Lipschitz continuity of $b$ and $\kappa$ implies that there exists $D_2$ such that
\begin{align}\label{eq:lingrowcff21}
|b(x)|_d+|\kappa(x)|_{d\times d'}\leq D_2(1+|x|_d), \qquad\text{for all $x\in\R^d$.}
\end{align}

\begin{assumption}[Payoffs]\label{ass:gen2}
Functions $f:[0,T]\to (0,\infty)$, $g,h:\R^{d+1}_{0,T}\to [0,\infty)$ are continuous, and:
\begin{itemize}
	\item[(i)] The function $f$ is non-increasing;
	\item[(ii)] There exist constants $K_1\in(0,\infty)$ and $\beta\in[0,1)$ such that
	\[
	0\leq g(t,x)+h(t,x)\leq K_1(1+|x|^\beta_d)\quad\text{for all $(t,x)\in\R^{d+1}_{0,T}$;}
	\]
	\item[(iii)] The function $g$ is Lipschitz in the first $d_0$ spatial coordinates with a constant bounded by $f$ in the sense that for every $t \in [0, T]$, $|\nabla^{0}g(t,x)|_{d_0}\le f(t)$ for a.e.\ $x\in\R^{d}$.
	\end{itemize}
\end{assumption}
Assumption \ref{ass:gen1}(ii) says that the diffusion coefficient of each coordinate of the process depends only on such coordinate. That is needed for $L^1$-stability estimates provided in Sections \ref{sec:stability} and it is satisfied by, e.g., stock market models with stochastic interest rates (cf.\ \cite{cai2022american}). The assumptions on the payoff functions are in line with those in \cite{bovo2022variational}. More precisely, we allow less smoothness than in \cite{bovo2022variational} but we require strictly sub-linear growth instead of quadratic growth as in \cite{bovo2022variational}. Those assumptions are satisfied by a wide class of strictly concave utility functions. Finally, we could allow for $r< 0$ by incorporating the discount factor in functions $f,g,h$ (which are time-dependent).

The next theorem is the main result of the paper. Its proof builds on an approximation procedure that allows us to invoke PDE results from \cite{bovo2022variational}. By passing to the limit in the approximation scheme we recover the value function of our game. Details of the scheme and the convergence are presented in the next sections of the paper.
\begin{theorem}\label{thm:usolvar}
Under Assumptions \ref{ass:gen1} and \ref{ass:gen2}, the game described above admits a value $v$ (i.e., \eqref{eq:valuegame_1} holds) with the following properties:
\begin{itemize}
\item[ (i)] $v$ is continuous on $\R^{d+1}_{0,T}$; 
\item[ (ii)] $|v(t,x)|\le c(1+|x|_d^\beta)$ for some $c>0$, and $\beta$ from Assumption \ref{ass:gen2}(ii); 
\item[(iii)] $v$ is Lipschitz continuous in the first $d_0$ spatial variables with constant bounded by $f$ in the sense that $|\nabla^0 v(t,x)|_{d_0}\le f(t)$ for a.e.\ $(t,x)\in\R^{d+1}_{0,T}$. 
\end{itemize}

Moreover, for any given $(t,x)\in\R^{d+1}_{0,T}$ and any $(n,\nu)\in \cA^{d_0}$, the stopping time $\theta_*=\theta_*(t,x;n,\nu)\in\cT_t$
is optimal for the stopper, where $\theta_*=\tau_*\wedge\sigma_*$ and $\P_x$-a.s.
\begin{align}\label{eq:taustar_1}
\begin{split}
&\tau_*=\tau_*(t,x;n,\nu)\coloneqq \inf\big\{s\geq 0\,\big|\, v(t+s,X_s^{[n,\nu]})=g(t+s,X_s^{[n,\nu]})\big\},\\
&\sigma_*=\sigma_*(t,x;n,\nu)\coloneqq \inf\big\{s\geq 0\,\big|\, v(t+s,X_{s-}^{[n,\nu]})=g(t+s,X_{s-}^{[n,\nu]})\big\}.
\end{split}
\end{align}
\end{theorem}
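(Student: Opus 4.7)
The plan is to introduce a one-parameter family of auxiliary games with value functions $v^\gamma$, $\gamma\in(0,1]$, in which the constraint on control directions is relaxed. A natural parametrisation enlarges $\cA^{d_0}$ to an intermediate class $\cA^{\gamma}$ in which the direction vector $n$ is allowed to have nonzero last $d_1$ components of magnitude at most $\gamma$ (equivalently, the last $d_1$ coordinates of a control in $\cA^d$ are scaled by $\gamma$). At $\gamma=1$ this reproduces the setting of \cite{bovo2022variational}, so a direct adaptation of the variational arguments there yields existence of $v^\gamma$ in $W^{1,2,p}_{\ell oc}$ solving an obstacle problem with a gradient constraint of the form $|\nabla^0 v^\gamma|_{d_0}+\gamma|\nabla^1 v^\gamma|_{d_1}\le f$ a.e.

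The second step is to produce a priori bounds uniform in $\gamma$: the sublinear growth $|v^\gamma(t,x)|\le c(1+|x|_d^\beta)$ from Assumption \ref{ass:gen2}(ii); the Lipschitz estimate $|\nabla^0 v^\gamma|_{d_0}\le f$ (immediate from the gradient constraint); and equicontinuity in $(t,x_{[d_1]})$, which follows from the non-degeneracy of the diffusion in the uncontrolled coordinates (Assumption \ref{ass:gen1}(iii)) via standard probabilistic estimates on $X^{[n,\nu]}$. By an Arzel\`a--Ascoli argument, a subsequence of $\{v^\gamma\}$ converges locally uniformly to a continuous function $v$ satisfying properties (i)--(iii). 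Uniqueness of the limit, and hence convergence of the full family, will follow once $v$ is identified with the game's value.

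The core step is to match $v$ with both the lower and upper values of the constrained game. The inequality $\overline v\le v$ follows because every $(n,\nu)\in\cA^{d_0}$ is admissible in the $\gamma$-game and produces the same payoff, so $v^\gamma\le\overline v$ for every $\gamma$, and the inequality passes to the limit. The harder direction $v\le\underline v$ requires showing that an $\eps$-optimal controller strategy $(n^\gamma,\nu^\gamma)\in\cA^{\gamma}$ for the $\gamma$-game can be approximated, in payoff, by a strategy $(n,\nu)\in\cA^{d_0}$ built from its $d_0$-truncation. This is where the $L^1$-stability estimates of Section \ref{sec:approx} are essential: they should bound $\E_x[\sup_{s\le T}|X_s^{[n^\gamma,\nu^\gamma]}-X_s^{[n,\nu]}|_d]$ in terms of the $\gamma$-weight of $n^\gamma$ on the last $d_1$ coordinates, with constants depending on a priori moment bounds for $\nu^\gamma$, which follow from the strict positivity of $f$ together with an elementary energy estimate on $\cJ$. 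Continuity and sublinear growth of $g,h$ then allow the payoffs to pass to the limit and deliver the desired inequality.

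Finally, for optimality of $\theta_*$, fix $(n,\nu)\in\cA^{d_0}$ and denote by $\tau_*^\gamma,\sigma_*^\gamma,\theta_*^\gamma$ the stopping times defined as in \eqref{eq:taustar_1} but with $v$ replaced by $v^\gamma$. For each $\gamma>0$, $\theta_*^\gamma$ is optimal in the $\gamma$-game by the strong-solution theory of \cite{bovo2022variational}, so $\cJ_{t,x}(n,\nu,\theta_*^\gamma)\ge v^\gamma(t,x)$. I would then prove $\theta_*^\gamma\to\theta_*$ $\P_x$-almost surely as $\gamma\downarrow 0$: the uniform convergence $v^\gamma\to v$ combined with the right-continuity with left limits of $s\mapsto X_s^{[n,\nu]}$ gives $\limsup\tau_*^\gamma\le\tau_*$ and $\liminf\sigma_*^\gamma\ge\sigma_*$, the pair $\tau_*\wedge\sigma_*$ being needed precisely to handle jump times of $\nu$. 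Dominated convergence (justified by sublinear growth and $\E_x[\nu_T^2]<\infty$) then yields $\cJ_{t,x}(n,\nu,\theta_*)\ge v(t,x)$, proving optimality. The main obstacles I anticipate are the $L^1$-stability estimates, which must use local times to handle the singular measure $\nu$ and the degenerate-in-direction nature of the $\gamma$-perturbation, and the almost sure convergence of $\theta_*^\gamma$, which demands careful control of the contact set $\{v=g\}$ in the limit.
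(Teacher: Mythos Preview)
Your overall strategy matches the paper's: a $\gamma$-relaxation reducing to the setting of \cite{bovo2022variational}, $L^1$-stability estimates to pass from $\cA^\gamma$-controls to $\cA^{d_0}$-controls, and convergence of optimal stopping times for the optimality of $\theta_*$. However, two steps in your outline are genuinely wrong or incomplete.

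\textbf{The two matching inequalities are assigned backwards.} From the inclusion $\cA^{d_0}\subset\cA^\gamma$ (and same payoff for controls in $\cA^{d_0}$) you correctly deduce $v^\gamma\le\overline v$, but passing to the limit this yields $v\le\overline v$, not $\overline v\le v$ as you claim. Conversely, your ``harder direction'' argument---truncating an $\eta$-optimal $(n^\gamma,\nu^\gamma)\in\cA^\gamma$ to $(\bar n,\bar\nu)\in\cA^{d_0}$ and comparing payoffs via stability---actually delivers $\overline v\le\liminf_\gamma v^\gamma$, i.e.\ $\overline v\le v$, not $v\le\underline v$. The paper (Theorem~\ref{thm:valgame_1}) establishes precisely $\liminf_\gamma u^\gamma\ge\overline v$ via stability and $\limsup_\gamma u^\gamma\le\underline v$ via the inclusion (plus $f^\gamma-f\le K\sqrt\gamma$). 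Both arguments are needed and you have them, but the labels are swapped, which suggests the logic should be re-checked.

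\textbf{The stopping-time convergence claim is too strong and the passage to the limit in the payoff is not justified.} You assert $\theta_*^\gamma\to\theta_*$ a.s.\ via ``$\limsup\tau_*^\gamma\le\tau_*$ and $\liminf\sigma_*^\gamma\ge\sigma_*$''. Even granting these two one-sided bounds, they do not combine to yield convergence of the minimum. More importantly, $\limsup\tau_*^\gamma\le\tau_*$ requires $v^\gamma=g$ at (or before) the first contact time of $v$ with $g$, and this fails in general when $v^\gamma>v$---which does occur in the paper's setup because $f^\gamma>f$. The paper explicitly notes (Remark~\ref{rem:theta}) that one cannot expect convergence of $\tau_*^\gamma$; instead it proves only $\liminf_\gamma\theta_*^\gamma\ge\theta_*$ (Lemma~\ref{lem:convth}) and then works with $\theta_*^\gamma\wedge\theta_*\uparrow\theta_*$. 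The subsequent limit is taken not in $\cJ_{t,x}(n,\nu,\theta_*^\gamma)$ directly---that functional is discontinuous in $\tau$ at jump times of $\nu$---but in an inequality of the form \eqref{eq:convthetagam} involving $v\big(t+\theta_*^\gamma\wedge\theta_*,X^{[n,\nu]}_{\theta_*^\gamma\wedge\theta_*-}\big)$ and $\int_{[0,\theta_*^\gamma\wedge\theta_*)}f\,\ud\nu$, exploiting \emph{left}-continuity of $s\mapsto X_{s-}^{[n,\nu]}$ and of $s\mapsto\int_{[0,s)}f\,\ud\nu$. The final jump at $\theta_*$ is then handled separately via the gradient bounds $|\nabla^0 v|_{d_0}\le f$ and $|\nabla^0 g|_{d_0}\le f$ on the two events $\{\tau_*\le\sigma_*\}$ and $\{\sigma_*<\tau_*\}$; this is exactly why $\sigma_*$ appears in the definition of $\theta_*$. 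Your proposed dominated-convergence step on $\cJ_{t,x}(n,\nu,\theta_*^\gamma)$ does not address either the one-sided nature of the convergence or the discontinuity of the payoff.

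Two smaller points. First, the paper modifies $f$ to $f^\gamma(t)=\sqrt{f(t)^2+\gamma K^2}$ precisely so that $|\nabla g|_\gamma\le f^\gamma$; without this the PDE machinery from \cite{bovo2022variational} does not apply, and your constraint $|\nabla^0 v^\gamma|_{d_0}+\gamma|\nabla^1 v^\gamma|_{d_1}\le f$ is not the one that arises naturally (it should be $|\nabla v^\gamma|_\gamma\le f^\gamma$ with $|p|_\gamma^2=|p_{[d_0]}|^2+\gamma|p_{[d_1]}|^2$). Second, the paper obtains an explicit rate $|u^\gamma-v|\le C(1+|x|_d)\gamma^{1/2}$ directly from the stability estimate, bypassing Arzel\`a--Ascoli; your equicontinuity-in-$(t,x_{[d_1]})$ claim would need a separate argument that the paper does not provide and that is not obviously available uniformly in $\gamma$.
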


\begin{remark}
The set $\{s\geq0\,|\,v(t+s,X_s^{[n,\nu]})=g(t+s,X_s^{[n,\nu]})\big\}$ always contains $T-t$, because $v(T, x) = g(T,x)$. Hence, $\theta_*\le T-t$, $\P_x$-a.s.
\end{remark}

\begin{remark}\label{rem:opt}
The stopper's strategy $\theta_*$ is of a closed-loop type, i.e., the stopping time $\theta_*$ depends on the dynamics of the underlying process $X^{[n, \nu]}$. Optimality of $\theta_*$, asserted above, should be understood in the sense that for any admissible control $(n,\nu)\in \cA^{d_0}$, we have
\[
v(t,x) \le \cJ_{t,x}(n,\nu, \theta_*), \qquad (t,x) \in \R^{d+1}_{0,T}.
\]
In particular this implies $v(t,x)=\inf_{(n,\nu)\in\cA^{d_0}}\cJ_{t,x}(n,\nu,\theta_*(t,x; n,\nu))$, but $\theta_*$ may not be a best response for any specific pair $(n,\nu)$. Existence of an optimal control remains an open question (cf.\ \cite[Rem.\ 3.5]{bovo2022variational}).
\end{remark}

\begin{remark}\label{rem:dd0}
The results in the theorem above continue to hold in the unconstrained case $d_0=d$. That proves existence of a value under less stringent regularity conditions on $g,h$ than in \cite{bovo2022variational} and when $f$ is independent of the spatial coordinate. Notice that for $d=d_0$ the approximation via functions $(u^\gamma)_{\gamma>0}$ described in Section \ref{sec:penprobg} is not needed. The rest of the analysis follows the same steps as in Section \ref{sec:valgame} taking $\gamma=1$ and ignoring the arguments about the limit as $\gamma\to 0$ in Section \ref{sec:gammato0}.
\end{remark}

\begin{remark}\label{rem:theta}
The stopping time $\tau_*$ is shown to be optimal for the game studied in \cite{bovo2022variational}. Theorem \ref{thm:usolvar} asserts the optimality of $\theta_*$ which is the minimum of $\tau_*$ and another stopping time $\sigma_*$. This construction comes at no disadvantage as $\theta_*$ is also optimal in the setting of \cite{bovo2022variational} (see Lemma \ref{lem:convst}). It however enables us to prove convergence of optimal stopping times in the form of $\theta_*$ for games with value functions converging uniformly on compacts (see Lemma \ref{lem:convth} and the proof of Theorem \ref{thm:opttaustar_1}). Note that one cannot expect such convergence to hold for $\tau_*$.
\end{remark}

\subsection{Approximation procedure}\label{sec:penprobg}
The key step for the proof of Theorem \ref{thm:usolvar} is based on an approximation scheme that we present here. Fix $\gamma\in(0,1]$
. Given $(n,\nu)\in\cA^d$, we consider the controlled SDE
\begin{align}\label{eq:SDEgam}
\ud X_{s}^{[n,\nu],\gamma}=&\, b(X_s^{[n,\nu],\gamma})\,\ud s+ \kappa(X_s^{[n,\nu],\gamma})\ud W_s+ n^\gamma_{s}\,\ud\nu_s,
\end{align}
where $n^\gamma_s\coloneqq(n^1_s,\ldots,n^{d_0}_s,\gamma n^{d_0+1}_s,\ldots,\gamma n^{d}_s)$ (i.e., the parameter $\gamma$ acts as a weight on the last $d_1$ coordinates of $n_s$).

Given vectors $p,q\in \R^{d}$, recalling the notation $p=(p_{[d_0]},p_{[d_1]})\in\R^{d_0}\times \R^{d_1}$ and the scalar product $\langle p,q\rangle$ in $\R^d$, we introduce the bilinear form $\langle\cdot,\cdot\rangle_{\gamma}:\R^d\times\R^d\to \R$ defined as $\langle p,q\rangle_\gamma \coloneqq \langle p_{[d_0]},q_{[d_0]}\rangle+\gamma\langle p_{[d_1]},q_{[d_1]}\rangle$.
Notice that we are slightly abusing the notation because $\langle p_{[d_0]},q_{[d_0]}\rangle$ and $\langle p_{[d_1]},q_{[d_1]}\rangle$ are scalar products in $\R^{d_0}$ and $\R^{d_1}$, respectively.
Associated with $\langle\cdot,\cdot\rangle_\gamma$ we have the norm 
$|p|_\gamma \coloneqq \sqrt{\langle p,p\rangle_\gamma}$ on $\R^d$.
It is worth noticing that 
$\nabla |p|^2_\gamma=2 (p_1,\ldots, p_{d_0},\gamma\, p_{d_0+1},\ldots, \gamma\, p_d)$
and, for $j=1,\ldots, d$, we clearly have $\big(D^2|p|^2_\gamma\big)_{ij}=2\delta_{ij}$ for $i=1,\ldots, d_0$ and $\big(D^2|p|^2_\gamma\big)_{ij}=2\gamma\delta_{ij}$ for $i=d_0+1,\ldots, d$, where $\delta_{ij}$ is the Kronecker delta. 

We introduce an approximation of $f$ as
\begin{align}\label{eq:defnfgamma}
f^\gamma(t) \coloneqq \sqrt{f^2(t)+\gamma K^2}\quad\text{for $t\in[0,T]$},
\end{align}
where $K$ is a suitable constant that we choose later on (the same as in \eqref{eq:ghlipL_1}). By construction $f^\gamma\to f$ uniformly on $[0,T]$ as $\gamma\to0$. We consider a new payoff
\begin{align*}
\cJ_{t,x}^\gamma(n,\nu,\tau) &\coloneqq \E_{x}\Big[e^{-r\tau} g(t\!+\!\tau,X_\tau^{[n,\nu],\gamma}) + \int_0^{\tau}\! e^{-rs}h(t\!+\!s,X_s^{[n,\nu],\gamma}) \ud s\\
&\hspace{30pt}+ \int_{[0,\tau]}\!\!\! e^{-rs}f^\gamma(t\!+\!s)\ud \nu_s\! \Big].
\end{align*}
Upper and lower value for the game with the expected payoff $\cJ^\gamma_{t,x}$ are given by
\begin{align*}
\underline u^\gamma(t,x)= \sup_{\tau\in\cT_t}\inf_{(n,\nu)\in\cA^d}\cJ_{t,x}^\gamma(n,\nu,\tau)\quad\text{and}\quad \overline u^\gamma(t,x)= \!\inf_{(n,\nu)\in\cA^d}\sup_{\tau\in\cT_t}\cJ_{t,x}^\gamma(n,\nu,\tau),
\end{align*}
and we say that the value exists if $u^\gamma\coloneqq\underline u^\gamma=\overline u^\gamma$. We formally set $\overline u^0=\overline v$ and $\underline u^0=\underline v$.

Using results from \cite{bovo2022variational} we will show that $u^\gamma$ is well-defined, i.e., the approximating game admits a value, for every $\gamma\in(0,1]$. Then we obtain $\lim_{\gamma \to 0}u^\gamma=\overline v$ and $\lim_{\gamma \to 0}u^\gamma=\underline v$ uniformly on compacts, thus proving existence of a value for our constrained game.

\subsection{Challenges in the constrained setup}\label{sec:challenge}
The theory developed in \cite{bovo2022variational} does not cover the game we are considering here for two essential reasons. The first one is that the functions $f,g,h$ are only assumed to be continuous, whereas \cite{bovo2022variational} requires continuous differentiability once in time and twice in space (and H\"older continuity of all derivatives). The second one, and more important, is that the constraints on the directions of the admissible control imply that estimates obtained in \cite{bovo2022variational} via analytical arguments can no longer be obtained. In the next paragraphs we briefly elaborate on this fine technical issue.

The variational problem in \cite{bovo2022variational} features a gradient constraint on the value function $v$ of the form $|\nabla v|_d\le f$. In the penalisation procedure adopted in \cite{bovo2022variational} we therefore consider a semi-linear PDE with a non-linear term of the form $\R^d\ni p\mapsto\psi_\eps(|p|_d^2-f^2)$ (see Eq. (5.14) in \cite{bovo2022variational}), where $\eps>0$ is a parameter that must tend to zero in the limit of the penalisation step. In our current setup, given that the control only acts in the first $d_0$ coordinates, the gradient constraint must be of the form $|\nabla^0 v|_{d_0}\le f$. That translates into a non-linear term of the form $\R^d\ni p\mapsto\psi_\eps(|p_{[d_0]}|_{d_0}^2-f^2)$ in the associated penalised problem.
One of the key estimates in \cite{bovo2022variational} is obtained in \cite[Prop.\ 4.9]{bovo2022variational} and it concerns a bound on the gradient of the solution of the penalised problem. The method of proof adopted in \cite[Prop.\ 4.9]{bovo2022variational} is also used in other places, e.g., in \cite[Prop.\ 5.1]{bovo2022variational}. We now show where those arguments fail. 

Arguing as in the proof of \cite[Prop.\ 4.9]{bovo2022variational}, we obtain an analogue of \cite[Eq.\ \lbracket 4.38)]{bovo2022variational} and it reads:
\begin{align*}
-2\langle \nabla w^n,\nabla (|\nabla^{0} u^n|^2_{d_0}- f_m^2)\rangle\leq -2\lambda |\nabla^0 u^{\eps,\delta}|_{d_0}^2+\tilde{R}_n.
\end{align*}
Above, it is enough to understand that $f_m$ is an approximation of $f$, while $w^n$ and $u^n$ both approximate the solution $u^{\eps,\delta}$ of the penalised problem.
The term $\tilde{R}_n$ is a remainder which can be made arbitrarily small and it plays no substantial role in this discussion. Continuing with the argument that follows \cite[Eq.~\lbracket 4.38)]{bovo2022variational} we arrive at
\[
\lambda|\nabla^0 u^{\eps,\delta}|^2_{d_0}\le \alpha_1 |\nabla u^{\eps,\delta}|^2_{d}+\alpha_2,
\]
where $\alpha_1,\alpha_2>0$ are given constants and $\lambda>0$ can be chosen arbitrarily. From this estimate we cannot conclude that $|\nabla u^{\eps,\delta}|$ is bounded. Instead of $\lambda|\nabla^0 u^{\eps,\delta}|^2_{d_0}$, in \cite{bovo2022variational} we have $\lambda|\nabla u^{\eps,\delta}|^2_{d}$, which leads to $\lambda|\nabla u^{\eps,\delta}|^2_{d}\le \alpha_1 |\nabla u^{\eps,\delta}|^2_{d}+\alpha_2$ and it allows to conclude $|\nabla u^{\eps,\delta}|^2_{d}\le c$ for some constant $c>0$, by the arbitrariness of $\lambda$.

Other difficulties of a similar nature appear in, e.g., adapting the arguments of \cite[Lem.\ 5.8]{bovo2022variational}, where in Eq.~(5.34) we would not be able to obtain a bound on $|D^2w^n|_{d\times d}^2$, because we cannot control the derivatives $\partial_{x_ix_j}w^n$ for $i,j=d_0+1,\ldots, d$. We avoid going into further detail and refer the interested reader to the original paper for a careful comparison. 

\subsection{Notation}\label{sec:pre}
Before passing to the proof of Theorem \ref{thm:usolvar}, we introduce the remaining notation used in the paper.

The $d$-dimensional {\em open} ball centred in $0$ with radius $m$ is denoted by $B_m$. For an arbitrary subset $\cO\subseteq \R^{d+1}_{0,T}$ we let $C^\infty_{c,\, \rm sp}(\cO)$ be the space of functions on $\cO$ {\em with compact support in the spatial coordinates} (not in time) and infinitely many continuous derivatives. For an open bounded set $\cO\subset\R^{d+1}_{0,T}$, we denote by $\overline\cO$ the closure of $\cO$ and we let $C^0(\overline{\cO})$ be the space of continuous functions $\varphi:\overline\cO\to\R$ equipped with the supremum norm
\begin{align}\label{eq:supremum}
\|\varphi\|_{C^0(\overline\cO)}\coloneqq \sup_{(t,x)\in\overline \cO}|\varphi(t,x)|.
\end{align}
Analogously, $C^0(\R^{d+1}_{0,T})$ is the space of bounded and continuous functions $\varphi:\R^{d+1}_{0,T}\to\R$ equipped with the norm $\|\varphi\|_\infty\coloneqq \|\varphi\|_{C^0(\R^{d+1}_{0,T})}$.
We denote by $C^{0,1,\alpha}(\overline\cO)$ the space of $\alpha$-H\"older continuous functions on $\overline \cO$ with $\alpha$-H\"older continuous spatial gradient, equipped with the supremum norm and the $\alpha$-H\"older semi-norm. The semi-norm is evaluated with respect to the parabolic distance; for details see \cite[Ch.\ 3, Sec.\ 2]{friedman2008partial} (see also the notation section in \cite{bovo2022variational}). The space of functions with bounded $C^{0,1,\alpha}$-norm in any compact subset of $\R^{d+1}_{0,T}$ is denoted by $C^{0,1,\alpha}_{\ell oc}(\R^{d+1}_{0,T})$. For $p\in[1,\infty)$, we recall the definition of the usual Sobolev space (see \cite[Sec.\ 2.2]{krylov2008lectures}):
\begin{align}\label{eq:W12p}
W^{1,2,p}_{\ell oc}(\R^{d+1}_{0,T})&\,\coloneqq \big\{ f\in L^p_{\ell oc}(\R^{d+1}_{0,T}) \,\big|\,f\in W^{1,2,p}(\cO), \,\forall \cO\subseteq \R^{d+1}_{0,T}, \cO\text{ bounded}\big\}.
\end{align}

For $e_1=(1,0,\ldots,0)\in\R^d$ and $a(x) \coloneqq (\kappa\kappa^\top)(x)$, the infinitesimal generator of the uncontrolled process $X^{[e_1,0]}$ is denoted by $\mathcal{L}$ and it reads
\begin{align*}
(\mathcal{L}\varphi)(t,x)=\frac{1}{2}\mathrm{tr}\left(a(x)D^2\varphi(t,x)\right)+\langle b(x),\nabla \varphi(t,x)\rangle,
\end{align*} 
for a sufficiently smooth function $\varphi:\R^{d+1}_{0,T}\to\R$.

\section{Properties of the approximating problems and stability estimates}\label{sec:approx}
In this section we study the game described in Section \ref{sec:penprobg}. Our first lemma shows that we can restrict the class of admissible controls to those with bounded expectation uniformly in $x$ in compact sets. In the proof we use $\inf_{t\in[0,T]}f(t) = f(T) >0$ from Assumption \ref{ass:gen2}. Recall that we formally set $\overline u^0=\overline v$ and $\underline u^0=\underline v$.
\begin{lemma}\label{lem:3.3}
There is $K_2>0$ such that for any $(t,x)\in\R^{d+1}_{0,T}$ and $\gamma\in[0,1]$
\begin{align*}
\overline u^\gamma(t,x)=\!\inf_{(n,\nu)\in\cA_{t,x}^{d,opt}}\sup_{\tau\in\cT_t}\cJ^\gamma_{t,x}(n,\nu,\tau),\quad
\underline u^\gamma(t,x)= \sup_{\tau\in\cT_t}\inf_{(n,\nu)\in\cA_{t,x}^{d,opt}}\cJ^\gamma_{t,x}(n,\nu,\tau),
\end{align*}
where $\cA_{t,x}^{d,opt} \coloneqq \big\{(n,\nu)\in\cA^d\,\big|\, \E_x[\nu_{T-t}]\leq K_2(1+|x|_d)\big\}$. (When $\gamma=0$ we must use $\cA_{t,x}^{d_0,opt}\coloneqq\cA_{t,x}^{d,opt}\cap\cA^{d_0}$ instead of $\cA_{t,x}^{d,opt}$.)
\end{lemma}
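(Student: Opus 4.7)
The argument rests on two bounds on $\cJ^\gamma_{t,x}$ that hold uniformly in $\gamma\in[0,1]$. First, the \emph{null} control $(n,\nu)=(e_1,0)\in\cA^{d_0}\subseteq\cA^d$ makes the $f^\gamma$-integral vanish regardless of $\gamma$, so by the sub-linear growth in Assumption \ref{ass:gen2}(ii) together with standard $L^2$-moment estimates for \eqref{eq:prcXcntrll} derived from \eqref{eq:lingrowcff21},
\[
M(t,x)\coloneqq\sup_{\tau\in\cT_t}\cJ^\gamma_{t,x}(e_1,0,\tau)\le C_0(1+|x|_d),
\]
with $C_0$ independent of $\gamma$ and of $(t,x)$ (the bound $(1+|x|_d^\beta)$ is absorbed since $\beta<1$). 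Second, discarding the nonnegative terms $g,h$ and using $f^\gamma\ge f$, the monotonicity $f\ge f(T)>0$ from Assumption \ref{ass:gen2}(i), and $e^{-rs}\ge e^{-rT}$ on $[0,T]$, one obtains
\[
\cJ^\gamma_{t,x}(n,\nu,\tau)\ge e^{-rT}f(T)\,\E_x[\nu_\tau]\qquad\forall\,(n,\nu)\in\cA^d,\ \tau\in\cT_t.
\]
Set $K_2\coloneqq(C_0+1)/(e^{-rT}f(T))$; this will be the constant of the lemma.

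For the \emph{upper value}, pick $\eps\in(0,1)$ and an $\eps$-optimizer $(n,\nu)\in\cA^d$, so $\sup_\tau\cJ^\gamma_{t,x}(n,\nu,\tau)\le\overline u^\gamma(t,x)+\eps\le M(t,x)+\eps$. If $(n,\nu)\notin\cA_{t,x}^{d,opt}$, i.e.\ $\E_x[\nu_{T-t}]>K_2(1+|x|_d)$, then evaluating the sup at the constant stopping time $\tau=T-t$ and applying the second bound yields $\sup_\tau\cJ^\gamma_{t,x}(n,\nu,\tau)>(C_0+1)(1+|x|_d)\ge M(t,x)+1$, contradicting $\eps<1$. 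Hence every $\eps$-optimizer lies in $\cA_{t,x}^{d,opt}$; letting $\eps\downarrow0$ and noting that $(e_1,0)\in\cA_{t,x}^{d,opt}$ gives $\overline u^\gamma(t,x)=\inf_{\cA_{t,x}^{d,opt}}\sup_\tau\cJ^\gamma_{t,x}$.

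The \emph{lower value} requires an additional \emph{stopping trick}, since the second bound only controls $\E_x[\nu_\tau]$ rather than $\E_x[\nu_{T-t}]$. Fix $\tau\in\cT_t$ and take a minimizing sequence $(n^k,\nu^k)\in\cA^d$ for $\inf_{\cA^d}\cJ^\gamma_{t,x}(\cdot,\cdot,\tau)$; since $\cJ^\gamma_{t,x}(e_1,0,\tau)\le M(t,x)$ one may assume $\cJ^\gamma_{t,x}(n^k,\nu^k,\tau)\le M(t,x)+1$ eventually. Define $\tilde\nu^k_s\coloneqq\nu^k_{s\wedge\tau}$: it is adapted (as $\tau$ is a stopping time), non-decreasing, right-continuous with $\tilde\nu^k_{0-}=0$ and $\E_x[(\tilde\nu^k_{T})^2]\le\E_x[(\nu^k_{T})^2]<\infty$, so $(n^k,\tilde\nu^k)\in\cA^d$. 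On $[0,\tau]$ the controlled dynamics and the Lebesgue--Stieltjes measure $\ud\tilde\nu^k$ coincide with those of $\nu^k$, hence $\cJ^\gamma_{t,x}(n^k,\tilde\nu^k,\tau)=\cJ^\gamma_{t,x}(n^k,\nu^k,\tau)$, while
\[
\E_x[\tilde\nu^k_{T-t}]=\E_x[\nu^k_\tau]\le\frac{\cJ^\gamma_{t,x}(n^k,\nu^k,\tau)}{e^{-rT}f(T)}\le\frac{M(t,x)+1}{e^{-rT}f(T)}\le K_2(1+|x|_d),
\]
so $(n^k,\tilde\nu^k)\in\cA_{t,x}^{d,opt}$. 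Passing to the limit $k\to\infty$ and then to $\sup_\tau$ yields the claim for $\underline u^\gamma$. The case $\gamma=0$ is identical with $\cA^{d_0}$ and $\cA_{t,x}^{d_0,opt}$ in place of $\cA^d$ and $\cA_{t,x}^{d,opt}$, since $(e_1,0)\in\cA^{d_0}$. The main technical point is the stopping trick: verifying admissibility of $(n^k,\tilde\nu^k)$ and the invariance of both the SDE and the cost integrals on $[0,\tau]$ under the modification $\nu^k\mapsto\tilde\nu^k$.
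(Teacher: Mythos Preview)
Your proof is correct and follows the same two-ingredient strategy as the paper: bound the upper value from above via the null control, and bound the cost of control from below via $f^\gamma\ge f\ge f(T)>0$. For the upper value your argument is essentially identical to the paper's, which introduces an intermediate class $\cA^{d,sub}_{t,x}\coloneqq\{(n,\nu):\sup_\tau\cJ^\gamma_{t,x}(n,\nu,\tau)\le C_1(1+|x|_d)\}$ and then shows $\cA^{d,sub}_{t,x}\subseteq\cA^{d,opt}_{t,x}$ by evaluating the sup at $\tau=T-t$; your $\eps$-optimizer argument is the same idea without naming the intermediate class.

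The genuine difference is in the lower value. The paper asserts in one line that $\inf_{\cA^d}\cJ^\gamma_{t,x}(\cdot,\cdot,\tau)=\inf_{\cA^{d,sub}_{t,x}}\cJ^\gamma_{t,x}(\cdot,\cdot,\tau)$ for each fixed $\tau$, but the definition of $\cA^{d,sub}_{t,x}$ involves $\sup_\tau$, so a control with small $\cJ^\gamma_{t,x}(n,\nu,\tau)$ for the given $\tau$ need not lie in $\cA^{d,sub}_{t,x}$ without further argument. Your stopping trick---replacing $\nu$ by $\tilde\nu_s=\nu_{s\wedge\tau}$, which leaves the dynamics and the payoff unchanged on $[0,\tau]$ while forcing $\E_x[\tilde\nu_{T-t}]=\E_x[\nu_\tau]$---is exactly the construction needed to make this step rigorous, and your verification of admissibility and invariance of the cost is correct. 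So your treatment of the lower value is more explicit than the paper's; conversely, the paper's route via $\cA^{d,sub}_{t,x}$ has the minor advantage of handling both values in parallel once the intermediate class is (tacitly) justified.
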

\begin{proof}
Let $(e_1,0)\in\cA^d$ be the null control, where $e_1=(1,0,\ldots, 0)\in\R^d$, and denote $X=X^{[e_1,0]}$ (notice that $X^{[e_1,0]}=X^{[e_1,0],\gamma}$ for all $\gamma\in[0,1]$). We have
\begin{align}\label{eq:estpayoffnu_1}
\begin{aligned}
\overline u^\gamma(t,x)\leq&\, \sup_{\tau\in\cT_t}\cJ^{\gamma}_{t,x}(e_1,0,\tau)
=\sup_{\tau\in\cT_t}\E_x\Big[e^{-r\tau}g(t\!+\!\tau,X_\tau)\!+\!\int_0^\tau\!\! e^{-rs}h(t\!+\!s,X_s)\ud s\Big]\\
\leq&\, K_1(1+T)\Big(1+\E_x\Big[\sup_{s\in[0,T]}|X_s|^2_d\Big]^{1/2}\Big)\leq C_1(1+|x|_d),
\end{aligned}
\end{align}
where the second inequality uses the sub-linear growth of $g$ and $h$, the third inequality is by standard estimates for SDEs with linearly growing coefficients (\cite[Cor.\ 2.5.10]{krylov1980controlled}). The constant $C_1>0$ depends only on $T$, $D_2$ and $K_1$ from \eqref{eq:lingrowcff21} and Assumption \ref{ass:gen2}(ii), respectively. Since $0 \le \overline u^\gamma$,  by \eqref{eq:estpayoffnu_1} we can restrict admissible controls in $\overline u^\gamma$ to the class $\cA^{d,sub}_{t,x} \coloneqq \{(n,\nu)\in\cA^d\,|\, \sup_{\tau\in\cT_t}\cJ^\gamma_{t,x}(n,\nu,\tau)\leq C_1(1+|x|_d)\}$.

A similar argument applies for the lower value $\underline u^\gamma$. As in \eqref{eq:estpayoffnu_1}, 
\[
\inf_{(n,\nu)\in\cA^{d}}\cJ^\gamma_{t,x}(n,\nu,\tau)
= \inf_{(n,\nu)\in\cA_{t,x}^{d,sub}}\cJ^\gamma_{t,x}(n,\nu,\tau),
\]
for any $(t, x)$ and $\tau$. So we can also restrict controls to $\cA_{t,x}^{d,sub}$ in the definition of $\underline{u}^\gamma$.

It remains to show that $\cA^{d, sub}_{t,x} \subseteq \cA^{d, opt}_{t,x}$. To this end, recall that $f>0$.
For $(n,\nu)\in\cA^{d,sub}_{t,x}$ we have
\begin{align}\label{eq:stmoptcont_1}
\begin{aligned}
&\E_x\big[\nu_{T-t}\big]\leq e^{r(T-t)}\Big(\min_{s\in[0,T-t]}f(t+s)\Big)^{-1}\E_x\Big[\int_{[0,T-t]}\!e^{-rs}f(t+s)\,\ud \nu_{s}\Big]\\
& \le \frac{e^{r(T-t)}}{f(T)} \E_x\Big[\int_{[0,T-t]}\!e^{-rs}f^\gamma(t+s)\,\ud \nu_{s}\Big] 
\leq \frac{e^{rT}}{f(T)}\cJ^\gamma_{t,x}(n,\nu,T-t),
\end{aligned}
\end{align}
where the second inequality uses that $f\le f^\gamma$ and $f$ is non-increasing in time and the third inequality follows from $g,h\ge 0$. Using \eqref{eq:estpayoffnu_1} and \eqref{eq:stmoptcont_1}, we have $\E_x[\nu_{T-t}]\le \frac{e^{rT}}{f(T)}\overline u^\gamma(t,x)\le K_2(1+|x|_d)$, with $K_2=\frac{e^{rT}C_1}{f(T)}$.
This concludes the proof because $\cA^{d, sub}_{t,x} \subseteq \cA^{d, opt}_{t,x} \subseteq \cA^{d}$ and in the first part of the proof we have shown that $\cA^{d}$ can be replaced by $\cA^{d, sub}_{t,x}$ in the definitions of $\overline{u}^\gamma$ and $\underline{u}^\gamma$.
\end{proof}

From now on we assume stronger conditions than in Assumption \ref{ass:gen2} for the sake of simplicity of exposition. These will be relaxed in Section \ref{sec:relax41}. In particular, throughout this section we enforce
\begin{assumption}\label{ass:gen3}
Functions $f:[0,T]\to(0,\infty)$, $g,h:\R^{d+1}_{0,T}\to[0,\infty)$ satisfy:
\begin{itemize} 
\item[(i)] $g \in C^{\infty}_{c,\,\rm sp}(\R^{d+1}_{0,T})$ and $h\in C^{\infty}_{c,\,\rm sp}(\R^{d+1}_{0,T})$; 
\item[(ii)] $f\in C^\infty([0,T])$, non-increasing and strictly positive;
\item[(iii)] For all $(t,x)\in\R^{d+1}_{0,T}$, it holds
\begin{align*}
|\nabla^{0} g(t,x)|_{d_0}\leq f(t).
\end{align*}
\end{itemize}
\end{assumption}
We notice that the assumptions of infinite continuous differentiability and compact support imply the existence of a constant $K\in(0,\infty)$ such that:
\begin{itemize}
\item[(iv)] $f$, $g$ and $h$ are bounded and, for all $0\leq s< t\leq T$ and all $x,y\in\R^{d}$,
\begin{align}\label{eq:ghlipL_1}
|g(t,x)-g(s,y)|+|h(t,x)-h(s,y)|\le K\big(|x-y|_d+(t-s)\big);
\end{align}
\item[(v)] For all $(t,x)\in\R^{d+1}_{0,T}$, it holds
\begin{align*}
(h+\partial_tg+\mathcal{L}g-rg)(t,x)\ge -K.
\end{align*}
\end{itemize}

In the construction of $f^\gamma$ in \eqref{eq:defnfgamma} we take $K>0$ as in (iv) and (v) above. Then, Assumption \ref{ass:gen3}(iii) and \eqref{eq:ghlipL_1} imply for all $(t,x)\in\R^{d+1}_{0,T}$ 
\begin{align*}
|\nabla g(t,x)|_{\gamma}^2=&\,|\nabla^0 g(t,x)|_{d_0}^2+\gamma |\nabla^1 g(t,x)|_{d_1}^2\leq f^2(t)+\gamma K^2=(f^\gamma(t))^2.
\end{align*}
For the game in Section \ref{sec:penprobg} (with expected payoff $\cJ^\gamma_{t,x}$) our Assumption \ref{ass:gen3} implies Assumption 3.2 in \cite{bovo2022variational}, and our Assumption \ref{ass:gen1} implies Assumption 3.1 in \cite{bovo2022variational}. The variational inequality that identifies the value of such game is the following:
\begin{align}\label{eq:VI}
\begin{split}
&\min\big\{\max\big\{\partial_t u+\cL u-ru+h,g-u\big\},f^\gamma-|\nabla u|_\gamma\big\}=0,\quad\text{a.e.\ in $\R^{d+1}_{0,T}$},\\
&\max\big\{\min\big\{\partial_t u+\cL u-ru+h,f^\gamma-|\nabla u|_\gamma\big\},g-u\big\}=0,\quad\text{a.e.\ in $\R^{d+1}_{0,T}$},
\end{split}
\end{align}
with terminal condition $u(T,x)=g(T,x)$ and growth condition $|u(t,x)|\le c(1+|x|_d)$, for a suitable $c>0$. A simple adaptation of the results from \cite{bovo2022variational} leads to the next theorem. Details of the changes to the original proof of \cite[Thm.\ 3.3]{bovo2022variational} are given in Appendix for completeness.
\begin{theorem}\label{thm:usolvar_ga}
The game described above admits a value (i.e., $\underline u^\gamma=\overline u^\gamma$) and the value function $u^\gamma$ is the maximal solution\footnote{Maximal means that if $w\in W^{1,2,p}_{\ell oc}(\R^{d+1}_{0,T})$, for all $p\in[1,\infty)$, is another solution of \eqref{eq:VI}, then $u^\gamma(t,x)\ge w(t,x)$ for all $(t,x)\in\R^{d+1}_{0,T}$.} of \eqref{eq:VI} in the class $W^{1,2,p}_{\ell oc}(\R^{d+1}_{0,T})$ for all $p\in[1,\infty)$. Moreover, for any given $(t,x)\in\R^{d+1}_{0,T}$ and any admissible control $(n,\nu)\in\cA^{d}$, the stopping time defined $\P_x$-a.s.\ as
\begin{align}\label{eq:taustar_ga}
\tau_*^\gamma\coloneqq\inf\big\{s\geq0\,|\,u^\gamma(t+s,X_s^{[n,\nu],\gamma})=g(t+s,X_s^{[n,\nu],\gamma})\big\}
\end{align}
is optimal for the stopper.
\end{theorem}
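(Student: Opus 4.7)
The plan is to show that Theorem \ref{thm:usolvar_ga} follows essentially from \cite[Thm.\ 3.3]{bovo2022variational} after verifying that the game with payoff $\cJ^\gamma_{t,x}$ fits that framework once the Euclidean norm $|\cdot|_d$ on the control direction and gradient constraint is replaced by the weighted norm $|\cdot|_\gamma$, and $f$ by $f^\gamma$. The key observation justifying this reduction is that, since $\gamma>0$, the norm $|\cdot|_\gamma$ is equivalent to $|\cdot|_d$ (with constants depending on $\gamma$), so the quantitative estimates of \cite{bovo2022variational} carry over with only cosmetic changes in the constants. The Appendix is then devoted to transcribing these changes into the proofs of the relevant PDE lemmas.

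My first step would be to verify that our Assumptions \ref{ass:gen1} and \ref{ass:gen3} translate into Assumptions 3.1 and 3.2 of \cite{bovo2022variational} for the game with payoff $\cJ^\gamma_{t,x}$. The SDE side is immediate because $b,\kappa$ are unchanged. For the payoff side the only non-trivial point is the gradient bound on the terminal payoff: the framework of \cite{bovo2022variational} requires $|\nabla g|_\gamma\le f^\gamma$ pointwise. This is verified at once from Assumption \ref{ass:gen3}(iii), the bound $|\nabla^1 g|_{d_1}\le K$ implied by \eqref{eq:ghlipL_1}, and the definition \eqref{eq:defnfgamma} of $f^\gamma$ via
\[
|\nabla g|_\gamma^2 \;=\; |\nabla^0 g|_{d_0}^2+\gamma\,|\nabla^1 g|_{d_1}^2 \;\le\; f^2+\gamma K^2 \;=\; (f^\gamma)^2.
\]
Monotonicity and strict positivity of $f^\gamma$ are inherited from $f$, so all hypotheses on the payoff functions are in place.

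With the assumptions verified, I would then invoke the penalisation scheme of \cite{bovo2022variational}: the VI \eqref{eq:VI} is approximated by a sequence of semi-linear PDEs combining an obstacle penalty with a gradient penalty of the form $\psi_\eps(|\nabla u|_\gamma^2-(f^\gamma)^2)$, from which one derives a-priori estimates uniform in the penalisation parameters, passes to the limit to obtain a maximal $W^{1,2,p}_{\ell oc}$ solution of \eqref{eq:VI}, and identifies that solution with $u^\gamma$. The optimality of $\tau_*^\gamma$ then follows, as in \cite{bovo2022variational}, by applying an Itô-Tanaka formula to $u^\gamma$ along $X^{[n,\nu],\gamma}$ and using the two equivalent formulations of \eqref{eq:VI} to generate the appropriate sub- and super-martingale structures before and after $\tau_*^\gamma$.

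The main obstacle I expect, and what the Appendix must address, is verifying that the PDE estimates underpinning the scheme of \cite{bovo2022variational} (in particular the gradient bound in \cite[Prop.\ 4.9]{bovo2022variational} and the Hessian-type bound in \cite[Lem.\ 5.8]{bovo2022variational}) survive the replacement of $|\cdot|_d$ by $|\cdot|_\gamma$. Here $\gamma>0$ is essential: the Hessian $D^2|p|_\gamma^2$ is diagonal with strictly positive entries ($2$ on the first $d_0$ coordinates and $2\gamma$ on the last $d_1$), so all quadratic forms appearing in those proofs remain coercive, and one recovers bounds on $|\nabla u^{\eps,\delta}|_\gamma$ which, by equivalence of norms, translate into bounds on $|\nabla u^{\eps,\delta}|_d$ (with constants that blow up as $\gamma\downarrow 0$). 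This is precisely the coercivity that fails at $\gamma=0$, as highlighted in Section \ref{sec:challenge}, and it is the reason Theorem \ref{thm:usolvar_ga} is available only for $\gamma\in(0,1]$, motivating the separate limiting argument to be carried out in Section \ref{sec:valgame}.
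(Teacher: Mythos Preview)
Your proposal is correct and follows essentially the same approach as the paper's Appendix: verify that Assumptions~\ref{ass:gen1} and~\ref{ass:gen3} imply the hypotheses of \cite{bovo2022variational} for the $\gamma$-game (in particular the bound $|\nabla g|_\gamma\le f^\gamma$), then rerun the penalisation scheme with $\psi_\eps(|\nabla u|_\gamma^2-(f^\gamma)^2)$ and check that the key gradient and Hessian estimates (\cite[Prop.~4.9, Prop.~5.1, Lem.~5.8]{bovo2022variational}) survive because $|\cdot|_\gamma^2\ge \gamma|\cdot|_d^2$ gives the needed coercivity when $\gamma>0$. The paper's Appendix additionally tracks a few minor changes you do not mention explicitly (the Hamiltonian's first-order conditions and \cite[Eq.~(5.18)]{bovo2022variational}), but these follow exactly the pattern you describe.
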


\begin{remark}\label{rem:bddug}
Thanks to the boundedness and positivity of $f,g,h$, the value function of the game $u^\gamma$ is bounded and non-negative. The upper bound is obtained by taking the sub-optimal control $(n,\nu)\equiv(e_1,0)$ with $e_1=(1,0,\ldots, 0)$. In turn, by the maximality of $u^\gamma$ across the solutions of \eqref{eq:VI}, we have that any solution of \eqref{eq:VI} in $W^{1,2,p}_{\ell oc}(\R^{d+1}_{0,T})$ is bounded. 
\end{remark}

The family of stopping times $(\tau^\gamma_*)_{\gamma>0}$ is optimal for the stopper in the corresponding family of games with values $(u^\gamma)_{\gamma>0}$. However, it turns out that studying the convergence of $\tau^\gamma_*$ for $\gamma\downarrow 0$ is not an easy task. For that reason we introduce another family of stopping times and we prove some of its useful properties. 
For $\gamma>0$ and $(n,\nu)\in\cA^d$, let 
\begin{align*}
&\sigma_*^\gamma\coloneqq\inf\{s\geq 0| u^\gamma(t+s,X_{s-}^{[n,\nu],\gamma})-g(t+s,X_{s-}^{[n,\nu],\gamma})=0\},
\end{align*}
and define
\begin{align}\label{eq:thetagamma}
\theta^{\gamma}_*\coloneqq\tau^\gamma_*\wedge\sigma^\gamma_*.
\end{align} 
Notice that given $(t,x)\in\R^{d+1}_{0,T}$ and $(n,\nu)\in\cA^d$ the stopping time depends on both $(t,x)$ and $(n,\nu)$ via the controlled dynamics $X^{[n,\nu],\gamma}$ (Remark \ref{rem:opt}). Therefore we sometimes use the notation $\theta^\gamma_* = \theta^\gamma_* (t,x; n, \nu)$ or the shorter $\theta^\gamma_*=\theta^\gamma_*(n,\nu)$.

Next we show that $\theta^{\gamma}_*$ is optimal for the stopper in the game with value $u^\gamma$.
\begin{lemma}\label{lem:convst}
Fix $(t,x)\in\R^{d+1}_{0,T}$. We have 
\begin{equation}\label{eqn:theta_gamma_opt}
u^\gamma(t,x)\le \cJ^\gamma_{t,x}(n,\nu,\theta^{\gamma}_*),
\end{equation} 
for any $(n,\nu)\in\cA^d$. Furthermore,
\[
u^\gamma(t,x)=\inf_{(n,\nu)\in\cA^d}\cJ^\gamma_{t,x}(n,\nu,\theta^{\gamma}_*(n,\nu)),
\] 
hence $\theta^{\gamma}_*$ is optimal for the stopper in the game with value $u^\gamma$.
\end{lemma}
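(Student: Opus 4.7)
The plan is to apply an It\^{o}-type argument to the c\`adl\`ag process $s \mapsto e^{-rs} u^\gamma(t+s, X^{[n,\nu],\gamma}_s)$ evaluated at its \emph{left limit} at $\theta^\gamma_*$, and then to convert back to the right-limit formulation of $\cJ^\gamma_{t,x}$ by invoking the Lipschitz estimates for $u^\gamma$ and $g$ induced by the gradient constraint. The point of introducing $\sigma^\gamma_*$ alongside $\tau^\gamma_*$ in the definition of $\theta^\gamma_*$ is precisely that, because $\theta^\gamma_* \le \tau^\gamma_* \wedge \sigma^\gamma_*$, on $[0,\theta^\gamma_*)$ one has both $u^\gamma(t+s, X_s^{[n,\nu],\gamma}) > g(t+s, X_s^{[n,\nu],\gamma})$ and $u^\gamma(t+s, X_{s-}^{[n,\nu],\gamma}) > g(t+s, X_{s-}^{[n,\nu],\gamma})$, so the parabolic inequality from the VI \eqref{eq:VI} is active on the entire predictable interval up to $\theta^\gamma_*$.

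First I would apply the Sobolev It\^{o} formula to $e^{-rs} u^\gamma(t+s, X_s^{[n,\nu],\gamma})$, justified as in \cite{bovo2022variational} by mollifying $u^\gamma \in W^{1,2,p}_{\ell oc}$ and passing to the limit, stopping at $s = \theta^\gamma_*-$. The resulting decomposition produces a drift contribution $\int_0^{\theta^\gamma_*} e^{-rs}(\partial_t + \cL - r) u^\gamma\,\ud s$, a martingale $M$, a singular-continuous integral $\int_{[0,\theta^\gamma_*)} e^{-rs}\langle \nabla u^\gamma, n^\gamma_s\rangle\,\ud\nu^c_s$, and a jump sum $\sum_{s<\theta^\gamma_*} e^{-rs}\Delta u^\gamma$ that excludes any jump at $\theta^\gamma_*$ itself. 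The VI gives $(\partial_t + \cL - r) u^\gamma \ge -h$ a.e.\ on $[0,\theta^\gamma_*)$; the gradient constraint $|\nabla u^\gamma|_\gamma \le f^\gamma$ combined with $|n|_\gamma \le |n|_d = 1$ (Cauchy--Schwarz in the $\gamma$-inner product) yields $\langle \nabla u^\gamma, n^\gamma\rangle = \langle \nabla u^\gamma, n\rangle_\gamma \ge -f^\gamma$ and, by integration along the jump, $\Delta u^\gamma \ge -f^\gamma\Delta\nu$. After verifying that $M$ is a true martingale via boundedness of $\nabla u^\gamma$ and the admissibility bound $\E_x[\nu_{T-t}^2] < \infty$, taking expectations yields
\[
u^\gamma(t,x) \le \E_x\Big[e^{-r\theta^\gamma_*} u^\gamma(t+\theta^\gamma_*, X_{\theta^\gamma_*-}^{[n,\nu],\gamma}) + \int_0^{\theta^\gamma_*} e^{-rs} h(t+s, X_s^{[n,\nu],\gamma})\,\ud s + \int_{[0,\theta^\gamma_*)} e^{-rs} f^\gamma(t+s)\,\ud \nu_s\Big].
\]

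Next I would establish the pathwise bound $u^\gamma(t+\theta^\gamma_*, X_{\theta^\gamma_*-}^{[n,\nu],\gamma}) \le g(t+\theta^\gamma_*, X_{\theta^\gamma_*}^{[n,\nu],\gamma}) + f^\gamma(t+\theta^\gamma_*)\Delta \nu_{\theta^\gamma_*}$ via a two-case analysis. On the event $\{\theta^\gamma_* = \tau^\gamma_*\}$ I would combine $u^\gamma(\cdot, X_{\tau^\gamma_*}) = g(\cdot, X_{\tau^\gamma_*})$ (the definition of $\tau^\gamma_*$) with the Lipschitz estimate $|u^\gamma(\cdot, X_{\tau^\gamma_*-}) - u^\gamma(\cdot, X_{\tau^\gamma_*})| \le f^\gamma\Delta\nu$ coming from $|\nabla u^\gamma|_\gamma \le f^\gamma$. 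On the complementary event $\{\theta^\gamma_* = \sigma^\gamma_* < \tau^\gamma_*\}$ I would use $u^\gamma(\cdot, X_{\sigma^\gamma_*-}) = g(\cdot, X_{\sigma^\gamma_*-})$ (the definition of $\sigma^\gamma_*$) together with the analogous Lipschitz bound for $g$ derived from $|\nabla g|_\gamma \le f^\gamma$, which is exactly the computation carried out just above \eqref{eq:VI}. Substituting this pathwise bound into the It\^{o} inequality, the additional term $f^\gamma(t+\theta^\gamma_*)\Delta\nu_{\theta^\gamma_*}$ precisely closes the integration interval from $[0,\theta^\gamma_*)$ to $[0,\theta^\gamma_*]$, delivering $u^\gamma(t,x) \le \cJ^\gamma_{t,x}(n,\nu,\theta^\gamma_*)$, which is \eqref{eqn:theta_gamma_opt}.

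The second assertion then follows by taking the infimum over $(n,\nu)\in\cA^d$ in \eqref{eqn:theta_gamma_opt} and combining it with the elementary inequality $u^\gamma = \overline u^\gamma = \inf_{(n,\nu)}\sup_\tau \cJ^\gamma_{t,x}(n,\nu,\tau) \ge \inf_{(n,\nu)} \cJ^\gamma_{t,x}(n,\nu,\theta^\gamma_*(n,\nu))$. The main technical hurdle will be the rigorous justification of the Sobolev It\^{o} formula for $u^\gamma$ along a trajectory jointly driven by Brownian motion and a singular control with jumps; however this can be handled by the same mollification-and-limit scheme already developed in \cite{bovo2022variational}, so the genuinely new content is the case analysis at $\theta^\gamma_*$ that simultaneously exploits the $f^\gamma$-Lipschitz property (in the $\gamma$-norm) of both $u^\gamma$ and $g$.
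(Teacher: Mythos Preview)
Your approach is essentially the same as the paper's: a mollified It\^{o}/Dynkin expansion yielding the left-limit inequality for $u^\gamma(t+\theta^\gamma_*,X_{\theta^\gamma_*-})$, followed by the identical case split $\{\tau^\gamma_*\le\sigma^\gamma_*\}$ versus $\{\sigma^\gamma_*<\tau^\gamma_*\}$ using the $f^\gamma$-Lipschitz bounds on $u^\gamma$ and $g$. The one step you treat as immediate but should not is the claim, on $\{\sigma^\gamma_*<\tau^\gamma_*\}$, that $u^\gamma(t+\sigma^\gamma_*,X_{\sigma^\gamma_*-})=g(t+\sigma^\gamma_*,X_{\sigma^\gamma_*-})$: since $s\mapsto X_{s-}$ is only left-continuous, the defining set of $\sigma^\gamma_*$ need not contain its infimum. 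The paper fills this in by observing that $\sigma^\gamma_*<\tau^\gamma_*$ gives $(u^\gamma-g)(t+\sigma^\gamma_*,X_{\sigma^\gamma_*})>0$, and right-continuity of $X$ propagates this positivity to a right-neighbourhood $(\sigma^\gamma_*,\sigma^\gamma_*+\delta]$, so the hitting set has no points there and $\sigma^\gamma_*$ itself must belong to it.
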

\begin{proof}
With no loss of generality we assume
\[
\cC_\gamma=\{(t,x)\in\R^{d+1}_{0,T}:u^\gamma(t,x)> g(t,x)\}\neq\varnothing.
\]
If $\cC_\gamma=\varnothing$ then $\theta^\gamma_*=0$ and the lemma trivially holds. 
Next we adapt an argument from the verification result for singular control, \cite[Thm. VIII.4.1]{fleming2006controlled}, to overcome the lack of smoothness of $u^\gamma$.

Let $(\zeta_k)_{k\in\N}$ be a standard family of mollifiers and consider the sequence $(w_k^\gamma)_{k\in\N}\subset C^{\infty}(\R^{d+1}_{0,T})$, obtained by convolution $w_k^\gamma\coloneqq u^\gamma*\zeta_k$. Since $u^\gamma$ belongs to $W^{1,2,p}_{\ell oc}(\R^{d+1}_{0,T})$ which is compactly embedded in $C^{0,1,\alpha}_{\ell oc}(\R^{d+1}_{0,T})$ for $p > d+2$ and some $\alpha \in (0,1)$, we have $w_k^\gamma\to u^\gamma$ and $\nabla w_k^\gamma\to \nabla u^\gamma$ uniformly on compact sets, as $k\to\infty$; moreover, $\partial_t w_k^\gamma\to\partial_t u^\gamma$ and $D^2 w_k^\gamma\to D^2 u^\gamma$ strongly in $L^p_{\ell oc}(\R^{d+1}_{0,T})$ for all $p\in[1,\infty)$, as $k\to\infty$ (see, e.g., arguments in Thm.~5.3.1 and Appendix~C.4 in \cite{evans10}).  For notational simplicity, denote the operator $(\partial_t+\cL-r)$ by $\tilde{\cL}$. 

Standard calculations based on integration by parts yield
\[
\partial_t w_k^\gamma= (\partial_t u^\gamma)*\zeta_k,\quad \partial_{x_j} w_k^\gamma= (\partial_{x_j} u^\gamma)*\zeta_k,\quad \partial_{x_ix_j} w_k^\gamma =(\partial_{x_ix_j} u^\gamma)*\zeta_k,
\]
and therefore
\begin{align*}
&|\,(\tilde{\cL}u^\gamma*\zeta_k)(t,x)-(\tilde{\cL}w_k^\gamma)(t,x)|\\
&=\Big|\int_{\R^{d+1}_{0,T}}\Big(\sum_{i,j=1}^d(a_{ij}(y)-a_{ij}(x))\partial_{x_ix_j}u^\gamma(s,y)+\sum_{i=1}^d(b_{i}(y)-b_{i}(x))\partial_{x_i}u^\gamma(s,y)\Big)\zeta_k(t-s,x-y)\,\ud s\ud y\Big|.
\end{align*}
Since first and second order derivatives of $u^\gamma$ belong to $L^p_{\ell oc}(\R^{d+1}_{0,T})$ for any $p\in[1,\infty)$, then H\"older's inequality and continuity of $a$ and $b$ yield for any compact $\Sigma\subset\R^{d+1}_{0,T}$
\begin{align}\label{eq:convwklwk}
\lim_{k\to\infty}Q_k^\Sigma =0,
\end{align}
where $Q_k^\Sigma:= \sup_{(t,x)\in \Sigma}|(\tilde{\cL}u^\gamma*\zeta_k)(t,x)-(\tilde{\cL}w_k^\gamma)(t,x)|$. Since $u^\gamma$ is a solution of \eqref{eq:VI}, we have $(\tilde{\cL} u^\gamma+h)(t,x)\geq 0$ for almost every $(t,x)\in\cC_{\gamma}$ and therefore
\begin{align}\label{eq:lwk<hk}
\chi^\gamma_k(t,x)\coloneqq\big((\tilde{\cL} u^\gamma+h)*\zeta_k\big)(t,x)\geq 0,
\end{align}
for all $(t,x)\in\cC_{\gamma}$. Finally, denoting $h_k = h * \zeta_k$ and $M^\Sigma_k:=\sup_{(t,x)\in\Sigma}\big|h_k(t,x)-h(t,x)\big|$ we have
\begin{align}\label{eq:hhk}
\lim_{k\to\infty}M^\Sigma_k=0.
\end{align}
From \eqref{eq:convwklwk}, \eqref{eq:lwk<hk} and \eqref{eq:hhk} we have
\begin{align}\label{eq:limsup-w}
\begin{split}
&\liminf_{k\to\infty}\inf_{(t,x)\in \Sigma\cap\,\cC_\gamma}\Big((\tilde{\cL}w_k^\gamma)(t,x)+h(t,x)\Big)\\
&\ge \liminf_{k\to\infty}\Big(\inf_{(t,x)\in \Sigma\cap\,\cC_\gamma}\chi^\gamma_k(t,x)-Q^\Sigma_k-M^\Sigma_k\Big) \ge 0.
\end{split}
\end{align}

Fix $(n,\nu)\in\cA^d$ and let $\rho_m=\inf\{s\ge 0 | X^{[n,\nu],\gamma}_s\notin B_m\}\wedge(T-t)$. By an application of Dynkin's formula we obtain
\begin{align*}
w_k^\gamma(t,x)= \E_x\Big[&e^{-r(\theta^\gamma_*\wedge\rho_{m})}w_k^\gamma\big(t\!+\!\theta_*^\gamma\wedge\rho_{m},X_{\theta_*^\gamma\wedge\rho_{m}}^{[n,\nu],\gamma}\big)\!-\!\int_{0}^{\theta_*^\gamma\wedge\rho_{m}}\!\!\!e^{-rs}\tilde\cL w_k^\gamma(t\!+\!s,X_s^{[n,\nu],\gamma})\ud s \\
&-\!\int_{0}^{\theta_*^\gamma\wedge\rho_{m}}\!\!e^{-rs}\langle\nabla w_k^\gamma(t\!+\!s,X_{s-}^{[n,\nu],\gamma}), n_s\rangle_\gamma\,\ud \nu_s^c\\
&-\! \sum_{s\leq\theta_*^\gamma\wedge\rho_{m}}\!\!\!e^{-rs}\!\!\int_{0}^{\Delta\nu_{s}}\!\!\langle\nabla w_k^\gamma\big(t\!+\!s,X_{s-}^{[n,\nu],\gamma}\!+\!\lambda n_s\big),n_s\rangle_\gamma\, \ud\lambda \Big].
\end{align*}
The contribution to $w_k^\gamma$ of the final jump of $X^{[n,\nu],\gamma}$ at $\theta_*^\gamma\wedge\rho_{m}$ is removed using
\begin{multline}
\label{eqn:deal_jump}
w_k^\gamma\big(t+\theta_*^\gamma\wedge\rho_{m},X_{\theta_*^\gamma\wedge\rho_{m}}^{[n,\nu],\gamma}\big) \\
=
w_k^\gamma\big(t+\theta_*^\gamma\wedge\rho_{m},X_{\theta_*^\gamma\wedge\rho_{m}-}^{[n,\nu],\gamma}\big)
+ \int_{0}^{\Delta\nu_{\theta_*^\gamma\wedge\rho_{m}}}\!\!\langle\nabla w_k^\gamma(t+s,X_{\theta_*^\gamma\wedge\rho_{m}-}^{[n,\nu],\gamma}+\lambda n_{\theta_*^\gamma\wedge\rho_{m}}),n_{\theta_*^\gamma\wedge\rho_{m}}\rangle_\gamma\, \ud\lambda.
\end{multline}
Then, substituting the latter in the expectation yields
\begin{align*}
w_k^\gamma(t,x)= \E_x\Big[&e^{-r(\theta^\gamma_*\wedge\rho_{m})}w_k^\gamma\big(t+\theta_*^\gamma\wedge\rho_{m},X_{\theta_*^\gamma\wedge\rho_{m}-}^{[n,\nu],\gamma}\big)\!-\!\int_{0}^{\theta_*^\gamma\wedge\rho_{m}}\!\!\!e^{-rs}\tilde\cL w_k^\gamma(t+s,X_s^{[n,\nu],\gamma})\ud s \\
&-\int_{0}^{\theta_*^\gamma\wedge\rho_{m}}\!\!e^{-rs}\langle\nabla w_k^\gamma(t+s,X_{s-}^{[n,\nu],\gamma}), n_s\rangle_\gamma\,\ud \nu_s^c\\
&- \sum_{s<\theta_*^\gamma\wedge\rho_{m}}\!\!\!e^{-rs}\!\!\int_{0}^{\Delta\nu_{s}}\!\!\langle\nabla w_k^\gamma(t+s,X_{s-}^{[n,\nu],\gamma}+\lambda n_s),n_s\rangle_\gamma\, \ud\lambda \Big].
\end{align*}
We expand $\tilde\cL w_k^\gamma(t+s,X_s^{[n,\nu],\gamma})$ as $\big(\tilde\cL w_k^\gamma + h\big)(t+s,X_s^{[n,\nu],\gamma}) - h(t+s,X_s^{[n,\nu],\gamma})$ and let $k\to\infty$. We apply the inequality \eqref{eq:limsup-w}) to the term $(\tilde\cL w_k^\gamma + h)$ and the dominated convergence theorem for the remaining terms, justified by the uniform convergence of $(w_k^\gamma,\nabla w_k^\gamma)$ to $(u^\gamma,\nabla u^\gamma)$ on compacts:
\begin{align*}
u^\gamma(t,x)\le \E_x\Big[&e^{-r(\theta^\gamma_*\wedge\rho_{m})}u^\gamma(t+\theta_*^\gamma\wedge\rho_{m},X_{\theta_*^\gamma\wedge\rho_{m}-}^{[n,\nu],\gamma})\!+\!\int_{0}^{\theta_*^\gamma\wedge\rho_{m}}\!\!\!e^{-rs}h(t+s,X_s^{[n,\nu],\gamma})\ud s \\
&-\!\int_{0}^{\theta_*^\gamma\wedge\rho_{m}}\!\!e^{-rs}\langle\nabla u^\gamma(t\!+\!s,X_{s-}^{[n,\nu],\gamma}), n_s\rangle_\gamma\,\ud \nu_s^c\\
&- \sum_{s<\theta_*^\gamma\wedge\rho_{m}}\!\!\!e^{-rs}\!\!\int_{0}^{\Delta\nu_{s}}\!\!\langle\nabla u^\gamma(t+s,X_{s-}^{[n,\nu],\gamma}+\lambda n_s),n_s\rangle_\gamma\, \ud\lambda \Big].
\end{align*} 
Notice that $\P_x(\rho_m<\theta^\gamma_*)\downarrow 0$ as $m\to\infty$. Then, in the limit as $m\to\infty$ the dominated convergence theorem yields (recall $u^\gamma$ and $h$ are bounded, $|\nabla u^\gamma|_\gamma\le f^\gamma$ and $\E_x [\nu_{T-t}] < \infty$)
\begin{align}\label{eq:ev0}
u^\gamma(t,x)\le \E_x\Big[&e^{-r\theta^\gamma_*}u^\gamma(t+\theta_*^\gamma,X_{\theta_*^\gamma-}^{[n,\nu],\gamma})\!+\!\int_{0}^{\theta_*^\gamma}\!\!\!e^{-rs}h(t+s,X_s^{[n,\nu],\gamma})\ud s \notag\\
&-\int_{0}^{\theta_*^\gamma}\!\!e^{-rs}\langle\nabla u^\gamma(t\!+\!s,X_{s-}^{[n,\nu],\gamma}), n_s\rangle_\gamma\,\ud \nu_s^c\\
&\!-\! \sum_{s<\theta_*^\gamma}\!\!e^{-rs}\!\!\int_{0}^{\Delta\nu_{s}}\!\!\langle\nabla u^\gamma(t\!+\!s,X_{s-}^{[n,\nu],\gamma}\!+\!\lambda n_s),n_s\rangle_\gamma\, \ud\lambda \Big].\notag
\end{align}
On $\{\tau^\gamma_*\le \sigma^\gamma_*\}$ we have 
\begin{equation}\label{eqn:tau_less_sigma}
\begin{aligned}
u^\gamma(t+\theta_*^\gamma,X_{\theta_*^\gamma-}^{[n,\nu],\gamma})
&=u^\gamma(t+\tau_*^\gamma,X_{\tau_*^\gamma-}^{[n,\nu],\gamma})\\
&=u^\gamma(t+\tau_*^\gamma,X_{\tau_*^\gamma}^{[n,\nu],\gamma}) - \int_{0}^{\Delta\nu_{\tau_*^\gamma}}\!\!\langle\nabla u^\gamma(t\!+\!\tau_*^\gamma,X_{\tau_*^\gamma-}^{[n,\nu],\gamma}\!+\!\lambda n_{\tau_*^\gamma}),n_{\tau_*^\gamma}\rangle_\gamma\, \ud\lambda\\
&=g(t+\tau_*^\gamma,X_{\tau_*^\gamma}^{[n,\nu],\gamma}) - \int_{0}^{\Delta\nu_{\tau_*^\gamma}}\!\!\langle\nabla u^\gamma(t\!+\!\tau_*^\gamma,X_{\tau_*^\gamma-}^{[n,\nu],\gamma}\!+\!\lambda n_{\tau_*^\gamma}),n_{\tau_*^\gamma}\rangle_\gamma\, \ud\lambda\\
&\le g(t+\tau_*^\gamma,X_{\tau_*^\gamma}^{[n,\nu],\gamma}) + f^\gamma(t + \tau_*^\gamma) \Delta\nu_{\tau_*^\gamma},
\end{aligned}
\end{equation}
where the third equality is by the definition of $\tau^\gamma_*$, the continuity of $u^\gamma$ and $g$, and the right-continuity of $t\mapsto X^{[n,\nu],\gamma}_t$; the inequality follows from $|\nabla u^\gamma|_\gamma\le f^\gamma$. We insert the estimate \eqref{eqn:tau_less_sigma} into the expression under the expectation in \eqref{eq:ev0} and apply the bound $|\nabla u^\gamma|_\gamma\le f^\gamma$ again to obtain 
\begin{equation}\label{eq:ev1}
\begin{aligned}
&e^{-r\theta^\gamma_*}u^\gamma(t+\theta_*^\gamma,X_{\theta_*^\gamma-}^{[n,\nu],\gamma}) +\!\int_{0}^{\theta_*^\gamma}\!\!\!e^{-rs}h(t+s,X_s^{[n,\nu],\gamma})\ud s\\
&-\int_{0}^{\theta_*^\gamma}\!\!e^{-rs}\langle\nabla u^\gamma(t\!+\!s,X_{s-}^{[n,\nu],\gamma}), n_s\rangle_\gamma\,\ud \nu_s^c
- \sum_{s<\theta_*^\gamma}\!\!e^{-rs}\!\!\int_{0}^{\Delta\nu_{s}}\!\!\langle\nabla u^\gamma(t\!+\!s,X_{s-}^{[n,\nu],\gamma}\!+\!\lambda n_s),n_s\rangle_\gamma\, \ud\lambda \\
&\!\le e^{-r\theta^\gamma_*}g(t+\theta_*^\gamma,X_{\theta^\gamma_*}^{[n,\nu],\gamma}) + \int_{0}^{\theta_*^\gamma}\!\!\!e^{-rs}h(t\!+\!s,X_s^{[n,\nu],\gamma})\ud s + \int_{[0,\theta^\gamma_*]}\!\!e^{-rs}f^\gamma(t\!+\!s)\ud\nu_s.
\end{aligned}
\end{equation}

On the event $\{\sigma^\gamma_*<\tau^\gamma_*\}$, the arguments are more involved. We start from showing that 
\[
u^\gamma(t+\sigma_*^\gamma,X_{\sigma_*^\gamma-}^{[n,\nu],\gamma})=g(t+\sigma_*^\gamma,X_{\sigma_*^\gamma-}^{[n,\nu],\gamma}).
\] 
Since $\sigma^\gamma_* <\tau^\gamma_* \le T-t$, we have $(u^\gamma-g)(t+\sigma_*^\gamma,X_{\sigma_*^\gamma}^{[n,\nu],\gamma}) > 0$. The process 
\[
s \mapsto (u^\gamma - g)\big(t+s, X_{s}^{[n,\nu],\gamma}\big)
\] 
is right-continuous due to continuity of $u^\gamma$ and $g$ and right-continuity of $t\mapsto X_{t}^{[n,\nu],\gamma}$. Using this fact we deduce that for $\P_x$-almost every $\omega$ there is $\eps(\omega), \delta(\omega) > 0$ such that 
\[
(u^\gamma - g)\big(t+s, X_{s}^{[n,\nu],\gamma}\big) > \eps(\omega), \qquad \forall\, s \in [\sigma^\gamma_*, \sigma^\gamma_* + \delta(\omega)].
\]
This means that $(\sigma^\gamma_*, \sigma^\gamma_* + \delta(\omega)] \cap \big\{ s \ge 0\,\big|\, (u^\gamma - g)\big(t+s, X_{s-}^{[n,\nu],\gamma}\big) = 0 \big\} = \varnothing$. Hence, by the definition of $\sigma_*^\gamma$, we conclude that $(u^\gamma - g)\big(t+\sigma_*^\gamma, X_{\sigma_*^\gamma-}^{[n,\nu],\gamma}\big) = 0$. 

We now rewrite
\begin{equation}\label{eqn:sigma_less_tau}
\begin{aligned}
u^\gamma(t+\theta_*^\gamma,X_{\theta_*^\gamma-}^{[n,\nu],\gamma})&= u^\gamma(t+\sigma_*^\gamma,X_{\sigma_*^\gamma-}^{[n,\nu],\gamma})
= g(t+\sigma_*^\gamma,X_{\sigma_*^\gamma-}^{[n,\nu],\gamma})\\
&= g(t+\sigma_*^\gamma,X_{\sigma_*^\gamma}^{[n,\nu],\gamma})
- \int_{0}^{\Delta\nu_{\sigma_*^\gamma}}\!\!\langle\nabla g(t\!+\!\sigma_*^\gamma,X_{\sigma_*^\gamma-}^{[n,\nu],\gamma}\!+\!\lambda n_{\sigma_*^\gamma}),n_{\sigma_*^\gamma}\rangle_\gamma\, \ud\lambda \\
&\le g(t+\sigma_*^\gamma,X_{\sigma_*^\gamma}^{[n,\nu],\gamma}) + f^\gamma(t + \sigma_*^\gamma) \Delta\nu_{\sigma_*^\gamma},
\end{aligned}
\end{equation}
where in the first line we use the identity $u^\gamma(t+\sigma_*^\gamma,X_{\sigma_*^\gamma-}^{[n,\nu],\gamma})=g(t+\sigma_*^\gamma,X_{\sigma_*^\gamma-}^{[n,\nu],\gamma})$ proved above and in the last line the bound $|\nabla g|_\gamma\le f^\gamma$. We insert the estimate \eqref{eqn:sigma_less_tau} into the expression under the expectation on the right-hand side of \eqref{eq:ev0} and apply the bound $|\nabla u^\gamma|_\gamma\le f^\gamma$ to obtain \eqref{eq:ev1}.

Substituting \eqref{eq:ev1} inside the expectation on the right-hand side of \eqref{eq:ev0} yields
\begin{align*}
u^\gamma(t,x)&\le\E_x\Big[e^{-r\theta_*^\gamma}g(t\!+\!\theta_*^\gamma,X_{\theta_*^\gamma}^{[n,\nu],\gamma})+\int_{0}^{\theta_*^\gamma}\!\!e^{-rs}h(t\!+\!s,X_s^{[n,\nu],\gamma})\ud s\!+\!\int_{[0,\theta_*^\gamma]}\!\!e^{-rs} f^\gamma(t\!+\!s)\,\ud \nu_s \Big]\notag\\
&=\cJ_{t,x}^\gamma(n,\nu,\theta^\gamma_*),
\end{align*}
which proves the first claim in the lemma. By arbitrariness of the pair $(n,\nu)\in\cA^d$ we conclude
\[
u^\gamma(t,x)\le\inf_{(n,\nu)\in\cA^d} \cJ_{t,x}^\gamma(n,\nu,\theta^\gamma_*)\le u^\gamma(t,x),
\]
hence proving the second statement of the lemma. 
\end{proof}
\begin{remark}
When the sets $\{u^\gamma=g\}$ and $\{|\nabla u^\gamma|_\gamma=f\}$ are disjoint, heuristic arguments based on classical verification theorems suggest that the controller and the stopper do not act simultaneously. In particular, this means that with no loss of generality we should be able to restrict the class of admissible pairs $(n,\nu)$ to those for which $\Delta\nu_{\theta^\gamma_*}=0$ so that $\tau^\gamma_*(n,\nu)=\sigma^\gamma_*(n,\nu)$. This type of analysis is left for future work on more concrete examples. 
\end{remark}

\subsection{Some stability estimates}\label{sec:stability}
We next provide a stability estimate in $L^1$ for the approximating process. The proof uses a generalisation of \cite[Lem.\ 5.1]{deangelis2019numerical} which is given as Lemma \ref{lem:DGI_jumps} in Appendix for completeness.

\begin{proposition}\label{thm:convgamma0}
Fix $(t,x)\in\R^{d+1}_{0,T}$ and a treble $[(n,\nu),\tau]\in\cA^d\times\cT_t$. Then, there exists $(\bar{n},\bar{\nu})\in \cA^{d_0}$ such that 
\begin{align}
\E_x\Big[\big|X_\tau^{[n,\nu],\gamma}-X_\tau^{[\bar{n},\bar{\nu}]}\big|_d\Big]\leq \gamma K_3\E_x[\nu_{T-t}],
\end{align}
where $K_3>0$ is a constant depending only on $d$, $D_1$ and $T$. 
\end{proposition}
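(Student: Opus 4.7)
The plan is to construct a genuinely constrained control $(\bar n,\bar\nu)\in\cA^{d_0}$ that reproduces the effect of $(n,\nu)$ on the first $d_0$ coordinates exactly, and to treat the residual action on the last $d_1$ coordinates---which is already weighted by $\gamma$ in \eqref{eq:SDEgam}---as a perturbation of size $\gamma$. Writing $n_s=(n^{[d_0]}_s,n^{[d_1]}_s)$ with the two groups of coordinates in $\R^{d_0}$ and $\R^{d_1}$, I would set
\[
\bar\nu_s \coloneqq \int_{[0,s]}|n^{[d_0]}_u|_{d_0}\,\ud\nu_u,
\qquad
\bar n_s \coloneqq \Big(\frac{n^{[d_0]}_s}{|n^{[d_0]}_s|_{d_0}},0\Big)\,\ind_{\{|n^{[d_0]}_s|_{d_0}>0\}} + (\mathsf e,0)\,\ind_{\{|n^{[d_0]}_s|_{d_0}=0\}},
\]
where $\mathsf e\in\R^{d_0}$ is any fixed unit vector. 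A routine verification, using $|n^{[d_0]}_u|_{d_0}\le 1$ to transfer the $L^2$-integrability from $\nu_T$ to $\bar\nu_T$, confirms $(\bar n,\bar\nu)\in\cA^{d_0}$: $|\bar n_s|_d=|\bar n_s|_{d_0}=1$ by construction, and $\bar\nu$ inherits $\F$-adaptedness, monotonicity, right-continuity and $\bar\nu_{0-}=0$ from $\nu$.

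By construction, on both $\{|n^{[d_0]}_s|_{d_0}>0\}$ (by cancellation of the normalisation) and $\{|n^{[d_0]}_s|_{d_0}=0\}$ (where both sides vanish), we have $\bar n_s\,\ud\bar\nu_s=(n^{[d_0]}_s,0)\,\ud\nu_s$. Hence, subtracting \eqref{eq:SDEgam} from its analogue for $X^{[\bar n,\bar\nu]}$, the difference $Y_s\coloneqq X^{[n,\nu],\gamma}_s - X^{[\bar n,\bar\nu]}_s$ starts at $Y_{0-}=0$ and satisfies
\[
\ud Y_s = \bigl[b(X^{[n,\nu],\gamma}_s) - b(X^{[\bar n,\bar\nu]}_s)\bigr]\,\ud s + \bigl[\kappa(X^{[n,\nu],\gamma}_s) - \kappa(X^{[\bar n,\bar\nu]}_s)\bigr]\,\ud W_s + \gamma\,(0,n^{[d_1]}_s)\,\ud\nu_s,
\]
where the singular perturbation obeys $|(0,n^{[d_1]}_s)|_d=|n^{[d_1]}_s|_{d_1}\le 1$.

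The conclusion then follows from Lemma \ref{lem:DGI_jumps}, which produces an $L^1$-stability estimate for two SDEs driven by the same Brownian motion, with Lipschitz coefficients, and an extra bounded-variation forcing term. Combining the Lipschitz constant $D_1$ of $b,\kappa$ from Assumption \ref{ass:gen1}(i) with the bound $\gamma$ on the density of the perturbation against $\ud\nu$ yields $\E_x[|Y_\tau|_d]\le K_3\,\gamma\,\E_x[\nu_{T-t}]$ with $K_3=K_3(d,D_1,T)$, which is the required bound (since $\tau\le T-t$, $\E_x[\nu_\tau]$ is controlled by $\E_x[\nu_{T-t}]$). The main obstacle is not the projection step, which is elementary, but the stability estimate itself: the usual Gronwall argument fails because $\gamma(0,n^{[d_1]}_s)\,\ud\nu_s$ is only of bounded variation and can exhibit jumps. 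Lemma \ref{lem:DGI_jumps}, a generalisation of \cite[Lem.\ 5.1]{deangelis2019numerical} proved in the appendix, is designed precisely to cope with such singular forcing, presumably via a Tanaka/local-time argument combined with a careful treatment of the jumps of $\nu$.
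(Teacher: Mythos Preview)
Your construction of $(\bar n,\bar\nu)$ and the resulting SDE for the difference $Y$ are exactly what the paper does, and the identification of the $\gamma(0,n^{[d_1]}_s)\,\ud\nu_s$ perturbation is correct. The approach via Tanaka/local-time is also the right one.

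However, you mischaracterise Lemma \ref{lem:DGI_jumps}: it does \emph{not} ``produce an $L^1$-stability estimate for two SDEs''. It only bounds the expected local time $\E[L^0_t(X)]$ of a single c\`adl\`ag semimartingale, and the bound contains a dangerous term $\tfrac{1}{\eps}\E[\int_0^t \ind_{\{X_s>\eps\}}e^{1-X_s/\eps}\,\ud\langle X\rangle^c_s]$ that does not vanish automatically as $\eps\downarrow 0$. The paper applies the Meyer--It\^o formula to $|Y^i|$ coordinate by coordinate, localises via $\tau_R$ (needed both to kill the stochastic integral and to have $|Y^i|\le 2R$), and then invokes the lemma to control the local time that appears. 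The point you are missing is how the $\tfrac1\eps$-term is tamed: this is where Assumption \ref{ass:gen1}(ii), namely $\kappa^i(x)=\kappa^i(x_i)$, is essential. It gives $(\kappa^i(X^{\gamma,R;i}_\lambda)-\kappa^i(X^{R;i}_\lambda))^2\le D_1^2|Y^i_\lambda|^2$, depending on the \emph{same} coordinate that appears in the indicator $\{Y^i_\lambda>\eps\}$. One then splits the integral over $\{Y^i\in(\eps,\eps^\zeta)\}$ and $\{Y^i\ge\eps^\zeta\}$ with $\zeta\in(\tfrac12,1)$: on the first set $|Y^i|^2\le\eps^{2\zeta}$ beats $\tfrac1\eps$, on the second the exponential $e^{1-\eps^{\zeta-1}}$ crushes everything. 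Without the diagonal structure of $\kappa$ you would have $|Y|_d^2$ instead of $|Y^i|^2$, and the first region would not be controllable.

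After letting $\eps\downarrow 0$ one arrives at $\E_x[|Y^R_s|_d]\le 5dD_1\int_0^s\E_x[|Y^R_\lambda|_d]\,\ud\lambda+3d\gamma\,\E_x[\nu_s]$, and Gronwall plus Fatou (for $R\to\infty$) finish the job. So the outline is right, but the sentence ``the conclusion then follows from Lemma \ref{lem:DGI_jumps}'' hides precisely the step where the structural hypothesis on $\kappa$ is used and where the work lies.
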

\begin{proof}
For each pair $(n,\nu)\in\cA^{d}$, setting $n_s=n(s)=(n_{[d_0]}(s),n_{[d_1]}(s))\in\R^{d_0}\times\R^{d_1}$, we can define a pair $(\bar{n},\bar{\nu})\in \cA^{d_0}$ as follows: for $i=1,\ldots, d_0$, we set 
\begin{equation}\label{eq:barnuhbard_1}
\begin{split}
&\bar{n}^i_{s}=\begin{cases}&\frac{n^i_{s}}{|n_{[d_0]}(s)|_{d_0}},\qquad \text{if}\ |n_{[d_0]}(s)|_{d_0}\neq0,\\ 
&(1,0,\ldots, 0),\quad \text{if}\ |n_{{[d_0]}}(s)|_{d_0}=0;
\end{cases}\\
&\bar{\nu}_s=\int_0^s\!|n_{[d_0]}(r)|_{d_0}\,\ud\nu_r,
\end{split}
\end{equation}
and $\bar{n}^i_s = 0$, $i=d_0+1, \ldots, d$. By construction the process $(\bar{n}_s)_{s\in[0,T]}\in\R^{d}$ is progressively measurable, hence $\bar\nu$ is adapted, right-continuous and non-decreasing with 
\begin{align}\label{eq:nubar}
\int_0^s \bar{n}^i_{r}\ud \bar{\nu}_r=\int_0^s n^i_{r}\ud \nu_r
\end{align}
for all $s\in[0,T]$ and $i=1,\ldots, d_0$.
 
For $\tau\in\cT_t$ and $(n,\nu)\in\cA^{d}$, let
\begin{align*}
\tau_R\coloneqq \inf\big\{s\geq 0\,\big|\, |X_{s}^{[n,\nu],\gamma}|_d\vee|X^{[\bar n,\bar \nu]}_{s}|_d\geq R\big\}\wedge T.
\end{align*}
Denote the stopped processes $(X^{[n,\nu],\gamma}_{s\wedge\tau\wedge\tau_R})_{s\in[0,T]}$ and $(X^{[\bar n,\bar \nu]}_{s\wedge\tau\wedge\tau_R})_{s\in[0,T]}$ by $(X^{\gamma,R}_s)_{s \in[0,T]}$ and $(X^{R}_s)_{s \in[0,T]}$, respectively. Let $J^{\,\gamma,R}\coloneqq X^{\gamma,R}-X^{R}$ and notice that $J^{\,\gamma,R}$ is a c\`adl\`ag semimartingale. To further simplify notation we set $J=J^{\,\gamma,R}$ for as long as $\gamma$ and $R$ are fixed. For each $i=1,\ldots,d$ we denote the $i$-th coordinate of $J$ by $J^{i}$. By Meyer-It\^o formula for semimartingales (see\footnote{In \cite{protter2005stochastic}, the author considers a c\`adl\`ag semi-martingale $X$ starting from $X_0=x$, whereas here we have $X_{0-}=x$. Thus, we must account for a possible jump at time zero when using \cite[Thm.\ IV.70]{protter2005stochastic}.} \cite[Thm.\ IV.70]{protter2005stochastic}), noting that $J_{0-}=0$ and that the jump part of the process $J$ is of bounded variation, we have for $s>0$ and for $i=1,\ldots, d$, 
\begin{align}\label{eq:localtime}
|J^i_{s}|=&\,\int_{[0,s\wedge\tau\wedge\tau_R]}\!\sign(J^i_{\lambda-})\,\ud J^{i,c}_{\lambda}+L_{s\wedge\tau\wedge\tau_R}^0(J^i)+\sum_{0\le \lambda\leq s\wedge\tau\wedge\tau_R}\big(|J^i_{\lambda}|-|J^i_{\lambda-}|\big)
\end{align}
where $J^{i,c}$ is the continuous part of the process $J^i$, $\sign(y)=-1$ for $y\le 0$ and $\sign(y)=1$ for $y>0$. The process $(L_t^0(J^i))_{t\ge 0}$ is the semi-martingale local time at zero of $(J^i_t)_{t\ge 0}$. 

Fix $i=1,\ldots, d_0$. Notice that $J^{i}_{\lambda}=J^{i}_{\lambda-}$ for all $\lambda\ge 0$ because of \eqref{eq:nubar}. Thus, using the form of the dynamics of $X^{\gamma,R}$ and $X^{R}$, we have
\begin{align*}
|J^{i}_{s}|=&\,\int_0^{s\wedge\tau\wedge\tau_R}\!\sign(J^{i}_{\lambda})\big(b^i(X_\lambda^{\gamma,R})-b^i(X_\lambda^{R})\big)\,\ud \lambda\\
&\,+\int_0^{s\wedge\tau\wedge\tau_R}\!\sign(J^{i}_{\lambda})\big(\kappa^i(X_{\lambda}^{\gamma,R;i})-\kappa^i(X_{\lambda}^{R;i})\big)\,\ud W_\lambda+L_{s\wedge\tau\wedge\tau_R}^0(J^i),
\end{align*}
where we notice that in the diffusion coefficient of $|J^i|$, the functions $\kappa^i$ depend only on the $i$-th coordinate $X^{\gamma,R;i}$ and $X^{R;i}$, as per (ii) in Assumption \ref{ass:gen1}. Taking expectation in the equation above and removing the martingale term ($\kappa^i$ has a linear growth so it is bounded on compacts) we get
\begin{align}\label{eq:pre_DGIlemma}
\E_x\big[|J^{i}_{s}|\big]=&\,\E_x\Big[\int_0^{s\wedge\tau\wedge\tau_R}\!\sign(J^{i}_{\lambda})\big(b^i(X_\lambda^{\gamma,R})-b^i(X_\lambda^{R})\big)\,\ud \lambda+L_{s\wedge\tau\wedge\tau_R}^0(J^{i})\Big]\notag\\
\leq &\,\E_x\Big[\int_0^{s}\!\big|b^i(X_\lambda^{\gamma,R})-b^i(X_\lambda^{R})\big|\,\ud \lambda+L_{s}^0(J^{i})\Big]\\
\leq&\E_x\Big[D_1\int_0^{s}\!\big|J_\lambda|_d\,\ud \lambda+L_{s}^0(J^{i})\Big],\notag
\end{align}
where in the first inequality we extend the integrals up to time $s$ and for the second one we use Lipschitz continuity of $b^i$ with the constant $D_1$ from Assumption \ref{ass:gen1}. In order to estimate the local time, we follow \cite[Lem.\ 5.1]{deangelis2019numerical}, which we can apply because $J^{i}$ is a continuous semimartingale: for arbitrary $\eps\in(0,1)$
\begin{align}\label{eq:DGIlemma}
\E_x\big[L_{s}^0(J^{i})\big]\leq&\, 4\eps-2\E_x\Big[\int_0^{s}\!\Big(\mathds{1}_{\{J^i_{\lambda}\in[0,\eps)\}}+\mathds{1}_{\{J^i_{\lambda}\geq \eps\}}e^{1-\frac{J^i_{\lambda}}{\eps}}\Big)\big(b^i(X_\lambda^{\gamma,R})-b^i(X_\lambda^{R})\big)\,\ud \lambda\Big] \notag\\
&\,+\frac{1}{\eps}\E_x\Big[\int_0^{s}\!\mathds{1}_{\{J^i_{\lambda}>\eps\}}e^{1-\frac{J^i_{\lambda}}{\eps}}\big(\kappa^i(X_{\lambda}^{\gamma,R;i})-\kappa^i(X_{\lambda}^{R;i})\big)^2\,\ud \lambda\Big].
\end{align}
In order to estimate the final term above we are going to use that 
\begin{align}\label{eq:lips}
\big(\kappa^i(X_{\lambda}^{\gamma,R;i})-\kappa^i(X_{\lambda}^{R;i})\big)^2 \le D_1^2 \big|J^i_\lambda\big|^2 \le 2R D_1^2\big|J^i_\lambda\big|, 
\end{align}
because $\kappa$ is Lipschitz by (i) in Assumption \ref{ass:gen1} and $|J^{i}|\le|J^{\gamma,R}|_d\le 2R$. Denote by $I_\eps$ the last expectation on the right-hand side of \eqref{eq:DGIlemma} and pick $\zeta\in(\tfrac{1}{2},1)$. We have
\begin{align*}
I_\eps=&\,\frac{1}{\eps}\E_x\Big[\int_0^{s}\!\mathds{1}_{\{J^i_{\lambda}\in(\eps,\eps^\zeta)\}}e^{1-\frac{J^i_{\lambda}}{\eps}}\big(\kappa^i(X_{\lambda}^{\gamma,R;i})-\kappa^i(X_{\lambda}^{R;i})\big)^2\,\ud \lambda\Big]\\
&\,+\frac{1}{\eps}\E_x\Big[\int_0^{s}\!\mathds{1}_{\{J^{i}_{\lambda}\geq\eps^\zeta\}}e^{1-\frac{J^{i}_{\lambda}}{\eps}}\big(\kappa^i(X_{\lambda}^{\gamma,R;i})-\kappa^i(X_{\lambda}^{R;i})\big)^2\,\ud \lambda\Big]\\
\leq&\,\frac{1}{\eps}\E_x\Big[D_1^2\int_0^{s}\! \mathds{1}_{\{J^{i}_{\lambda} \in(\eps, \eps^\zeta)\}} |J^{i}_{\lambda}|^2\,\ud \lambda+e^{1-\eps^{\zeta-1}}\int_0^{s}\!\mathds{1}_{\{J^{i}_{\lambda}\geq\eps^\zeta\}}\big(\kappa^i(X_{\lambda}^{\gamma,R;i})-\kappa^i(X_{\lambda}^{R;i})\big)^2\,\ud \lambda\Big]\\
\leq&\,D_1^2\eps^{2\zeta-1}T+\frac{2RD_1^2}{\eps}e^{1-\eps^{\zeta-1}}\E_x\Big[\int_0^{s}\!|J^{i}_{\lambda}|\,\ud \lambda\Big],
\end{align*}
where we use \eqref{eq:lips} and the bounds 
\[
e^{1-\frac{J^{i}_{\lambda}}{\eps}}\mathds{1}_{\{J^{i}_{\lambda}\in(\eps,\eps^\zeta)\}}\leq 1\quad\text{and}\quad e^{1-\frac{J^{i}_{\lambda}}{\eps}}\mathds{1}_{\{J^{i}_{\lambda}\geq \eps^\zeta\}}\leq e^{1-\eps^{\zeta-1}}.
\]
Thanks to the Lipschitz continuity of $b$, we bound the first expectation on the right-hand side of \eqref{eq:DGIlemma} by
\[
4D_1\E_x\Big[\int_0^s |J_\lambda|_d\ud \lambda\Big].
\]
Combining those upper bounds we obtain 
\begin{align}\label{eq:DGIlemma_1}
\E_x\big[L_{s}^0(J^{i})\big]\leq 4\eps\!+\!\Big(4D_1\!+\!\frac{2RD_1^2}{\eps}e^{1-\eps^{\zeta-1}}\Big)\E_x\Big[\int_0^{s}\!|J_\lambda|_d\,\ud \lambda\Big]\!+\!D_1^2\eps^{2\zeta-1}T.
\end{align}
We insert this bound into \eqref{eq:pre_DGIlemma} and obtain the following estimate:
\begin{align}\label{eq:DGIlemmahbar}
\E_x\big[|J^i_{s}|\big]\leq&\, 4\eps\!+\!\Big(5D_1\!+\!\frac{2RD_1^2}{\eps}e^{1-\eps^{\zeta-1}}\Big)\E_x\Big[\int_0^{s}\!|J_\lambda|_d\,\ud \lambda\Big]\!+\!D_1^2\eps^{2\zeta-1}T,
\end{align}
for $i=1,\ldots, d_0$. 

Coordinates $J^i$ for $i=d_0+1,\ldots, d$ are estimated slightly differently. From \eqref{eq:localtime}
\begin{align}\label{eq:DGI_jumps}
|J^{i}_{s}|=&\,\int_0^{s}\!\sign(J^i_{\lambda})\big(b^i(X_\lambda^{\gamma,R})-b^i(X_\lambda^{R})\big)\,\ud \lambda+\int_0^{s}\!\sign(J^{i}_{\lambda})\big(\kappa^i(X_{\lambda}^{\gamma,R;i})-\kappa^i(X_{\lambda}^{R;i}))\,\ud W_\lambda\notag\\
&\,+\gamma \int_{0}^{s}\! \sign(J^{i}_{\lambda-})n^i_{\lambda-} \,\ud \nu^c_{\lambda}+L_{s}^0(J^i_{\lambda})+\sum_{0\le \lambda\leq s}\big(|J^i_{\lambda}|-|J^i_{\lambda-}|\big),
\end{align}
where $\nu^c$ is the continuous part of the process $\nu$.
Notice that 
\begin{align}\label{eq:propjumpsnu1}
|J^i_{\lambda}|=&\,|J^i_{\lambda-}+\gamma n^i_{\lambda}\Delta \nu_\lambda|\leq |J^i_{\lambda-}|+\gamma\Delta \nu_\lambda,
\end{align}
which implies  
\begin{align}\label{eq:propjumpsnu2}
\gamma \int_{0}^{s}\! \sign(J^i_{\lambda})n^i_{\lambda} \,\ud \nu_{\lambda}^c+\sum_{0\le \lambda\leq s}\big(|J^i_{\lambda}|-|J^i_{\lambda-}|\big)\leq \gamma\nu_s.
\end{align}
Thus, we get from \eqref{eq:DGI_jumps} the inequality:
\begin{align*}
|J^i_{s}|\leq&\,\int_0^{s}\!\sign(J^i_{\lambda})\big(b^i(X_\lambda^{\gamma,R})-b^i(X_\lambda^{R})\big)\,\ud \lambda+\int_0^{s}\!\sign(J^i_{\lambda})\big(\kappa^i(X_{\lambda}^{\gamma,R;i})-\kappa^i(X_{\lambda}^{R;i}))\,\ud W_\lambda\\
&\,+\gamma \nu_s+L_{s}^0(J^i_{\lambda}).
\end{align*}

Since $J^i$ may have jumps, the upper bound on the local time \cite[Lemma\ 5.1]{deangelis2019numerical} does not apply. Additional terms appear as detailed in Lemma \ref{lem:DGI_jumps} in Appendix. Thus, we obtain
\begin{align}\label{eq:DGIlemmaj}
\E_x\big[L_{s}^0(J^{i})\big]\leq&\, 4\eps-2\E_x\Big[\int_0^{s}\!\Big(\mathds{1}_{\{J^i_{\lambda}\in[0,\eps)\}}+\mathds{1}_{\{J^i_{\lambda}\geq \eps\}}e^{1-\frac{J^i_{\lambda}}{\eps}}\Big)\big(b^i(X_\lambda^{\gamma,R})-b^i(X_\lambda^{R})\big)\,\ud \lambda\Big]\notag\\
&\,-2\E_x\Big[\int_0^{s}\!\Big(\mathds{1}_{\{J^i_{\lambda}\in[0,\eps)\}}+\mathds{1}_{\{J^i_{\lambda}\geq \eps\}}e^{1-\frac{J^i_{\lambda}}{\eps}}\Big)\gamma n^i_{\lambda}\,\ud \nu_\lambda^c\Big]\\
&\,+\E_x\Big[\frac{1}{\eps}\int_0^{s}\!\mathds{1}_{\{J^i_{\lambda}>\eps\}}e^{1-\frac{J^i_{\lambda}}{\eps}}\big(\kappa^i(X_{\lambda}^{\gamma,R;i})-\kappa^i(X_{\lambda}^{R;i})\big)^2\,\ud \lambda+2\gamma\sum_{0\le \lambda\leq s}\Delta\nu_{\lambda}\Big].\notag
\end{align}

Repeating the same arguments as those we used to obtain \eqref{eq:DGIlemmahbar} and additionally noticing that 
\begin{align*}
\Big|\E_x\Big[\int_0^{s}\!\Big(\mathds{1}_{\{J^i_{\lambda}\in[0,\eps)\}}+\mathds{1}_{\{J^i_{\lambda}\geq \eps\}}e^{1-\frac{J^i_{\lambda}}{\eps}}\Big)\gamma n^i_{\lambda}\,\ud \nu_\lambda^c\Big]\Big|+\E_x\Big[\gamma\sum_{0\le \lambda\leq s}\Delta\nu_{\lambda}\Big]\le \gamma \E_x[\nu_s]
\end{align*}
yields
\begin{align}\label{eq:DGIlemmahbar+1}
\E_x\big[|J^i_{s}|\big]\leq&\, 4\eps+\Big(5D_1+\frac{2RD_1^2}{\eps}e^{1-\eps^{\zeta-1}}\Big)\E_x\Big[\int_0^{s}\!|J_\lambda|_d\,\ud\lambda\Big]+D_1^2\eps^{2\zeta-1}T +3\gamma\E_x\big[\nu_{s}\big].
\end{align}
Now, combining \eqref{eq:DGIlemmahbar} and \eqref{eq:DGIlemmahbar+1} we have
\begin{align*}
\E_x\big[|J_s|_d\big]\leq&\,\sum_{i=1}^d\E_x\big[|J^i_{s}|\big]\\
\leq&\, 4d\eps+d\Big(5D_1+\frac{2RD_1^2}{\eps}e^{1-\eps^{\zeta-1}}\Big)\E_x\Big[\int_0^{s}\!|J_\lambda|_d\,\ud \lambda\Big]+d D_1^2\eps^{2\zeta-1}T +3d\gamma\E_x\big[\nu_{s}\big].
\end{align*}
Sending $\eps\downarrow0$ and recalling now that $J=J^{\gamma,R}$ and $\zeta\in(\frac12,1)$, we get
\begin{align*}
\E_x\big[|J^{\,\gamma,R}_s|_d\big]\leq&\, 5dD_1\E_x\Big[\int_0^{s}\!|J^{\,\gamma,R}_\lambda|_d\,\ud \lambda\Big] + 3d\gamma\E_x\big[\nu_{s}\big].
\end{align*}
By Gronwall's lemma, there is a constant $K_3>0$, depending only on $d$, $D_1$ and $T$, such that 
\begin{align*}
\E_x\big[|J^{\,\gamma,R}_s|_d\big]\leq& \gamma \,K_3\, \E_x\big[\nu_{T-t}\big], \quad\text{for any $s\in[0,T-t]$}.
\end{align*}
Passing to the limit as $R\to\infty$ and using Fatou's lemma, we get
\begin{align*}
\E_x\Big[|X_{s\wedge\tau}^{[n,\nu],\gamma}-X_{s\wedge\tau}^{[\bar{n},\bar{\nu}]}|_d\Big]\leq\liminf_{R\to\infty} \E_x\Big[|J^{\,\gamma,R}_s|_d\Big]\leq \gamma K_3 \E_x\big[\nu_{T-t}\big], \quad\text{for any $s\in[0,T-t]$.}
\end{align*}
Hence, the proof is completed by setting $s=T-t$ and recalling that $\tau\le T-t$.
\end{proof}
Another lemma of a similar nature allows us to compare the dynamics induced by a generic control $(n,\nu)\in\cA^{d_0}$ to its uncontrolled counterpart.
\begin{lemma}\label{lem:stability}
Fix $(t,x)\in\R^{d+1}_{0,T}$. Let $(n,\nu)\in\cA^{d_0}$ and $\tau\in\cT_t$. Then
\[
\E_x[|X^{[n,\nu]}_\tau-X^{[e_1,0]}_\tau|_d]\le K_3\E_x[\nu_{T-t}],
\]
with the same constant $K_3>0$ as in Proposition \ref{thm:convgamma0}.
\end{lemma}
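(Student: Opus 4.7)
The plan is to mirror the argument used for Proposition \ref{thm:convgamma0}, specialised to the setting where the control $(n,\nu)\in\cA^{d_0}$ has no direct action on the last $d_1$ coordinates. Fix $R>0$, set
\[
\tau_R\coloneqq\inf\{s\ge 0\,|\,|X^{[n,\nu]}_s|_d\vee|X^{[e_1,0]}_s|_d\ge R\}\wedge T,
\]
and define $J_s\coloneqq X^{[n,\nu]}_{s\wedge\tau\wedge\tau_R}-X^{[e_1,0]}_{s\wedge\tau\wedge\tau_R}$. As in the proof of Proposition \ref{thm:convgamma0}, apply the Meyer-It\^o formula coordinate-wise. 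Assumption \ref{ass:gen1}(ii) ensures that the diffusion coefficient of $J^i$ depends only on the $i$-th coordinates of the two processes, which is what makes the local-time argument viable.

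For each $i=1,\ldots, d_0$ the dynamics of $J^i$ carries the control increment $n^i_s\,\ud\nu_s$ (the uncontrolled counterpart contributes nothing), so $J^i$ is a c\`adl\`ag semimartingale with jumps bounded by $\Delta\nu_s$. Using the generalised local-time inequality (Lemma \ref{lem:DGI_jumps}) in place of \cite[Lem.\ 5.1]{deangelis2019numerical}, exactly as in the derivation of \eqref{eq:DGIlemmahbar+1} but with $\gamma=1$, I obtain
\[
\E_x[|J^i_s|]\le 4\eps+\Bigl(5D_1+\tfrac{2RD_1^2}{\eps}e^{1-\eps^{\zeta-1}}\Bigr)\E_x\!\int_0^s\!|J_\lambda|_d\,\ud\lambda+D_1^2\eps^{2\zeta-1}T+3\E_x[\nu_s],
\]
for $\eps\in(0,1)$ and $\zeta\in(\tfrac12,1)$. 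For $i=d_0+1,\ldots, d$, the constraint $n^i\equiv 0$ in $\cA^{d_0}$ implies that $J^i$ is continuous, so the original \cite[Lem.\ 5.1]{deangelis2019numerical} applies and yields the same bound without the $3\E_x[\nu_s]$ term, in analogy with \eqref{eq:DGIlemmahbar}.

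Summing these estimates over $i=1,\ldots,d$, sending $\eps\downarrow 0$ (so that $4d\eps$ and $dD_1^2\eps^{2\zeta-1}T$ vanish thanks to $\zeta>\tfrac12$), and applying Gronwall's lemma on $[0,T-t]$ produces
\[
\E_x\bigl[|J_{s}|_d\bigr]\le K_3\,\E_x[\nu_{T-t}],\qquad s\in[0,T-t],
\]
with the same $K_3$ as in Proposition \ref{thm:convgamma0} (it depends only on $d$, $D_1$, $T$; at worst one absorbs $3d_0\le 3d$ into the bound to match the earlier constant). Finally, letting $R\to\infty$ and invoking Fatou's lemma gives the claim with $s=T-t$, since $\tau\le T-t$. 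No new obstacles arise beyond carefully tracking which coordinates produce a $\nu$-contribution; the two cases handled inside Proposition \ref{thm:convgamma0} already contain all the analytical ingredients needed here.
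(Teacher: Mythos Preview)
Your proposal is correct and follows essentially the same approach as the paper's own proof: both localise via $\tau_R$, apply the Meyer--It\^o formula coordinate-wise, use the jump version of the local-time estimate (Lemma \ref{lem:DGI_jumps}) on the controlled coordinates $i=1,\ldots,d_0$ (with $\gamma=1$) and the continuous version on $i=d_0+1,\ldots,d$, then sum, let $\eps\downarrow 0$, apply Gronwall and Fatou. The only cosmetic difference is that the paper phrases the two cases by direct reference to \eqref{eq:DGIlemmahbar+1} and \eqref{eq:DGIlemmahbar} without further elaboration, whereas you spell out why the roles of the two blocks of coordinates are swapped relative to Proposition \ref{thm:convgamma0}.
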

\begin{proof}
Similarly as in the proof of Proposition \ref{thm:convgamma0}, we denote $X=X^{[n,\nu]}$ and $X^0=X^{[e_1,0]}$, and define
\begin{align*}
\tau_R\coloneqq \inf\big\{s\geq 0\,\big|\, |X_{s}|_d\vee|X^0_{s}|_d\geq R\big\}\wedge T.
\end{align*}
We denote the two processes $(X_{t\wedge\tau\wedge\tau_R})_{t\in[0,T]}$ and $(X^0_{t\wedge\tau\wedge\tau_R})_{t\in[0,T]}$ by $X^{R}$ and $X^{0,R}$, respectively. Let $J^{R}\coloneqq X^{R}-X^{0,R}$ and notice that $J^{R}$ is a c\`adl\`ag semimartingale. To further simplify notation we set $J=J^{R}$ for as long as $R$ is fixed. For each $i=1,\ldots,d$ we denote the $i$-th coordinate of $J$ by $J^{i}$. Now, for $i=1,\ldots, d_0$ repeating verbatim, for $\gamma = 1$, the same arguments as in the proof of \eqref{eq:DGIlemmahbar+1} we obtain
\begin{align*}
\E_x\big[|J^i_{s}|\big]\leq&\, 4\eps+\Big(5D_1+\frac{2RD_1^2}{\eps}e^{1-\eps^{\zeta-1}}\Big)\E_x\Big[\int_0^{s}\!|J_\lambda|_d\,\ud\lambda\Big]+D_1^2\eps^{2\zeta-1}T +3\E_x\big[\nu_{s}\big].
\end{align*}
Instead, for $i=d_0+1,\ldots, d$, the same arguments that yield \eqref{eq:DGIlemmahbar} now give us
\begin{align*}
\E_x\big[|J^i_{s}|\big]\leq&\, 4\eps\!+\!\Big(5D_1\!+\!\frac{2RD_1^2}{\eps}e^{1-\eps^{\zeta-1}}\Big)\E_x\Big[\int_0^{s}\!|J_\lambda|_d\,\ud \lambda\Big]\!+\!D_1^2\eps^{2\zeta-1}T.
\end{align*}
Therefore the same conclusions as in Proposition \ref{thm:convgamma0} hold but with $\gamma=1$.	
\end{proof}

Combining the above results with Lemma \ref{lem:3.3} we obtain the following corollary.

\begin{corollary}\label{lem:L1estcntrSDE}
There is a constant $K_4>0$ such that $\E_x\big[|X_\tau^{[n,\nu]}|_d\big]\leq K_4(1+|x|_d)$ for any $(t,x)\in\R^{d+1}_{0,T}$, $(n,\nu)\in\cA^{d_0,opt}_{t,x}$ and $\tau\in\cT_t$.
\end{corollary}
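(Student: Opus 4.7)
The plan is to bound $\E_x[|X^{[n,\nu]}_\tau|_d]$ by comparing the controlled dynamics to the uncontrolled one and using the bound on $\E_x[\nu_{T-t}]$ that is built into the admissibility class $\cA^{d_0,opt}_{t,x}$.

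First, by the triangle inequality,
\begin{align*}
\E_x\big[|X^{[n,\nu]}_\tau|_d\big]
\le \E_x\big[|X^{[n,\nu]}_\tau - X^{[e_1,0]}_\tau|_d\big] + \E_x\big[|X^{[e_1,0]}_\tau|_d\big].
\end{align*}
The first term on the right-hand side is directly controlled by Lemma \ref{lem:stability} applied to the pair $(n,\nu)\in\cA^{d_0}\supseteq\cA^{d_0,opt}_{t,x}$ and the stopping time $\tau\in\cT_t$, yielding
\begin{align*}
\E_x\big[|X^{[n,\nu]}_\tau - X^{[e_1,0]}_\tau|_d\big] \le K_3\,\E_x[\nu_{T-t}] \le K_3 K_2 (1+|x|_d),
\end{align*}
where the last inequality uses the definition of $\cA^{d_0,opt}_{t,x}$ from Lemma \ref{lem:3.3}.

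For the second term, $X^{[e_1,0]}$ is the uncontrolled diffusion, whose coefficients $b$ and $\kappa$ are of linear growth by \eqref{eq:lingrowcff21}. Standard $L^2$-estimates for SDEs with linearly growing coefficients (e.g.\ \cite[Cor.\ 2.5.10]{krylov1980controlled}, already invoked in the proof of Lemma \ref{lem:3.3}) give
\begin{align*}
\E_x\big[|X^{[e_1,0]}_\tau|_d\big] \le \E_x\Big[\sup_{s\in[0,T]}|X^{[e_1,0]}_s|_d\Big] \le C(1+|x|_d)
\end{align*}
for a constant $C>0$ depending only on $T$ and $D_2$. Combining the two estimates we obtain the claim with $K_4 \coloneqq K_2K_3 + C$. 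No step of this argument is particularly delicate once Lemma \ref{lem:stability} and Lemma \ref{lem:3.3} are in place; the only point worth checking is that the class $\cA^{d_0,opt}_{t,x}$ is precisely the one for which the uniform bound on $\E_x[\nu_{T-t}]$ is available, which is exactly how it is defined.
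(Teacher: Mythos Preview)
Your proof is correct and follows essentially the same approach as the paper: split via the triangle inequality, bound the difference term using Lemma \ref{lem:stability} together with the definition of $\cA^{d_0,opt}_{t,x}$ from Lemma \ref{lem:3.3}, and bound the uncontrolled term via standard $L^2$-estimates for SDEs with linearly growing coefficients.
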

\begin{proof}
We observe that $\E_x\big[|X^{[n,\nu]}_\tau|_d\big]\leq \E_x\big[|X^{[n,\nu]}_\tau-X^{[e_1,0]}_\tau|_d\big]+\E_x\big[|X^{[e_1,0]}_\tau|_d\big]$. The first term is bounded using Lemma \ref{lem:stability} and \ref{lem:3.3}. Standard SDE estimates (\cite[Cor.\ 2.5.10]{krylov1980controlled}) give 
\[\E_x\big[|X^{[e_1,0]}_\tau|_d\big]\le \Big(\E_x\Big[\sup_{s\in[0,T]}|X^{[e_1,0]}_s|^2_d\Big]\Big)^{1/2}\le c(1+|x|_d),\]
for some $c>0$, thanks to Assumption \ref{ass:gen1}.
\end{proof}

\section{Convergence of the approximating problems}\label{sec:valgame}
In this section we first study the limit as $\gamma\to 0$ and then we relax the smoothness assumptions made in Assumption \ref{ass:gen3}. We observe that we could alternatively fix $\gamma$ and relax Assumption \ref{ass:gen3} before passing to the limit as $\gamma\to 0$. That approach motivates Remark \ref{rem:dd0}.

\subsection{Limits as \texorpdfstring{$\gamma\to 0$}{γ->0}}\label{sec:gammato0}
Throughout this subsection we enforce Assumptions \ref{ass:gen1} and \ref{ass:gen3}.
\begin{theorem}\label{thm:valgame_1}
The pointwise limit $u\coloneqq\lim_{\gamma\to 0}u^\gamma$ exists on $\R^{d+1}_{0,T}$. Moreover, $u$ coincides with the value of the game with payoff \eqref{eq:lowuppvfnc_1}, i.e., $u=\overline v=\underline v=v$, and there exists $C>0$ such that
\begin{align}\label{eq:convunifutheta_1}
|u^\gamma(t,x)-v(t,x)|\leq C(1+|x|_d)\gamma^{\frac{1}{2}},\quad \text{for all $(t,x)\in\R^{d+1}_{0,T}$}.
\end{align}
\end{theorem}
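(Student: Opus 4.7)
The plan is to sandwich $u^\gamma$ between $\underline v$ and $\overline v$ with error $O(\sqrt{\gamma}(1+|x|_d))$, which simultaneously yields both the existence of the value and the rate in \eqref{eq:convunifutheta_1}. The argument has two asymmetric directions: the upper bound on $u^\gamma$ uses only the elementary inclusion $\cA^{d_0}\subset\cA^d$ together with $f^\gamma-f\le\sqrt{\gamma}K$ (from $\sqrt{a+b}\le\sqrt{a}+\sqrt{b}$), whereas the lower bound on $u^\gamma$ requires the $L^1$-stability estimate of Proposition \ref{thm:convgamma0} and the construction \eqref{eq:barnuhbard_1}.

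\textbf{Step 1 (bound $u^\gamma\le\underline v+C_1\sqrt{\gamma}(1+|x|_d)$).} For every $(n,\nu)\in\cA^{d_0}$ the last $d_1$ components of $n$ vanish, so $n_s^\gamma=n_s$ and $X^{[n,\nu],\gamma}=X^{[n,\nu]}$. Hence
\[
\cJ_{t,x}^\gamma(n,\nu,\tau)-\cJ_{t,x}(n,\nu,\tau)=\E_x\Big[\int_{[0,\tau]}\!e^{-rs}\big(f^\gamma(t+s)-f(t+s)\big)\,\ud\nu_s\Big]\le \sqrt{\gamma} K\,\E_x[\nu_{T-t}].
\]
Applying Lemma \ref{lem:3.3} to restrict the infimum to $\cA^{d_0,opt}_{t,x}$ (where $\E_x[\nu_{T-t}]\le K_2(1+|x|_d)$) and using $\inf_{\cA^d}\le\inf_{\cA^{d_0,opt}_{t,x}}$ gives the claim.

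\textbf{Step 2 (bound $\overline v\le u^\gamma+C_2\gamma(1+|x|_d)$).} Fix $(n,\nu)\in\cA^{d,opt}_{t,x}$ and let $(\bar n,\bar\nu)\in\cA^{d_0}$ be as in \eqref{eq:barnuhbard_1}. The $g$- and $h$-differences between the two payoffs are controlled via the Lipschitz bound \eqref{eq:ghlipL_1}, Proposition \ref{thm:convgamma0} (and Fubini for the $h$-term), and Lemma \ref{lem:3.3}, for a total contribution of order $\gamma(1+|x|_d)$. For the cost term, $\ud\bar\nu_s=|n_{[d_0]}(s)|_{d_0}\,\ud\nu_s\le\ud\nu_s$ and $f\le f^\gamma$ yield pointwise $f(t+s)\,\ud\bar\nu_s\le f^\gamma(t+s)\,\ud\nu_s$. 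Combining these estimates produces
\[
\cJ_{t,x}(\bar n,\bar\nu,\tau)\le \cJ_{t,x}^\gamma(n,\nu,\tau)+C_2\gamma(1+|x|_d).
\]
The map $\Phi:\cA^d\to\cA^{d_0}$, $(n,\nu)\mapsto(\bar n,\bar\nu)$, acts as the identity on $\cA^{d_0}\subset\cA^d$ (since then $|n_{[d_0]}|_{d_0}=1$), hence $\Phi$ is surjective and $\inf_{(n,\nu)\in\cA^d}F(\Phi(n,\nu))=\inf_{\cA^{d_0}}F$ for any functional $F$. Taking $\sup_\tau$, then $\inf$ over $\cA^d$, and recalling $\overline u^\gamma=u^\gamma$ delivers the stated inequality.

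\textbf{Step 3 (conclusion).} Combining Steps 1 and 2 with $\underline v\le\overline v$ yields
\[
\overline v(t,x)-C_2\gamma(1+|x|_d)\le u^\gamma(t,x)\le \underline v(t,x)+C_1\sqrt{\gamma}(1+|x|_d),
\]
so $\overline v-\underline v\le C_3\sqrt{\gamma}(1+|x|_d)$. Sending $\gamma\downarrow 0$ gives $\overline v=\underline v=v$, and the sandwich estimate then reduces to \eqref{eq:convunifutheta_1}. The substantive technical work is already contained in Proposition \ref{thm:convgamma0}; the principal remaining obstacle is the bookkeeping in Step 2, specifically the measure-level comparison $f\,\ud\bar\nu\le f^\gamma\,\ud\nu$ and the verification that $\Phi$ is surjective so that no slack is introduced when passing from $\inf_{\cA^d}$ to $\inf_{\cA^{d_0}}$.
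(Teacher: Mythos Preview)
Your proof is correct and follows the same strategy as the paper: bound $u^\gamma-\underline v$ via the inclusion $\cA^{d_0}\subset\cA^d$ and $f^\gamma-f\le\sqrt\gamma K$, and bound $\overline v-u^\gamma$ via the map $(n,\nu)\mapsto(\bar n,\bar\nu)$ of \eqref{eq:barnuhbard_1} together with Proposition~\ref{thm:convgamma0} and $f\,\ud\bar\nu\le f^\gamma\,\ud\nu$; the paper does the same but phrases it with $\eta$-optimal strategies instead of manipulating $\sup$/$\inf$ directly. Two small points: in Step~2 the uniform error $C_2\gamma(1+|x|_d)$ is only valid on $\cA^{d,opt}_{t,x}$, so the infimum should be taken there (and Lemma~\ref{lem:3.3} invoked to recover $\overline u^\gamma$), not on all of $\cA^d$; and the surjectivity of $\Phi$ is a red herring---all you need is $\Phi(n,\nu)\in\cA^{d_0}$, which already gives $\sup_\tau\cJ_{t,x}(\Phi(n,\nu),\tau)\ge\overline v$ for each fixed $(n,\nu)$, hence for the infimum.
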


\begin{proof}
Let $u^\gamma$ be the value of the game described in Theorem \ref{thm:usolvar_ga}.  
We introduce $\underline{u}\coloneqq \liminf_{\gamma\to0} u^\gamma$ and $\overline{u}\coloneqq \limsup_{\gamma\to0} u^\gamma$. We want to prove that
\begin{align*}
\overline{u}(t,x)\leq \underline{v}(t,x) \quad\text{and}\quad \underline{u}(t,x)\geq \overline{v}(t,x),
\end{align*}
for all $(t,x)\in\R^{d+1}_{0,T}$, so that $\underline{u}=\overline{u}=\underline{v}=\overline{v}=v$ as claimed.

Fix $(t,x)\in\R^{d+1}_{0,T}$. We first prove that $\underline{u}\geq \overline{v}$. Let $(n,\nu)\in\cA^d$ be an $\eta$-optimal control for $u^{\gamma}(t,x)$, i.e., 
\[
\sup_{\sigma\in\cT}\cJ^\gamma_{t,x}(n,\nu,\sigma)\le u^\gamma(t,x)+\eta.
\]
With no loss of generality, thanks to Lemma \ref{lem:3.3} we can assume $(n,\nu)\in\cA^{d,opt}_{t,x}$. Consider the associated $(\bar{n},\bar{\nu})\in \cA^{d_0}$ constructed as in \eqref{eq:barnuhbard_1}. Recall the processes $X^{[n,\nu],\gamma}$ and $X^{[\bar{n},\bar{\nu}]}$ as in \eqref{eq:SDEgam} and \eqref{eq:prcXcntrll}, respectively. For notational simplicity, denote $X^\gamma = X^{[n,\nu],\gamma}$ and $X = X^{[\bar{n},\bar{\nu}]}$. Let $\tau\in\cT_t$ be an $\eta$-optimal stopping time for $\sup_{\sigma \in \cT_t} \cJ_{t,x} (\bar{n}, \bar{\nu}, \sigma)$, which implies $\overline{v}(t,x) \le \cJ_{t,x} (\bar{n}, \bar{\nu}, \tau) + \eta$. We have
\begin{align*}
&\,u^\gamma(t,x)-\overline{v}(t,x)\\
&\geq \cJ_{t,x}^\gamma(n,\nu,\tau)-\cJ_{t,x}(\bar{n},\bar{\nu},\tau)-2\eta\notag\\
&=\E_x\Big[e^{-r\tau}\Big(g(t+\tau,X_{\tau}^{\gamma})-g(t+\tau,X_{\tau})\Big)+\int_0^\tau\!e^{-rs}\big(h(t+s,X_{s}^{\gamma})-h(t+s,X_{s})\big)\,\ud s \notag\\
&\qquad+\int_{[0,\tau]}\!e^{-rs}f^\gamma(t+s)\,\ud \nu_s-\int_{[0,\tau]}\!e^{-rs}f(t+s)\,\ud \bar{\nu}_s\Big]-2\eta\notag\\
&\ge\E_x\Big[e^{-r\tau}\Big(g(t+\tau,X_{\tau}^{\gamma})-g(t+\tau,X_{\tau})\Big)+\int_0^\tau\!e^{-rs}\big(h(t+s,X_{s}^{\gamma})-h(t+s,X_{s})\big)\,\ud s\Big]-2\eta \notag\\
&\geq-K\E_x\Big[|X_{\tau}^{\gamma}-X_{\tau}|_d\Big]-K\E_x\Big[\int_0^{T-t}\!|X_{s}^{\gamma}-X_{s}|_d\,\ud s \Big]-2\eta\notag\\
&\geq -K\E_x\Big[|X_{\tau}^{\gamma}-X_{\tau}|_d\Big]-KT\sup_{s\in[0,T-t]}\E_x\Big[|X_{s}^{\gamma}-X_{s}|_d\Big] -2\eta\notag
\end{align*}
where $K>0$ is the same as in \eqref{eq:ghlipL_1}. The first inequality is by the choice of $(n,\nu)$ and $\tau$. The second inequality holds because by the definition of $\bar{\nu}$ in \eqref{eq:barnuhbard_1} we have $\tfrac{\ud \bar{\nu}_s(\omega)}{\ud\nu_s(\omega)}=|n_{[d_0]}(s,\omega)|_{d_0}\leq 1$ for all $(s,\omega)\in\R_+\times\Omega$ and $f^\gamma\geq f$ by \eqref{eq:defnfgamma}. The third inequality is by the Lipschitz continuity of $g$ and $h$, and the final one is by Fubini's theorem. Using Proposition \ref{thm:convgamma0} combined with $\E_x[\nu_{T-t}]\leq K_2(1+|x|_d)$ from Lemma \ref{lem:3.3} we have that
\begin{align}\label{eq:ugliminf}
u^\gamma(t,x)-\overline{v}(t,x)\geq -K(1+T)\gamma K_2K_3(1+|x|_d)-2\eta.
\end{align}
Taking the liminf as $\gamma\downarrow0$ we get
\begin{align*}
\underline{u}(t,x)-\overline{v}(t,x)\geq-2\eta.
\end{align*}
By the arbitrariness of $\eta$, we obtain $\underline{u}(t,x)\geq \overline{v}(t,x)$ as claimed.

We prove now that $\overline{u}(t,x)\leq \underline{v}(t,x)$. Let $\tau\in\cT_t$ be an $\eta$-optimal stopping time for $u^{\gamma}$, i.e.,
\[
\inf_{(n,\nu)\in\cA^d}\cJ^\gamma_{t,x}(n,\nu,\tau)\ge u^\gamma(t,x)-\eta.
\]
Let $(n,\nu)\in \cA^{d_0}$ be an $\eta$-optimal control for 
\[
\inf_{(\hat{n}, \hat \nu) \in \cA^{d_0}} \cJ_{t,x}(\hat{n}, \hat \nu, \tau),
\] 
corresponding to $\tau$, i.e., $\underline{v}(t,x) \ge \cJ_{t,x}(n, \nu, \tau) - \eta$. Thanks to Lemma \ref{lem:3.3} we can assume without loss of generality that $(n,\nu)\in\cA^{d_0,opt}_{t,x}$. Notice that $(n,\nu)\in \mathcal{A}^{d_0,opt}_{t,x}\subset \cA^d$ is an admissible control in the game with value $u^\gamma$ and, moreover, 
\begin{align}\label{eq:indist}
X_{s}^{[n,\nu],\gamma}=X_{s}^{[n,\nu]}\quad\text{for all $s\in[0,T-t]$, $\P_x$-a.s.}
\end{align} 
Thus, using the above indistinguishability and recalling $f^\gamma\leq f+\sqrt{\gamma}K$ we easily obtain
\begin{align}\label{eq:uglimsup}
u^\gamma(t,x)-\underline{v}(t,x)\leq&\, \cJ_{t,x}^\gamma(n,\nu,\tau)-\cJ_{t,x}(n,\nu,\tau)+2\eta\\
=&\,\E_x\Big[\int_{[0,\tau_\gamma]}\!e^{-rs}\big(f^\gamma(t+s)-f(t+s)\big)\,\ud \nu_s\Big]+2\eta\notag\\
\leq&\, \sqrt{\gamma} K\E_x[\nu_{T-t}] +2\eta\leq \sqrt{\gamma} K K_2(1+|x|_d)+2\eta,\notag
\end{align}
where the final inequality is by Lemma \ref{lem:3.3}. Taking limsup as $\gamma\downarrow 0$ and thanks to the arbitrariness of $\eta$ we get $\overline{u}(t,x)\leq \underline{v}(t,x)$ as claimed. 

As a result, the pointwise limit $\lim_{\gamma\to0} u^\gamma$ is well-defined and $u\coloneqq\lim_{\gamma\to0} u^\gamma=\underline{v}=\overline{v}=v$. Combining \eqref{eq:ugliminf} and \eqref{eq:uglimsup} for $\gamma\in(0,1)$ yields \eqref{eq:convunifutheta_1}.
\end{proof}
Since $|\nabla u^\gamma|_\gamma\le f^\gamma$ for every $\gamma>0$, the next corollary holds.
\begin{corollary}\label{cor:lip}
The value function $v$ is Lipschitz in the first $d_0$ spatial coordinates with constant bounded by $f$, i.e., $|\nabla^0 v(t,x)|_{d_0}\le f(t)$ for a.e.\ $(t,x)\in\R^{d+1}_{0,T}$.
\end{corollary}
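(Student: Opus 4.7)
The plan is to lift the gradient constraint satisfied by the approximating value functions $u^\gamma$ through the uniform convergence $u^\gamma\to v$ established in Theorem \ref{thm:valgame_1}. Since Theorem \ref{thm:usolvar_ga} guarantees that each $u^\gamma$ solves the variational inequality \eqref{eq:VI} in $W^{1,2,p}_{\ell oc}(\R^{d+1}_{0,T})$, we have $|\nabla u^\gamma|_\gamma\le f^\gamma$ a.e.\ in $\R^{d+1}_{0,T}$. By the very definition of $|\cdot|_\gamma$,
\[
|\nabla^0 u^\gamma(t,x)|_{d_0}^2 \le |\nabla^0 u^\gamma(t,x)|_{d_0}^2+\gamma|\nabla^1 u^\gamma(t,x)|_{d_1}^2 = |\nabla u^\gamma(t,x)|_\gamma^2\le (f^\gamma(t))^2,
\]
so $|\nabla^0 u^\gamma|_{d_0}\le f^\gamma$ a.e.

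Next, I would convert this gradient bound into a Lipschitz estimate. For any fixed $t\in[0,T]$ and $x,y\in\R^d$ with $x_{[d_1]}=y_{[d_1]}$, standard integration along a segment contained in a slice of constant $x_{[d_1]}$ (permissible because $u^\gamma(t,\cdot)$ is absolutely continuous on lines in the first $d_0$ directions, being in $W^{1,2,p}_{\ell oc}$) yields
\[
|u^\gamma(t,x)-u^\gamma(t,y)|\le f^\gamma(t)\,|x-y|_d.
\]

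I would then pass to the limit $\gamma\downarrow 0$. Theorem \ref{thm:valgame_1} gives $u^\gamma(t,z)\to v(t,z)$ for every $(t,z)\in\R^{d+1}_{0,T}$, and by definition $f^\gamma(t)\to f(t)$ as $\gamma\downarrow 0$, uniformly in $t$. Hence for all $t\in[0,T]$ and all $x,y\in\R^d$ with $x_{[d_1]}=y_{[d_1]}$ we obtain
\[
|v(t,x)-v(t,y)|\le f(t)\,|x-y|_d.
\]

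Finally, I would conclude via Rademacher's theorem: for each fixed $t$ and each fixed value of $x_{[d_1]}$, the map $x_{[d_0]}\mapsto v(t,x_{[d_0]},x_{[d_1]})$ is Lipschitz with constant $f(t)$, hence differentiable Lebesgue-a.e.\ on $\R^{d_0}$ with $|\nabla^0 v(t,x)|_{d_0}\le f(t)$. Applying Fubini across $t$ and $x_{[d_1]}$ upgrades this to $|\nabla^0 v(t,x)|_{d_0}\le f(t)$ for a.e.\ $(t,x)\in\R^{d+1}_{0,T}$, as required. There is no real obstacle in this argument: the PDE machinery has already delivered the gradient bound at the level of $u^\gamma$, and the uniform convergence together with the elementary inequality $|\nabla^0 u^\gamma|_{d_0}\le |\nabla u^\gamma|_\gamma$ does the rest.
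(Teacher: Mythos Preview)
Your argument is correct and matches the paper's approach: the paper simply notes that $|\nabla u^\gamma|_\gamma\le f^\gamma$ for every $\gamma>0$ and deduces the corollary from this, while you have spelled out the passage to the limit via the Lipschitz estimate and Rademacher's theorem. The underlying idea is identical.
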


For $(n,\nu)\in \cA^{d_0}$, we recall stopping times
\begin{align*}
&\tau_*=\inf\{s\geq 0| v(t+s,X_{s}^{[n,\nu]})-g(t+s,X_{s}^{[n,\nu]})=0\},\\
&\sigma_*=\inf\{s\geq 0| v(t+s,X_{s-}^{[n,\nu]})-g(t+s,X_{s-}^{[n,\nu]})=0\},
\end{align*}
and 
\begin{equation}\label{eqn:theta_star}
\theta_*=\tau_*\wedge\sigma_*.
\end{equation}
\begin{lemma}\label{lem:convth}
Fix $(t,x)\in\R^{d+1}_{0,T}$. For any pair $(n,\nu)\in \cA^{d_0}$ we have
\[
\liminf_{\gamma\downarrow 0}\theta^\gamma_*\ge \theta_*,\qquad\text{$\P_x$-a.s.},
\]
where $\theta^\gamma_*$ is defined in \eqref{eq:thetagamma}.
\end{lemma}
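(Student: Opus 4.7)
The plan is to argue pathwise on a full-measure set. Since $(n,\nu)\in\cA^{d_0}$, the indistinguishability \eqref{eq:indist} gives $X^{[n,\nu],\gamma}=X^{[n,\nu]}$ for every $\gamma>0$; thus only $u^\gamma$ varies with $\gamma$, and we will leverage the uniform-on-compacts convergence $u^\gamma\to v$ of Theorem \ref{thm:valgame_1}. Fix $\omega$ in the full-measure set on which $s\mapsto X_s:=X^{[n,\nu]}_s(\omega)$ is c\`adl\`ag, write $\tau^\gamma_*,\sigma^\gamma_*,\theta^\gamma_*,\tau_*,\sigma_*,\theta_*$ for the corresponding values at $\omega$, and argue by contradiction: suppose $\theta':=\liminf_{\gamma\downarrow 0}\theta^\gamma_* <\theta_*$.

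Pick a subsequence $\gamma_k\downarrow 0$ with $\theta^{\gamma_k}_*\to\theta'$. By a routine extraction, we may assume either $\theta^{\gamma_k}_*=\tau^{\gamma_k}_*$ for all $k$ or $\theta^{\gamma_k}_*=\sigma^{\gamma_k}_*$ for all $k$; and, refining further, that the sequence $(\theta^{\gamma_k}_*)$ is eventually $\ge \theta'$ or eventually $\le\theta'$. All $X_{\theta^{\gamma_k}_*}$ and $X_{\theta^{\gamma_k}_*-}$ stay in a compact set depending on $\omega$ (as $\omega$ is fixed and paths are c\`adl\`ag on $[0,T]$), so the uniform bound \eqref{eq:convunifutheta_1} combined with the continuity of $v$ gives
\[
u^{\gamma_k}(t+s_k, y_k)\to v(t+\theta', y_\infty)\quad\text{whenever $s_k\to\theta'$ and $y_k\to y_\infty$,}
\]
and similarly for $g$. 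It therefore remains to identify the limit of $X_{\theta^{\gamma_k}_*}$ or $X_{\theta^{\gamma_k}_*-}$.

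This is the main step, and is handled by the c\`adl\`ag property. If $\theta^{\gamma_k}_*\downarrow\theta'$ (or $=\theta'$ eventually), right-continuity gives $X_{\theta^{\gamma_k}_*}\to X_{\theta'}$, and also $X_{\theta^{\gamma_k}_*-}\to X_{\theta'}$ when the approach is strictly from above, since for any $\eps>0$ there is $\delta>0$ with $|X_r-X_{\theta'}|<\eps$ for $r\in[\theta',\theta'+\delta)$, and $X_{s-}$ with $s\in(\theta',\theta'+\delta/2)$ is then within $\eps$ of $X_{\theta'}$. If instead $\theta^{\gamma_k}_*\uparrow\theta'$ strictly, the definition of left limits implies $X_{\theta^{\gamma_k}_*}\to X_{\theta'-}$ and $X_{\theta^{\gamma_k}_*-}\to X_{\theta'-}$ by the same two-$\eps$ argument.

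Passing to the limit in the identity defining $\theta^{\gamma_k}_*$ in each of the four combined cases yields either $v(t+\theta',X_{\theta'})=g(t+\theta',X_{\theta'})$ or $v(t+\theta',X_{\theta'-})=g(t+\theta',X_{\theta'-})$; in the first case $\theta'\ge\tau_*$ and in the second $\theta'\ge\sigma_*$, so in both $\theta'\ge\theta_*$, contradicting the standing assumption $\theta'<\theta_*$. The hard part is the convergence of $X_{\theta^{\gamma_k}_*}$ and $X_{\theta^{\gamma_k}_*-}$ without monotonicity of $\theta^{\gamma_k}_*$; this is precisely the motivation for having introduced $\sigma^\gamma_*$ (and hence $\theta^\gamma_*$) alongside $\tau^\gamma_*$ in \eqref{eq:thetagamma}, as emphasised in Remark \ref{rem:theta}: without the $\sigma^\gamma_*$ component one could not conclude in the subcases where the approach is from below and strict jumps of $X$ at $\theta'$ are possible.
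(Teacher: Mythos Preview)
Your argument is essentially correct and reaches the same conclusion, but it differs from the paper's proof and has one small gap worth noting.

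\textbf{The gap.} You write ``passing to the limit in the identity defining $\theta^{\gamma_k}_*$'', implicitly using that in the $\sigma$-case the equality $u^{\gamma_k}(t+\sigma^{\gamma_k}_*,X_{\sigma^{\gamma_k}_*-})=g(t+\sigma^{\gamma_k}_*,X_{\sigma^{\gamma_k}_*-})$ holds at the infimum. This is not automatic: $s\mapsto (u^{\gamma_k}-g)(t+s,X_{s-})$ is left-continuous, so the infimum over the zero set need not be attained. What saves you is that if it is \emph{not} attained, then the zero set is approached strictly from above and one shows (exactly as in the proof of Lemma \ref{lem:convst}) that $(u^{\gamma_k}-g)(t+\sigma^{\gamma_k}_*,X_{\sigma^{\gamma_k}_*})=0$, forcing $\tau^{\gamma_k}_*\le\sigma^{\gamma_k}_*$ and hence $\theta^{\gamma_k}_*=\tau^{\gamma_k}_*$; you are back in the $\tau$-case. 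So your extraction should really split into ``$\theta^{\gamma_k}_*=\tau^{\gamma_k}_*$'' versus ``$\theta^{\gamma_k}_*=\sigma^{\gamma_k}_*<\tau^{\gamma_k}_*$'', the latter being where the left-limit identity is guaranteed.

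\textbf{Comparison with the paper.} The paper avoids your four-way case analysis entirely. It sets $Z_s=(v-g)(t+s,X_s)$ and $Z^\gamma_s=(u^\gamma-g)(t+s,X_s)$, observes that $\theta^\gamma_*=\inf\{s\ge 0:\min(Z^\gamma_s,Z^\gamma_{s-})=0\}$ (and likewise for $\theta_*$), and argues directly: for $\delta<\theta_*(\omega)$, the map $s\mapsto\min(Z_s(\omega),Z_{s-}(\omega))$ is lower semi-continuous on $[0,\delta]$ and strictly positive there, hence attains a positive minimum $\lambda_{\delta,\omega}$. Uniform convergence of $u^\gamma$ to $v$ on a compact containing the path then gives $\min(Z^\gamma_s,Z^\gamma_{s-})\ge\lambda_{\delta,\omega}/2$ on $[0,\delta]$ for small $\gamma$, forcing $\theta^\gamma_*>\delta$. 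This is shorter and sidesteps both the subsequence extraction and the question of whether the defining infima are attained. Your contradiction approach, by contrast, explicitly tracks the limiting position of $X$ and makes transparent \emph{why} both $\tau_*$ and $\sigma_*$ are needed (your final paragraph); the paper's argument hides this inside the single function $\min(Z_s,Z_{s-})$.
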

\begin{proof}
Fix $(t,x)\in\R^{d+1}_{0,T}$ and take $(n,\nu)\in \cA^{d_0}$. Let
\[
Z_s=(v-g)(t+s,X^{[n,\nu]}_s)\quad \text{and}\quad Z^\gamma_s=(u^\gamma-g)(t+s,X^{[n,\nu]}_s).
\]
Since $(n,\nu)\in \cA^{d_0}$, we have $X^{[n, \nu]} \equiv X^{[n, \nu], \gamma}$, so $\theta^\gamma_* = \inf\{s \ge 0: \min(Z^\gamma_s, Z^\gamma_{s-}) = 0 \}$. Similarly, $\theta_* = \inf\{s \ge 0: \min(Z_s, Z_{s-}) = 0 \}$.

For $\omega\in\Omega$ such that $\theta_*(\omega)=0$ the claim in the lemma is trivial. Let $\omega\in\Omega$ be such that $\theta_*(\omega)>0$. Take arbitrary $\delta<\theta_*(\omega)$. Then, by the definition of $\theta_*$ we have
\begin{equation}\label{eqn:Z_positive}
\min(Z_s(\omega),Z_{s-}(\omega))>0 \quad\text{for all $s\in[0,\delta]$}.
\end{equation}
Furthermore,
\begin{align}\label{eq:infZ}
\inf_{0\le s\le \delta}\min(Z_s(\omega),Z_{s-}(\omega))=:\lambda_{\delta,\omega} > 0,
\end{align}
as the mapping $s\mapsto \min(Z_s(\omega),Z_{s-}(\omega))$ is lower semi-continuous so it attains its infimum on $[0, \delta]$.

Since $(n,\nu)$ is fixed and $\E_x[\nu_{T-t}^2]<\infty$ by definition of $ \cA^{d_0}$, for almost every $\omega$ there is a compact $K_{\delta,\omega}\subset\R^{d+1}_{0,T}$ that contains the trajectories 
\[
s\mapsto (t+s,X^{[n,\nu]}_s(\omega))\quad \text{and}\quad s\mapsto (t+s,X^{[n,\nu]}_{s-}(\omega))
\]
for $s\in[0,\delta]$. Then, uniform convergence of $u^\gamma$ to $v$ on $K_{\delta,\omega}$ (see \eqref{eq:convunifutheta_1}) yields
\[
\lim_{\gamma\to 0}\sup_{0\le s\le \delta}\big(|Z^\gamma_s(\omega)-Z_s(\omega)|+|Z^\gamma_{s-}(\omega)-Z_{s-}(\omega)|)=0.
\]
Hence, for all sufficiently small $\gamma>0$, \eqref{eq:infZ} yields
\[
\inf_{0\le s\le \delta}\min(Z^\gamma_s(\omega),Z^\gamma_{s-}(\omega))\ge\frac{\lambda_{\delta,\omega}}{2},
\]
which implies 
\[
\liminf_{\gamma\downarrow 0}\theta_*^\gamma(\omega)\ge \delta.
\]
We conclude that 
\[
\liminf_{\gamma\downarrow 0}\theta_*^\gamma(\omega)\ge\theta_*(\omega),
\]
because $\delta$ was arbitrary. The result holds for a.e.\ $\omega$, and the proof is complete.
\end{proof}

We will extract from the uniform convergence of $u^\gamma$ to $v$ and from Lemma \ref{lem:convth} the optimality of the stopping time $\theta_*$. This notion of optimality is discussed in detail in Remark \ref{rem:opt}.
\begin{theorem}\label{thm:opttaustar_1}
Fix $(t,x)\in\R^{d+1}_{0,T}$. We have 
\[
v(t,x) \le \cJ_{t,x}(n,\nu,\theta_*),
\]
for any $(n,\nu)\in \cA^{d_0}$, where we recall that $\theta_*=\theta_*(t,x;n,\nu)$ depends on the initial point and the control pair. Furthermore,
\[
v(t,x)=\inf_{(n,\nu)\in \cA^{d_0}}\cJ_{t,x}\big(n,\nu,\theta_*(t,x;n,\nu)\big),
\] 
hence $\theta_*$ is optimal for the stopper in the game with value $v$.
\end{theorem}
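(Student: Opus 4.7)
The plan is to combine the pointwise inequality from Lemma \ref{lem:convst}, the uniform convergence $u^\gamma\to v$ from Theorem \ref{thm:valgame_1}, and the lower semicontinuity $\liminf_\gamma \theta^\gamma_*\ge \theta_*$ from Lemma \ref{lem:convth}, by passing to the limit $\gamma\downarrow 0$. Fix $(n,\nu)\in\cA^{d_0}$; since the $\gamma$-weight in \eqref{eq:SDEgam} multiplies only coordinates where $n^i\equiv 0$, we have $X^{[n,\nu],\gamma}\equiv X^{[n,\nu]}$, and by Lemma \ref{lem:3.3} I may further assume $(n,\nu)\in\cA^{d_0,opt}_{t,x}$.

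The first step is to revisit the mollification/Dynkin argument of Lemma \ref{lem:convst} and run it with the stopping time $\theta_*\wedge\theta^\gamma_*$ in place of $\theta^\gamma_*$. Since $\theta_*\wedge\theta^\gamma_*\le \theta^\gamma_*$, the trajectory stays in the continuation region $\{u^\gamma>g\}$ on $[0,\theta_*\wedge\theta^\gamma_*)$, so the variational inequality $(\partial_t+\cL-r)u^\gamma+h\ge 0$ and the gradient bound $|\nabla u^\gamma|_\gamma\le f^\gamma$ both apply there. Omitting the substitution $u^\gamma=g$ at the terminal time (which was specific to $\theta^\gamma_*$), the argument produces
\begin{align*}
u^\gamma(t,x)\le \E_x\Big[&e^{-r(\theta_*\wedge\theta^\gamma_*)}u^\gamma\big(t+\theta_*\wedge\theta^\gamma_*, X^{[n,\nu]}_{(\theta_*\wedge\theta^\gamma_*)-}\big)\\
&+\int_0^{\theta_*\wedge\theta^\gamma_*}e^{-rs}h\,\ud s+\int_{[0,\theta_*\wedge\theta^\gamma_*)}e^{-rs}f^\gamma\,\ud\nu_s\Big].
\end{align*}

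Next I pass to the limit $\gamma\downarrow 0$. Lemma \ref{lem:convth} combined with $\theta_*\wedge\theta^\gamma_*\le \theta_*$ gives $\theta_*\wedge\theta^\gamma_*\to \theta_*$ a.s.\ from below, so the c\`adl\`ag property of $X^{[n,\nu]}$ yields $X^{[n,\nu]}_{(\theta_*\wedge\theta^\gamma_*)-}\to X^{[n,\nu]}_{\theta_*-}$. Combined with uniform convergence $u^\gamma\to v$ on compacts, $f^\gamma\to f$ uniformly, and boundedness of the integrands under Assumption \ref{ass:gen3}, dominated convergence delivers the analogous inequality for $v$. To turn $v(t+\theta_*, X_{\theta_*-})$ into the stopper's payoff I split on $\theta_*=\tau_*\wedge\sigma_*$: on $\{\sigma_*\le\tau_*\}$ the identity $v=g$ holds at $X_{\theta_*-}$ (by definition of $\sigma_*$) and Assumption \ref{ass:gen3}(iii) absorbs the jump, while on $\{\tau_*<\sigma_*\}$ the identity $v=g$ holds at $X_{\theta_*}$ and Corollary \ref{cor:lip} gives $v(t+\theta_*,X_{\theta_*-})\le v(t+\theta_*,X_{\theta_*})+f(t+\theta_*)\Delta\nu_{\theta_*}$. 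Either way the terminal jump is absorbed into the $\ud\nu$-integral, yielding $v(t,x)\le \cJ_{t,x}(n,\nu,\theta_*)$. Taking the infimum over $(n,\nu)\in\cA^{d_0}$ and combining with the trivial bound $\inf_{(n,\nu)}\cJ_{t,x}(n,\nu,\theta_*)\le\overline v=v$ then delivers the second assertion.

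The main obstacle is the rigorous re-execution of the mollification/Dynkin step of Lemma \ref{lem:convst} with $\theta_*\wedge\theta^\gamma_*$, which is not a hitting time of $\{u^\gamma=g\}$ and so forfeits the clean substitution used there; particular care is needed when passing to the limit across the atom of $\ud\nu$ at $\theta_*$ and in distinguishing $X_{s-}$ from $X_s$, which is precisely where the structure $\theta_*=\tau_*\wedge\sigma_*$ together with the $d_0$-Lipschitz bounds from Corollary \ref{cor:lip} and Assumption \ref{ass:gen3}(iii) become indispensable.
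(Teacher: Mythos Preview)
Your proposal is correct and follows essentially the same route as the paper: re-run the Dynkin/mollification inequality from Lemma \ref{lem:convst} at the stopping time $\theta_*\wedge\theta^\gamma_*$, use $|\nabla u^\gamma|_\gamma\le f^\gamma$ and the uniform estimate \eqref{eq:convunifutheta_1} to pass to the limit via Lemma \ref{lem:convth} and left-continuity of $s\mapsto X^{[n,\nu]}_{s-}$, and then split on $\{\sigma_*<\tau_*\}$ versus $\{\sigma_*\ge\tau_*\}$ using Corollary \ref{cor:lip} and the bound on $\nabla^0 g$. The only point to tighten is your claim that ``$v=g$ at $X_{\theta_*-}$ by definition of $\sigma_*$'': as in the proof of Lemma \ref{lem:convst}, this requires the short right-continuity argument showing that the infimum defining $\sigma_*$ is actually attained.
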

\begin{proof}
We follow an approach inspired by \cite[Thm.\ 4.12]{chiarolla2016hilbert} in optimal stopping. 
Notice that $(n,\nu)\in\cA^{d_0}\subset\cA^{d}$ and $X^{[n,\nu],\gamma}=X^{[n,\nu]}$, i.e., the processes are indistinguishable. Since $\theta_*^\gamma\wedge\theta_*\le\theta^\gamma_*$ it is not difficult to verify that \eqref{eq:ev0} continues to hold when we replace the pair $(t+\theta_*^\gamma,X_{\theta_*^\gamma}^{[n,\nu],\gamma})$ therein by $(t+\theta_*^\gamma\wedge\theta_*,X_{\theta_*^\gamma\wedge\theta_*}^{[n,\nu]})$. That is, we have
\begin{align}\label{eq:uv0}
u^\gamma(t,x)\le \E_x\Big[&e^{-r(\theta^\gamma_*\wedge\theta_*)}u^\gamma\big(t+\theta_*^\gamma\wedge\theta_*,X_{\theta_*^\gamma\wedge\theta_*-}^{[n,\nu]}\big)\!+\!\int_{0}^{\theta_*^\gamma\wedge\theta_*}\!\!\!e^{-rs}h(t+s,X_s^{[n,\nu]})\ud s \notag\\
&-\int_{0}^{\theta_*^\gamma\wedge\theta_*}\!\!e^{-rs}\langle\nabla u^\gamma(t\!+\!s,X_{s-}^{[n,\nu]}), n_s\rangle_\gamma\,\ud \nu_s^c\\
&\!-\! \sum_{s<\theta_*^\gamma\wedge\theta_*}\!\!e^{-rs}\!\!\int_{0}^{\Delta\nu_{s}}\!\!\langle\nabla u^\gamma(t\!+\!s,X_{s-}^{[n,\nu]}\!+\!\lambda n_s),n_s\rangle_\gamma\, \ud\lambda \Big].\notag
\end{align}
Further using that $|\nabla u^\gamma|_\gamma\le f^\gamma$ leads to 
\begin{align}\label{eq:convthetagam}
u^\gamma(t,x)\le&\, \E_x\Big[e^{-r(\theta^\gamma_*\wedge\theta_*)}u^\gamma\big(t+\theta_*^\gamma\wedge\theta_*,X_{\theta_*^\gamma\wedge\theta_*-}^{[n,\nu]}\big)\!+\!\int_{0}^{\theta_*^\gamma\wedge\theta_*}\!\!\!e^{-rs}h(t+s,X_s^{[n,\nu]})\ud s \notag\\
&\qquad+\int_{[0,\theta_*^\gamma\wedge\theta_*)}\!\!e^{-rs}f^\gamma(t+s)\,\ud \nu_s\Big]\\
\le&\, \E_x\Big[e^{-r(\theta^\gamma_*\wedge\theta_*)}v\big(t+\theta_*^\gamma\wedge\theta_*,X_{\theta_*^\gamma\wedge\theta_*-}^{[n,\nu]}\big)\!+\!\int_{0}^{\theta_*^\gamma\wedge\theta_*}\!\!\!e^{-rs}h(t+s,X_s^{[n,\nu]})\ud s \notag\\
&\qquad+\int_{[0,\theta_*^\gamma\wedge\theta_*)}\!\!e^{-rs}f^\gamma(t+s)\,\ud \nu_s\Big]+C\gamma^{1/2}\Big(1+\E_x\big[\big|X^{[n,\nu]}_{\theta^\gamma_*\wedge\theta_*}\big|_d\big]\Big),\notag
\end{align}
where in the second inequality we used \eqref{eq:convunifutheta_1}.

We now let $\gamma\downarrow 0$ and notice that $\theta^\gamma_*\wedge\theta_*\to \theta_*$ by Lemma \ref{lem:convth}. Since the mappings
\[
s\mapsto X^{[n,\nu]}_{s-}\quad\text{and}\quad s\mapsto \int_{[0,s)}e^{-ru}f(t+u)\ud \nu_u
\]
are left-continuous $\P_x$-a.s.\ and $\theta_*^\gamma\wedge\theta_*$ converges to $\theta_*$ from below (although not strictly from below), we can conclude that for a.e.\ $\omega\in\Omega$
\begin{align}\label{eq:leftcont}
\lim_{\gamma\to 0}X^{[n,\nu]}_{\theta_*^\gamma\wedge\theta_*-}=X^{[n,\nu]}_{\theta_*-}\quad\text{and}\quad\lim_{\gamma\to 0}\int_{[0,\theta_*^\gamma\wedge\theta_*)}e^{-rs}f^\gamma(t+s)\ud \nu_s=\int_{[0,\theta_*)}e^{-rs}f(t+s)\ud \nu_s.
\end{align}
Moreover, we can use dominated convergence thanks to, e.g., Corollary \ref{lem:L1estcntrSDE} and the identification $\cA^{d_0}=\cA^{d_0,opt}_{t,x}$ following Lemma \ref{lem:3.3}. That yields
\begin{align}\label{eq:vopt}
v(t,x)\le \E_x\Big[e^{-r\theta_*}v\big(t+\theta_*,X_{\theta_*-}^{[n,\nu]}\big)\!+\!\int_{0}^{\theta_*}\!\!\!e^{-rs}h(t+s,X_s^{[n,\nu]})\ud s +\int_{[0,\theta_*)}\!\!e^{-rs}f(t+s)\,\ud \nu_s\Big].
\end{align}

We now follow similar arguments as in the proof of Lemma \ref{lem:convst} (below Eq.~\eqref{eq:ev1}) to show that on the event $\{\sigma_*<\tau_*\}$ we have
$v\big(t+\sigma_*,X_{\sigma_*-}^{[n,\nu]}\big)=g\big(t+\sigma_*,X_{\sigma_*-}^{[n,\nu]}\big)$, so 
\begin{align}\label{eq:sigma<tau}
v\big(t+\theta_*,X_{\theta_*-}^{[n,\nu]}\big)=&\,v\big(t+\sigma_*,X_{\sigma_*-}^{[n,\nu]}\big)=g\big(t+\sigma_*,X_{\sigma_*-}^{[n,\nu]}\big)\notag\\
=&\,g\big(t+\sigma_*,X_{\sigma_*}^{[n,\nu]}\big)-\int_0^{\Delta\nu_{\sigma_*}}\langle\nabla^0 g(t+\sigma_*,X^{[n,\nu]}_{\sigma_*-}+\lambda n_{\sigma_*}),n_{\sigma_*}\rangle\ud\lambda\\
\le&\,g\big(t+\sigma_*,X_{\sigma_*}^{[n,\nu]}\big)+f(t+\sigma_*)\Delta\nu_{\sigma_*},\notag
\end{align}
where the inequality uses the bound on the gradient $\nabla^0 g$ imposed by Assumption \ref{ass:gen2}(iii).

On the event $\{\sigma_*\ge \tau_*\}$ we have
\begin{align}\label{eq:tau<sigma}
v\big(t+\theta_*,X_{\theta_*-}^{[n,\nu]}\big) &= v\big(t+\tau_*,X_{\tau_*-}^{[n,\nu]}\big)\notag\\
&=v\big(t+\tau_*,X_{\tau_*}^{[n,\nu]}\big)-\int_0^{\Delta\nu_{\tau_*}}\langle\nabla^0 v(t+\tau_*,X^{[n,\nu]}_{\tau_*-}+\lambda n_{\tau_*}),n_{\tau_*}\rangle\ud\lambda\\
&\le v\big(t+\tau_*,X_{\tau_*}^{[n,\nu]}\big)+f(t+\tau_*)\Delta\nu_{\tau_*},\notag\\
&= g\big(t+\tau_*,X_{\tau_*}^{[n,\nu]}\big)+f(t+\tau_*)\Delta\nu_{\tau_*},\notag
\end{align}
where the inequality uses the bound on the gradient $\nabla^0 v$ from Corollary \ref{cor:lip} and the last equality holds by the definition of $\tau_*$ and the right-continuity of the  process $X^{[n,\nu]}$.
 
Combining the upper bounds above with \eqref{eq:vopt} yields
$v(t,x)\le \cJ_{t,x}(n,\nu,\theta_*)$,
where we emphasise that $\theta_*$ depends on the control $(n,\nu)$. By arbitrariness of $(n,\nu)\in\cA^{d_0}$ we can conclude the proof of the theorem because $v = \overline{v}$ implies $v(t,x)\ge \inf_{(n,\nu)\in \cA^{d_0}}\cJ_{t,x}(n,\nu,\theta_*)$.
\end{proof}

\subsection{Relaxing Assumption \ref{ass:gen3} into Assumption \ref{ass:gen2}}\label{sec:relax41}
In this section we prove Theorem \ref{thm:usolvar} via a localisation and mollification procedure, and using the results from the section above.
For technical reasons, we assume first that the functions $g$ and $h$ are uniformly bounded, i.e., $\|g\|_{\infty}+\|h\|_\infty<\infty$, and then we relax this condition in the second part of the proof.

Fix a compact set $\hat \Sigma \subset \R^d$ and denote $\Sigma = [0, T] \times \hat \Sigma$. There is a family $(\zeta_j)_{j\in\N} = (\zeta_j^\Sigma)_{j\in\N}$ of mollifiers in $\R^{d+1}_{0,T}$ and a sequence $(c_j)_{j\in\N}$ of positive numbers converging to $0$ such that, denoting $g^j\coloneqq g*\zeta_j$, $h^j\coloneqq h*\zeta_j$ and $f^j\coloneqq(f+c_j)*\zeta_j$, we have
\begin{align}\label{eq:unifc}
\|g^j-g\|_{C^0(\Sigma)}+\|h^j-h\|_{C^0(\Sigma)}\leq \tfrac{K_\Sigma}{j}\quad\text{for any $j\in\N$ and a constant $K_\Sigma > 0$},
\end{align} 
and
\begin{equation}\label{eq:unifc1}
0\leq f^j-f\leq \tfrac{c_0}{j},\quad\text{for any $j\in\N$ and a constant $c_0>0$}.
\end{equation}
Note that the definition of $f^j$ is with an abuse of notation as $f$ depends only on $t$: for this mollification we extend $f$ into the spatial dimension as a constant function.

Recall that $B_k\subset\R^d$ denotes the ball of radius $k$ centred in the origin. Let $(\xi_k)_{k\in\N}\subset C^{\infty}_c(\R^{d})$ be a sequence of cut-off functions such that $\xi_k(x)=1$ for $x\in B_k$ and $\xi_k(x)=0$ for $x\notin B_{2k}$. We find it convenient to construct the sequence as follows: let 
\begin{align}\label{eq:xi}
\xi(z)\coloneqq\left\{
\begin{array}{ll}
1& z\le 0,\\
0& z\ge 1,\\
\exp\big(\tfrac{1}{z-1}\big)/\big[\exp\big(\tfrac{1}{z-1}\big)+\exp\big(-\tfrac1z\big)\big],& z\in(0,1),
\end{array}
\right.
\end{align}
so that $\|\xi'\|_\infty=2$, and define $\xi_k(x)\coloneqq \xi(\tfrac{|x|_{d}-k}{k})$ for $x\in\R^d$. Then
\begin{align}\label{eq:gradxi}
|\nabla^0\xi_k(x)|^2_{d_0} \leq |\nabla \xi_k(x)|_d^2=\tfrac{1}{k^2}|\xi'(\tfrac{|x|_d-k}{k})|^2\leq \tfrac{4}{k^2}.
\end{align}

Now we set $g^{j,k}\coloneqq g^{j}\xi_k $, $h^{j,k}\coloneqq h^j\xi_k$, and 
\begin{align*}
f^{j,k}(t)\coloneqq f^j(t)+\tfrac{2}{k}\|g\|_{\infty},
\end{align*}
where we construct $g^j$, $h^j$ and $f^j$ using the above mollification procedure with $\hat\Sigma=B_{k}$.
With such choice of $f^{j,k}$, recalling that $|\nabla^0g^j|_{d_0}\le f^j$ by (iii) in Assumption \ref{ass:gen2} and using the bound in \eqref{eq:gradxi}, we have
\begin{align*}
&|\nabla^0g^{j,k}(t,x)|_{d_0}^2\\
&=(\xi_k(x))^2|\nabla^0g^j(t,x)|_{d_0}^2+2\xi_k(x)g^{j}(t,x)\langle \nabla^0 g^j(t,x),\nabla^0\xi_k(x)\rangle+(g^j(t,x))^2|\nabla^0\xi_k(x)|_{d_0}^2\\
&\leq (f^j(t))^2+4\|g\|_{\infty}f^j(t)/k+4\|g\|^2_\infty/k^2\\
&=(f^j(t)+\tfrac{2}{k}\|g\|_\infty)^2=\big(f^{j,k}(t)\big)^2.
\end{align*}

Fix $(t,x)\in\R^{d+1}_{0,T}$. For an arbitrary treble $[(n,\nu),\tau]\in \cA^{d_0}\times\cT_t$ we consider the game with expected payoff 
\begin{align}\label{eq:Jbdd}
&\cJ^{j,k}_{t,x}(n,\nu,\tau)\\
&= \E_{x}\Big[e^{-r\tau}\!g^{j,k}(t\!+\!\tau,X_\tau^{[n,\nu]})\!+\!\int_0^{\tau}\!\! e^{-rs}h^{j,k}(t\!+\!s,X_s^{[n,\nu]})\,\ud s\! +\!\int_{[0,\tau]}\!\! e^{-rs}f^{j,k}(t\!+\!s)\,\ud \nu_s \Big].\notag
\end{align}
Since $f^j\in C^\infty([0,T])$, $g^{j,k},h^{j,k}\in C^\infty_{c,\,\rm sp}(\R^{d+1}_{0,T})$, then Theorems \ref{thm:valgame_1} and \ref{thm:opttaustar_1} yield that there exists a value $v^{j,k}$ of the game and an optimal stopping time $\theta_*^{j,k}=\tau^{j,k}_*\wedge\sigma^{j,k}_*$, with
\begin{align*}
\begin{split}
&\tau_*^{j,k}\coloneqq \inf\big\{s\geq 0\,\big|\, v^{j,k}(t+s,X_s^{[n,\nu]})=g^{j,k}(t+s,X_s^{[n,\nu]})\big\},\\
&\sigma_*^{j,k}\coloneqq \inf\big\{s\geq 0\,\big|\, v^{j,k}(t+s,X_{s-}^{[n,\nu]})=g^{j,k}(t+s,X_{s-}^{[n,\nu]})\big\}.
\end{split}
\end{align*}
Finally, we set
\[
v^{\infty}\coloneqq \limsup_{k\to\infty}\limsup_{j\to\infty}v^{j,k}
\]
and proceed to show that $v^{\infty}\ge \overline v$ and $v^\infty\le \underline v$.

\begin{lemma}\label{lem:uinfty}
Let Assumptions \ref{ass:gen1} and \ref{ass:gen2} hold and assume $\|g\|_\infty+\|h\|_\infty<\infty$. For any $(t,x)\in\R^{d+1}_{0,T}$ we have 
\[
v^\infty(t,x)=\overline v(t,x)=\underline v(t,x),
\]
hence the value $v$ of the game \eqref{eq:valuegame_1} exists.
\end{lemma}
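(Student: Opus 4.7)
By construction, $g^{j,k}, h^{j,k}\in C^\infty_{c,\,\rm sp}(\R^{d+1}_{0,T})$, $f^{j,k}\in C^\infty([0,T])$ is strictly positive, and the inequality $|\nabla^0 g^{j,k}|_{d_0}\le f^{j,k}$ has been verified just above. Therefore Assumption \ref{ass:gen3} holds for the triple $(f^{j,k}, g^{j,k}, h^{j,k})$, and Theorem \ref{thm:valgame_1} applied to the game with expected payoff $\cJ^{j,k}_{t,x}$ yields a value $v^{j,k}$ with both representations
\[
v^{j,k}(t,x)=\sup_{\tau\in\cT_t}\inf_{(n,\nu)\in \cA^{d_0}}\cJ^{j,k}_{t,x}(n,\nu,\tau)=\inf_{(n,\nu)\in \cA^{d_0}}\sup_{\tau\in\cT_t}\cJ^{j,k}_{t,x}(n,\nu,\tau).
\]
My plan is to show that $\cJ^{j,k}_{t,x}$ approximates $\cJ_{t,x}$ uniformly on $\cA^{d_0,opt}_{t,x}\times\cT_t$ with an error vanishing in the iterated limit $\limsup_k\limsup_j$, and then to conclude via $\eta$-optimal comparisons in the two representations above.

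For the uniform estimate I decompose $\cJ^{j,k}_{t,x}(n,\nu,\tau)-\cJ_{t,x}(n,\nu,\tau)$ into its $g$-, $h$- and $f$-parts. The $f$-part is controlled using $0\le f^{j,k}-f\le c_0/j+2\|g\|_\infty/k$ together with the bound $\E_x[\nu_{T-t}]\le K_2(1+|x|_d)$ from Lemma \ref{lem:3.3}. The $g$-part is handled by splitting the expectation along $\{|X^{[n,\nu]}_\tau|_d\le k\}$ and its complement: on the former, $\xi_k\equiv 1$ on $B_k$ together with \eqref{eq:unifc} (applied with $\hat\Sigma=B_k$) supplies an $O(1/j)$ mollification error; on the latter, $|g^{j,k}-g|\le 2\|g\|_\infty$ combined with Markov's inequality and the moment bound of Corollary \ref{lem:L1estcntrSDE} gives $\P_x(|X^{[n,\nu]}_\tau|_d>k)\le K_4(1+|x|_d)/k$. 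A Fubini-type argument treats the $h$-part analogously and one arrives at
\[
\sup_{(n,\nu)\in \cA^{d_0,opt}_{t,x}}\,\sup_{\tau\in\cT_t}\big|\cJ^{j,k}_{t,x}(n,\nu,\tau)-\cJ_{t,x}(n,\nu,\tau)\big|\le \epsilon_{j,k}(x),
\]
with $\lim_{k\to\infty}\lim_{j\to\infty}\epsilon_{j,k}(x)=0$ for every $x\in\R^d$.

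To conclude, the proof of Lemma \ref{lem:3.3} carries over to the mollified game because $f^{j,k}\ge f(T)>0$ and $g^{j,k},h^{j,k}$ are uniformly bounded, so both representations of $v^{j,k}$ remain valid when the controller is restricted to $\cA^{d_0,opt}_{t,x}$. Fix $\eta>0$. For $v^\infty\le\underline v$, pick a $\tau^{j,k}$ which is $\eta$-optimal in the lower representation of $v^{j,k}$; then the uniform estimate above combined with the definition of $\underline v$ gives
\[
v^{j,k}(t,x)\le \inf_{(n,\nu)\in \cA^{d_0,opt}_{t,x}}\cJ_{t,x}(n,\nu,\tau^{j,k})+\epsilon_{j,k}(x)+\eta\le \underline v(t,x)+\epsilon_{j,k}(x)+\eta,
\]
and the iterated $\limsup$ followed by $\eta\downarrow 0$ yields $v^\infty\le \underline v$. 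Symmetrically, choosing an $\eta$-optimal $(n^{j,k},\nu^{j,k})\in \cA^{d_0,opt}_{t,x}$ in the upper representation of $v^{j,k}$ gives $v^\infty\ge \overline v$. Combined with $\underline v\le\overline v$ this forces $\underline v=v^\infty=\overline v$ and the game admits a value.

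The main obstacle is the uniform payoff estimate: producing an $\epsilon_{j,k}(x)$ that is genuinely uniform over $\cA^{d_0,opt}_{t,x}$ despite the spatial cut-off $\xi_k$ forcing $g^{j,k}$ and $g$ to disagree substantially outside $B_{2k}$. Uniformity is only achievable because Corollary \ref{lem:L1estcntrSDE} provides an $L^1$-moment bound on $|X^{[n,\nu]}_\tau|_d$ that is independent of the pair $(n,\nu)\in \cA^{d_0,opt}_{t,x}$ and of $\tau$, which itself rests on the stability estimates of Section \ref{sec:stability}. A secondary technical point is checking that Lemma \ref{lem:3.3} applies to the mollified game with constants independent of $j$ and $k$, which is what legitimises the restriction to $\cA^{d_0,opt}_{t,x}$ throughout the iterated limit.
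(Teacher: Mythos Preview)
Your proof is correct and follows essentially the same route as the paper's: both arguments rest on the same three ingredients, namely the mollification bound \eqref{eq:unifc}--\eqref{eq:unifc1}, control of the cut-off error via Markov's inequality combined with Corollary \ref{lem:L1estcntrSDE}, and the restriction to $\cA^{d_0,opt}_{t,x}$ from Lemma \ref{lem:3.3} with constants uniform in $(j,k)$. The only organisational difference is that you package these into a single two-sided uniform estimate $|\cJ^{j,k}_{t,x}-\cJ_{t,x}|\le\epsilon_{j,k}(x)$ and then invoke $\eta$-optimal strategies in the two sup--inf representations of $v^{j,k}$, whereas the paper treats the two inequalities separately and, for $v^\infty\le\underline v$, uses the \emph{optimal} stopping time $\theta^{j,k}_*$ from Theorem \ref{thm:opttaustar_1} rather than an $\eta$-optimal one; this is a cosmetic distinction and your version is marginally more self-contained in that it only needs Theorem \ref{thm:valgame_1} (existence of the value) and not Theorem \ref{thm:opttaustar_1}.
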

\begin{proof}
We start by proving $v^{\infty}\leq \underline{v}$. Take $\theta^{j,k}_*$ defined above. Then
\[
\underline v(t,x)\ge \inf_{(n,\nu)\in \cA^{d_0}}\cJ_{t,x}(n,\nu,\theta^{j,k}_*),
\]
as $\theta^{j,k}_*$ is suboptimal for $\underline v$. For any
$\eta>0$ there is a pair $(n^{j,k,\eta},\nu^{j,k,\eta})$ such that
\[
\inf_{(n,\nu)\in \cA^{d_0}}\cJ_{t,x}(n,\nu,\theta^{j,k}_*)\ge\cJ_{t,x}(n^{j,k,\eta},\nu^{j,k,\eta},\theta^{j,k}_*)-\eta. 
\]
Moreover, from the optimality of $\theta_*^{j,k}$ for $v^{j,k}$ in the sense of Theorem \ref{thm:opttaustar_1}, we have
\[
v^{j,k}(t,x)\le \cJ^{j,k}_{t,x}(n^{j,k,\eta},\nu^{j,k,\eta},\theta^{j,k}_*).
\]

For ease of notation we denote $(\theta^{j,k}_*,n^{j,k,\eta},\nu^{j,k,\eta})=(\theta,n,\nu)$ in what follows. Combining the two bounds above and recalling that $g^{j,k}=g^j$ and $h^{j,k}=h^j$ in $[0,T]\times B_k$ we obtain
\begin{equation*}
\begin{aligned}
v^{j,k}(t,x)-\underline{v}(t,x)&\leq\cJ^{j,k}_{t,x}(n,\nu,\theta)-\cJ_{t,x}(n,\nu,\theta)+\eta \\
&\le \E_x\Big[\big|g^{j}\big(t+\theta,X_{\theta}^{[n,\nu]}\big)-g\big(t+\theta,X_{\theta}^{[n,\nu]}\big)\big|\mathds{1}_{\{X_{\theta}^{[n,\nu]} \in B_{k}\}}+2e^{-r\theta}\|g\|_\infty\mathds{1}_{\{X_{\theta}^{[n,\nu]} \notin B_{k}\}}\Big]\\
&\quad+\E_x\Big[\int_0^{\theta}\!\big|h^{j,k}\big(t+s,X_s^{[n,\nu]}\big)-h\big(t+s,X_s^{[n,\nu]}\big)\big|\mathds{1}_{\{X_{s}^{[n,\nu]} \in B_{k}\}}\,\ud s\Big]\\
&\quad+\E_x\Big[2\|h\|_\infty\int_0^{\theta}\!e^{-rs}\mathds{1}_{\{X_{s}^{[n,\nu]} \notin B_{k}\}}\,\ud s+\nu_{T-t}\big(\tfrac{c_0}{j}+\tfrac{2}{k}\|g\|_{\infty}\big)\Big]+\eta,\end{aligned}
\end{equation*}
where we used $f^{j,k}-f\le c_0/j+2\|g\|_\infty/k$. Next, from \eqref{eq:unifc} we obtain 
\begin{equation}\label{eq:approx0}
\begin{aligned}
&v^{j,k}(t,x)-\underline{v}(t,x)\\
&\leq(1+T)\tfrac{K_{B_{k}}}{j}+(\tfrac{c_0}{j}+\tfrac{2}{k}\|g\|_{\infty})\E_x\big[\nu_{T-t}\big]+2\|g\|_\infty\P_x\big(X_{\theta}^{[n,\nu]} \notin B_{k}\big)\\
&\quad+2\|h\|_\infty\int_0^{T-t}\P_x\big(X_{s}^{[n,\nu]} \notin B_{k}\big)\ud s+\eta,
\end{aligned}
\end{equation}

From the proof of Lemma \ref{lem:3.3}, we can restrict our attention to processes $(n,\nu) \in \cA^{d_0,opt}_{t,x}$ for which $\E[\nu_{T-t}]\le c(1+|x|_{d})$, where the constant $c$ can be chosen independently of $(j,k)$. From Corollary \ref{lem:L1estcntrSDE} and Markov's inequality we deduce that 
\[
\P_x\big(X_{\theta}^{[n,\nu]} \notin B_{k}\big)\le \frac1k\E_x\big[\big|X_{\theta}^{[n,\nu]}\big|_d\big] \le \frac{\tilde{K}_4(1+|x|_d)}{k}
\]
for some $\tilde{K}_4 > 0$ and the same upper bound also holds for $\P_x(X_{s}^{[n,\nu]} \notin B_{k})$. Now, letting $j\to\infty$ first and then letting $k\to \infty$ in \eqref{eq:approx0} we obtain $v^\infty\le \underline v + \eta$. Finally we send $\eta \to 0$. 

Next we are going to show that $v^\infty\ge \overline v$. Fix an arbitrary $\eta>0$. Take $(n^{j,k,\eta},\nu^{j,k,\eta})\in \cA^{d_0}$ such that
\[
v^{j,k}(t,x) \ge\sup_{\tau\in\cT_t} \cJ^{j,k}_{t,x}(n^{j,k,\eta},\nu^{j,k,\eta},\tau) - \eta.
\]
Then there is $\tau^{j,k,\eta}$ such that
\[
\overline{v}(t,x) \le \sup_{\tau \in \cT_t} \cJ_{t,x}(n^{j,k,\eta},\nu^{j,k,\eta}, \tau) \le \cJ_{t,x}(n^{j,k,\eta},\nu^{j,k,\eta}, \tau^{j,k,\eta}) + \eta,
\]
where the first inequality follows from suboptimality of $(n^{j,k,\eta},\nu^{j,k,\eta})$ for $\overline{v}(t,x)$. Relabelling 
\[
[(n^{j,k,\eta},\nu^{j,k,\eta}),\tau^{j,k,\eta}]=[(n,\nu),\tau],
\] 
the above inequalities give the bound
\[
\overline v(t,x)-v^{j,k}(t,x)\le \cJ_{t,x}(n,\nu,\tau)-\cJ^{j,k}_{t,x}(n,\nu,\tau)+2\eta.
\]
Similar estimates as in \eqref{eq:approx0} continue to hold, with a simplification that the inequality $f^{j,k}\ge f$ allows us to drop the second term in the final expression therein. Then, passing to the limit in $j$ and, then, in $k$ we arrive at the desired conclusion.
\end{proof}
\begin{remark}\label{rem:uinfty}
Notice that if we introduce
\[
\hat v^\infty\coloneqq\liminf_{k\to\infty}\liminf_{j\to\infty} v^{j,k},
\]
then we can repeat the same arguments of proof as in Lemma \ref{lem:uinfty} to show that $\hat v^\infty=\overline v=\underline v=v$. Hence,
\[
v=\overline v=\underline v=\lim_{k\to\infty}\lim_{j\to\infty}v^{j,k}.
\]
Moreover, it is clear from the proof (see in particular \eqref{eq:approx0}) that the convergence is uniform on any compact subset of $\R^{d+1}_{0,T}$. This fact will be used later to prove convergence of optimal stopping times.
\end{remark}

We now want to extend the result above to the case of unbounded $g$ and $h$. Recalling that $g,h\ge 0$, we can approximate them with bounded ones by setting $g_m=g\wedge m$ and $h_m=h\wedge m$ for $m\in\N$. Let us denote by $v_m$ the value of the game associated with the functions $g_m$ and $h_m$, which exists by Lemma \ref{lem:uinfty}. By construction $v_m\le v_{m+1}$ and we denote the limit
\[
v_\infty\coloneqq\lim_{m\to\infty}v_m .
\]
\begin{lemma}\label{eq:vsolgame}
Let Assumptions \ref{ass:gen1} and \ref{ass:gen2} hold. For any $(t,x)\in\R^{d+1}_{0,T}$ we have 
\[
v_\infty(t,x)=\overline v(t,x)=\underline v(t,x),
\]
hence the value $v$ of the game \eqref{eq:valuegame_1} exists.
\end{lemma}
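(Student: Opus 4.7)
The strategy is to sandwich $v_\infty$ between $\underline{v}$ and $\overline{v}$ using the monotone truncation and a uniform estimate on $|\cJ_{t,x}-\cJ^m_{t,x}|$ that leverages the \emph{strict} sub-linearity $\beta<1$ from Assumption \ref{ass:gen2}(ii).

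\textbf{Step 1 (easy direction $v_\infty\le\underline v$).} Since $g_m\le g$ and $h_m\le h$, one has $\cJ^m_{t,x}(n,\nu,\tau)\le\cJ_{t,x}(n,\nu,\tau)$ pointwise, whence the respective upper and lower values satisfy $\overline{v}_m\le\overline v$ and $\underline{v}_m\le\underline v$. By Lemma \ref{lem:uinfty} applied to the bounded payoffs $(g_m,h_m)$ (which still satisfy Assumption \ref{ass:gen2}; in particular $|\nabla^0 g_m|_{d_0}\le|\nabla^0 g|_{d_0}\le f$ a.e.), $v_m=\underline{v}_m=\overline{v}_m$. Hence $v_m\le\underline v$ for every $m$, and therefore $v_\infty\le\underline v$.

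\textbf{Step 2 (hard direction $v_\infty\ge\overline v$).} Fix $(t,x)\in\R^{d+1}_{0,T}$ and $\eta>0$. For each $m$, choose $(n^m,\nu^m)\in\cA^{d_0}$ that is $\eta$-optimal for $\overline{v}_m(t,x)=v_m(t,x)$. By Lemma \ref{lem:3.3}, and since the constant $K_2$ depends on $(g,h)$ only through the growth bound $K_1$ (preserved under truncation), we may assume $(n^m,\nu^m)\in\cA^{d_0,opt}_{t,x}$. Next, choose $\tau^m\in\cT_t$ that is $\eta$-optimal for $\sup_\tau\cJ_{t,x}(n^m,\nu^m,\tau)\ge\overline v(t,x)$. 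Then
\[
\overline v(t,x)-v_m(t,x)\le \cJ_{t,x}(n^m,\nu^m,\tau^m)-\cJ^m_{t,x}(n^m,\nu^m,\tau^m)+2\eta=:\Delta_m(t,x)+2\eta,
\]
where
\[
\Delta_m(t,x)=\E_x\Big[e^{-r\tau^m}(g-g_m)(t+\tau^m,X^{[n^m,\nu^m]}_{\tau^m})+\!\int_0^{\tau^m}\!e^{-rs}(h-h_m)(t+s,X^{[n^m,\nu^m]}_s)\,\ud s\Big].
\]

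\textbf{Step 3 (uniform decay of $\Delta_m$).} From $g\le K_1(1+|y|^\beta)$, the inequality $g(t,y)>m$ forces $|y|>R_m:=((m/K_1-1)_+)^{1/\beta}$. For $m$ large enough that $R_m\ge 1$ and for $|y|>R_m$, one has $1\le|y|^\beta$ and $|y|^\beta=|y|^{\beta-1}|y|\le R_m^{\beta-1}|y|$ (using $\beta-1<0$), so
\[
0\le(g-g_m)(t,y)\le 2K_1 R_m^{\beta-1}|y|_d,
\]
and similarly for $h-h_m$. Since $(n^m,\nu^m)\in\cA^{d_0,opt}_{t,x}$, Corollary \ref{lem:L1estcntrSDE} yields $\E_x\big[|X^{[n^m,\nu^m]}_s|_d\big]\le K_4(1+|x|_d)$ uniformly in $s\in[0,T-t]$ and in $m$. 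Consequently,
\[
\Delta_m(t,x)\le 2K_1K_4(1+T)R_m^{\beta-1}(1+|x|_d)\xrightarrow[m\to\infty]{}0,
\]
because $\beta<1$.

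\textbf{Conclusion.} Taking $\limsup_{m\to\infty}$ in Step 2 gives $\overline v(t,x)-v_\infty(t,x)\le 2\eta$, and letting $\eta\downarrow 0$ yields $\overline v\le v_\infty$. Combined with Step 1, $\overline v=\underline v=v_\infty$. The main obstacle is Step 3: the uniform-in-$m$ control of the truncation error over the (control-dependent) dynamics crucially requires the strictly sub-linear growth $\beta<1$ (so that $R_m^{\beta-1}\to 0$) together with the $L^1$ bound on $X^{[n,\nu]}$ from Corollary \ref{lem:L1estcntrSDE}, which is itself uniform over the admissible class $\cA^{d_0,opt}_{t,x}$ thanks to Lemma \ref{lem:3.3}.
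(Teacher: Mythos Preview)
Your proof is correct and follows essentially the same approach as the paper's: the easy direction $v_\infty\le\underline v$ is handled by monotonicity of the truncation, and the hard direction $v_\infty\ge\overline v$ proceeds by choosing an $\eta$-optimal control for $v_m$, an $\eta$-optimal stopping time for $\overline v$ given that control, and then showing the truncation error $\cJ_{t,x}-\cJ^m_{t,x}$ vanishes uniformly over $\cA^{d_0,opt}_{t,x}$ thanks to $\beta<1$ and Corollary~\ref{lem:L1estcntrSDE}. The one minor difference is purely technical: in Step~3 you use the pointwise bound $(g-g_m)(t,y)\le 2K_1 R_m^{\beta-1}|y|_d$ and then the $L^1$ estimate directly, whereas the paper bounds $\E_x[|X_\tau|_d^\beta\,\mathds{1}_{\{X_\tau\notin B_{R(m)}\}}]$ via H\"older's inequality and Markov's inequality; your version is arguably cleaner but the underlying idea is the same.
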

\begin{proof}
Since $h_m\le h$ and $g_m\le g$, it is immediate to verify that $v_\infty\le\underline v$. It remains to verify that $v_\infty\ge \overline v$. 

Thanks to (sub)linear growth of $g$ and $h$ there is a sequence $(R(m))_{m\in\N}$ such that $R(m)\uparrow \infty$ as $m\to\infty$ and $g_m=g$ and $h_m=h$ on $[0,T]\times B_{R(m)}$. Let us denote by $\cJ^m_{t,x}$ the expected payoff of the game with the payoff functions $g_m$ and $h_m$.
For fixed $\eta>0$, we can find a pair $(n,\nu)=(n^{m,\eta},\nu^{m,\eta})\in \cA^{d_0}$ and a stopping time $\tau=\tau^{m,\eta}\in\cT_t$ such that 
\begin{align*}
&\,\overline{v}(t,x)-v_{m}(t,x)\\
&\leq \cJ_{t,x}(n,\nu,\tau)-\cJ^{m}_{t,x}(n,\nu,\tau)+2\eta\\
&\leq \E_x\Big[g(t\!+\!\tau,X_{\tau}^{[n,\nu]})\mathds{1}_{\{X^{[n,\nu]}_\tau\notin B_{R(m)} \}}\!+\!\int_{0}^{T-t}e^{-rs}\mathds{1}_{\{X^{[n,\nu]}_s\notin B_{R(m)} \}}h(t\!+\!s,X^{[n,\nu]}_s)\ud s\Big]\!+\!2\eta,
\end{align*}
where we obtained the inequality simply by dropping the positive terms $h_m$ and $g_m$ on the events when the process is outside the ball $B_{R(m)}$. 

By the strict sub-linear growth of $g$ and $h$ and using H\"older's inequality we obtain
\begin{equation}\label{eq:vmlim}
\begin{aligned}
&\overline{v}(t,x)\!-\!v_{m}(t,x)-2\eta\\
&\leq \E_x\Big[K_1\big(1\!+\!\big|X_{\tau}^{[n,\nu]}\big|^\beta_d\big)\mathds{1}_{\{X^{[n,\nu]}_\tau\notin B_{R(m)}\}}\!+\!\int_{0}^{T-t}\!\!K_1\big(1\!+\!\big|X^{[n,\nu]}_s\big|^\beta_d\big)\mathds{1}_{\{X^{[n,\nu]}_s\notin B_{R(m)} \}}\ud s\Big]\\
&\le K_1\Big\{\big(\E_x\big[\big|X_{\tau}^{[n,\nu]}\big|_d\big]\big)^\beta \big(\P_x (X^{[n,\nu]}_\tau\notin B_{R(m)})\big)^{1-\beta}\\
&\hspace{120pt}+\!\int_0^{T-t}\!\!\big(\E_x\big[\big|X_{s}^{[n,\nu]}\big|_d\big]\big)^\beta \big(\P_x(X^{[n,\nu]}_s\notin B_{R(m)})\big)^{1-\beta}\ud s\Big\}.
\\
&\quad+\!K_1\P_x\big(X^{[n,\nu]}_\tau\notin B_{R(m)}\big)\!+\!K_1\int_0^{T-t}\P_x\big(X^{[n,\nu]}_s\notin B_{R(m)}\big)\ud s.
\end{aligned}
\end{equation}
For any $\sigma\in\cT_t$, Markov's inequality and an estimate for $\E_x[|X_{\sigma}^{[n,\nu]}|_d]$ from Corollary \ref{lem:L1estcntrSDE} give
\[
\P_x\big(X^{[n,\nu]}_\sigma\notin B_{R(m)}\big)\le \frac{\E_x\big[\big|X_{\sigma}^{[n,\nu]}\big|_d\big]}{R(m)}\le \frac{K_4(1+|x|_d)}{R(m)}. 
\]
Therefore, letting $m\to\infty$ in \eqref{eq:vmlim} we find $\overline v\le v_\infty+2\eta$ and, by arbitrariness of $\eta>0$, we conclude the proof.
\end{proof}
\begin{remark}\label{rem:unifv_m}
As in Lemma \ref{lem:uinfty}, also in the lemma above the convergence of $v_m$ to $v$ is uniform on compact subsets of $\R^{d+1}_{0,T}$. This is immediately deduced from \eqref{eq:vmlim} and the concluding estimates in the proof. 
\end{remark}
Since $|\nabla^0 v^{j,k}|_{d_0}\le f^{j,k}$ a.e.\ for every $j,k>0$ (Corollary \ref{cor:lip}), then $|\nabla^0 v^\infty|_{d_0}\le f$ a.e. By the same rationale, also $v_\infty$ satisfies the same bound. That is stated formally in the next corollary.
\begin{corollary}\label{cor:lip2}
Under Assumptions \ref{ass:gen1} and \ref{ass:gen2}, the value function $v$ is Lipschitz in the first $d_0$ spatial coordinates with constant bounded by $f$ in the sense that $|\nabla^0 v(t,x)|_{d_0}\le f(t)$ for a.e.\ $(t,x)\in\R^{d+1}_{0,T}$.
\end{corollary}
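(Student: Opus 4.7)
The plan is to combine the a.e.\ gradient bound from Corollary \ref{cor:lip} for $v^{j,k}$ with the uniform-on-compacts convergence statements recorded in Remark \ref{rem:uinfty} and Remark \ref{rem:unifv_m}. Since uniform limits interact badly with a.e.\ bounds on weak derivatives, the key idea is to pass through pointwise Lipschitz estimates in the first $d_0$ spatial coordinates as an intermediary, and then recover the gradient bound for $v$ at the very end via Rademacher's theorem.

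First I would recall that, under Assumption \ref{ass:gen3}, each $v^{j,k}$ belongs to $W^{1,2,p}_{\ell oc}(\R^{d+1}_{0,T})$ for all $p\in[1,\infty)$ and satisfies $|\nabla^0 v^{j,k}|_{d_0}\le f^{j,k}$ a.e.\ by Corollary \ref{cor:lip}. Using the characterization of Sobolev functions by absolute continuity on lines (see, e.g., Evans--Gariepy), together with the continuity of $v^{j,k}$ and $f^{j,k}$, the a.e.\ bound can be upgraded to the pointwise estimate
\[
|v^{j,k}(t,y)-v^{j,k}(t,x)| \le f^{j,k}(t)\,|y_{[d_0]}-x_{[d_0]}|_{d_0}
\]
for every $t\in[0,T]$ and every $x,y\in\R^d$ with $y_{[d_1]}=x_{[d_1]}$.

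Next I would pass to the limit, first $j\to\infty$ and then $k\to\infty$, using the uniform convergence $v^{j,k}\to v_m$ on compact subsets from Remark \ref{rem:uinfty} and the pointwise convergence $f^{j,k}(t)=f^{j}(t)+\tfrac{2}{k}\|g_m\|_\infty\to f(t)$. This yields
\[
|v_m(t,y)-v_m(t,x)|\le f(t)\,|y_{[d_0]}-x_{[d_0]}|_{d_0}
\]
for all $t,x,y$ as above. Letting $m\to\infty$ and invoking the uniform convergence of $v_m$ to $v$ from Remark \ref{rem:unifv_m}, the same inequality holds with $v_m$ replaced by $v$.

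Finally, the slice $x_{[d_0]}\mapsto v(t,x_{[d_0]},x_{[d_1]})$ is Lipschitz with constant $f(t)$ for every fixed $(t,x_{[d_1]})$, so Rademacher's theorem gives a.e.\ differentiability of this slice with gradient bounded in Euclidean norm by $f(t)$; an application of Fubini's theorem then yields $|\nabla^0 v(t,x)|_{d_0}\le f(t)$ for a.e.\ $(t,x)\in\R^{d+1}_{0,T}$. The main technical point is the conversion from an a.e.\ gradient bound to a pointwise Lipschitz estimate in the first $d_0$ variables, because only estimates of this latter type are stable under uniform convergence; once this conversion is carried out on $v^{j,k}$, the two limiting procedures and the concluding application of Rademacher's theorem are routine.
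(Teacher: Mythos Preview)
Your proposal is correct and follows the same approach as the paper: both start from Corollary \ref{cor:lip} for $v^{j,k}$ and pass to the limit first in $(j,k)$ and then in $m$ using the uniform convergence on compacts from Remarks \ref{rem:uinfty} and \ref{rem:unifv_m}. The paper's argument is a one-line assertion immediately preceding the corollary, whereas you have carefully spelled out the conversion from the a.e.\ gradient bound to a pointwise Lipschitz estimate (and back via Rademacher), which is exactly the step needed to make the passage to the limit rigorous.
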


The last result in this section concerns an optimal stopping time for the value $v=v_\infty$. Given $(n,\nu)\in \cA^{d_0}$, set $\theta_*= \tau_*\wedge\sigma_*$ as in \eqref{eqn:theta_star}.

\begin{lemma}\label{lem:5.10}
Let Assumptions \ref{ass:gen1} and \ref{ass:gen2} hold. For any $(t,x)\in\R^{d+1}_{0,T}$ and $(n,\nu)\in \cA^{d_0}$ we have 
\[
v(t,x) \le \cJ_{t,x}(n,\nu,\theta_*),
\] 
where we recall that $\theta_*=\theta_*(n,\nu)$ depends on the control pair. Hence 
\[
v(t,x)=\inf_{(n,\nu)\in \cA^{d_0}}\cJ_{t,x}(n,\nu,\theta_*(n,\nu))
\] 
and $\theta_*$ is optimal for the stopper in the game with value $v$.
\end{lemma}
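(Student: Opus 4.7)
The plan is to apply Theorem \ref{thm:opttaustar_1} within the double approximation scheme developed in Section \ref{sec:relax41} and pass to a triple limit in $(j,k,m)$. Let $v^{j,k}_m$ denote the value of the game with payoffs obtained by applying the mollification and cut-off procedure of Section \ref{sec:relax41} to the bounded pair $(g\wedge m,h\wedge m)$; the resulting smoothed payoffs $(g^{j,k}_m,h^{j,k}_m,f^{j,k}_m)$ satisfy Assumption \ref{ass:gen3}, so Theorem \ref{thm:opttaustar_1} gives
\[
v^{j,k}_m(t,x) \le \cJ^{j,k}_{m,t,x}(n,\nu,\theta_*^{j,k,m}), \qquad (n,\nu)\in\cA^{d_0},
\]
where $\theta_*^{j,k,m}=\tau_*^{j,k,m}\wedge\sigma_*^{j,k,m}$ is defined analogously to \eqref{eqn:theta_star} with $(v,g)$ replaced by $(v^{j,k}_m,g^{j,k}_m)$ and $\cJ^{j,k}_{m,t,x}$ is the associated expected payoff. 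Remarks \ref{rem:uinfty} and \ref{rem:unifv_m} guarantee that $v^{j,k}_m\to v_m$ as $j,k\to\infty$ and $v_m\to v$ as $m\to\infty$, uniformly on compact subsets of $\R^{d+1}_{0,T}$, and similarly $g^{j,k}_m\to g$ uniformly on compacts.

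The key technical step is an analogue of Lemma \ref{lem:convth}: I would show that $\P_x$-a.s.
\[
\liminf_{m\to\infty}\liminf_{k\to\infty}\liminf_{j\to\infty}\theta_*^{j,k,m}\ge \theta_*.
\]
The proof mimics Lemma \ref{lem:convth}: for $\omega\in\{\theta_*>0\}$ and $\delta<\theta_*(\omega)$, the map $s\mapsto \min\!\big((v-g)(t+s,X^{[n,\nu]}_s),(v-g)(t+s,X^{[n,\nu]}_{s-})\big)$ is strictly positive and lower semi-continuous on $[0,\delta]$, so it attains a strictly positive infimum $\lambda_{\delta,\omega}$. The trajectory $s\mapsto (t+s,X^{[n,\nu]}_s(\omega))$ remains in a random compact set on which uniform convergence forces the analogous expression built from $(v^{j,k}_m,g^{j,k}_m)$ to stay above $\lambda_{\delta,\omega}/2$ for all large $(j,k,m)$, and hence $\theta^{j,k,m}_*\ge\delta$.

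With this in hand, I would reproduce the argument of Theorem \ref{thm:opttaustar_1}: evaluate the analogues of \eqref{eq:uv0}--\eqref{eq:convthetagam} at $\theta^{j,k,m}_*\wedge\theta_*$, which tends to $\theta_*$ from below; use the gradient bound $|\nabla^0 v^{j,k}_m|_{d_0}\le f^{j,k}_m$ from Corollary \ref{cor:lip} applied to each sub-problem to control the integrals against $\ud\nu^c$ and against the jump measure; and pass to the triple limit via dominated convergence, justified by left-continuity of $s\mapsto X^{[n,\nu]}_{s-}$, by Corollary \ref{lem:L1estcntrSDE} and by $\E_x[\nu_{T-t}^2]<\infty$. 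The final jump at $\theta_*$ is then removed exactly as in \eqref{eq:sigma<tau}--\eqref{eq:tau<sigma}: on $\{\sigma_*<\tau_*\}$ one uses $v=g$ at $(t+\sigma_*,X^{[n,\nu]}_{\sigma_*-})$ together with $|\nabla^0 g|_{d_0}\le f$ from Assumption \ref{ass:gen2}(iii), and on $\{\sigma_*\ge\tau_*\}$ one uses continuity of $v$, right-continuity of $X^{[n,\nu]}$ and $|\nabla^0 v|_{d_0}\le f$ from Corollary \ref{cor:lip2}. The resulting inequality $v(t,x)\le \cJ_{t,x}(n,\nu,\theta_*)$ together with $v=\overline v$ yields the second identity of the lemma.

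The main obstacle is securing enough uniformity across the triple index for dominated convergence: in particular, the jump integrand $\int_0^{\Delta\nu_s}\langle \nabla v^{j,k}_m(t+s,X^{[n,\nu]}_{s-}+\lambda n_s),n_s\rangle\,\ud\lambda$ must be controlled, uniformly in $(j,k,m)$, by $f^{j,k}_m(t+s)\Delta\nu_s$; since $f^{j,k}_m\to f$ uniformly and $\E_x[\nu_{T-t}]<\infty$, this yields an integrable dominating function and the three limits can be exchanged.
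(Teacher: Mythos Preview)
Your proposal is correct and follows essentially the same route as the paper: triple approximation $(j,k,m)$ combined with the liminf bound on $\theta^{j,k,m}_*$ (proved exactly as in Lemma~\ref{lem:convth} via uniform convergence on compacts), then reproducing the argument of Theorem~\ref{thm:opttaustar_1} at the stopping time $\theta^{j,k,m}_*\wedge\theta_*$, passing to the triple limit by dominated convergence, and handling the jump at $\theta_*$ via \eqref{eq:sigma<tau}--\eqref{eq:tau<sigma}. The only cosmetic difference is that the paper packages the intermediate step directly as inequality~\eqref{eqn:vjkm} (obtained by re-running the full proof of Theorem~\ref{thm:opttaustar_1}, including its implicit $\gamma$-limit, for the payoffs $f^{j,k}_m,g^{j,k}_m,h^{j,k}_m$), whereas you describe the individual ingredients; the content is the same.
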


\begin{proof}
The proof follows similar arguments as those used in the proof of Thorem \ref{thm:opttaustar_1}, so we provide only a sketch.
Let $g$ and $h$ be functions that satisfy Assumption \ref{ass:gen2}. For $m\in\N$, consider their truncations $g_{m}(t,x)= g(t,x)\wedge m$ and $h_{m}(t,x)= h(t,x)\wedge m$. 
We further mollify and localise those functions to fit into the setting of Theorem \ref{thm:opttaustar_1} as in the beginning of Section \ref{sec:relax41}. Denote
\begin{align*}
f^{j,k}_m(t)\coloneqq &\,((f+c^{m,k}_j)*\zeta^{m,k}_j)(t)+\tfrac{2m}{k},\\
g^{j,k}_m(t,x)\coloneqq&\, (g_m*\zeta^{m,k}_j)(t,x)\xi_k(x),\\
h^{j,k}_m(t,x)\coloneqq&\, (h_m*\zeta^{m,k}_j)(t,x)\xi_k(x),
\end{align*}
where $(\zeta^{m,k}_j)_{j\in\N}$ is a sequence of standard mollifiers and $c^{m,k}_j$ is a sequence of positive numbers so that estimates \eqref{eq:unifc}-\eqref{eq:unifc1} hold, and the cut-off functions $(\xi_k)_{n\in\N}$ are obtained with the construction in \eqref{eq:xi}.

Denote by $v^{j,k}_m$ the value function of the game with payoff functions $f^{j,k}_m$, $g^{j,k}_m$ and $h^{j,k}_m$ (c.f. \eqref{eq:Jbdd}). An optimal stopping time for this game is $\theta_*^{j,k,m}\coloneqq \tau_*^{j,k,m}\wedge\sigma_*^{j,k,m}$ with
\begin{align*}
\begin{split}
&\tau_*^{j,k,m}\coloneqq \inf\big\{s\geq 0\,\big|\, v^{j,k}_m(t+s,X_s^{[n,\nu]})=g^{j,k}_m(t+s,X_s^{[n,\nu]})\big\},\\
&\sigma_*^{j,k,m}\coloneqq \inf\big\{s\geq 0\,\big|\, v^{j,k}_m(t+s,X_{s-}^{[n,\nu]})=g^{j,k}_m(t+s,X_{s-}^{[n,\nu]})\big\},
\end{split}
\end{align*}
for an arbitrary pair $(n,\nu)\in \cA^{d_0}$.
The same arguments as in the proof of Lemma \ref{lem:convth} and the uniform convergence of $v^{j,k}_m$ to $v$ on compact subsets of $\R^{d+1}_{0,T}$ (Lemmas \ref{lem:uinfty} and \ref{eq:vsolgame}, and Remarks \ref{rem:uinfty} and \ref{rem:unifv_m}) yield 
\begin{align*}
\liminf_{m\to\infty}\liminf_{k\to\infty}\liminf_{j\to\infty}\theta^{j,k,m}_*\ge \theta_*,\quad \text{$\P_x$-a.s.}
\end{align*}
Hence,
\begin{align}\label{eq:thetajkm-}
\lim_{m\to\infty}\lim_{k\to\infty}\lim_{j\to\infty}\theta_*^{j,k,m}\wedge \theta_*= \theta_*,\quad \text{$\P_x$-a.s.}
\end{align}

The functions $f^{j,k}_m$, $g^{j,k}_m$, $h^{j,k}_m$ satisfy the assumptions of Theorem \ref{thm:opttaustar_1}. Then, by the same arguments as in the proof of that theorem, replacing $u^\gamma$, $\theta^\gamma_*$, $f^\gamma$, $g$ and $h$ by $v^{j,k}_m$, $\theta_*^{j,k,m}$, $f^{j,k}_m$, $g^{j,k}_m$ and $h^{j,k}_m$, respectively, we obtain
\begin{equation}\label{eqn:vjkm}
\begin{aligned}
v^{j,k}_m(t,x)\le \E_x\Big[&\,e^{-r(\theta_*^{j,k,m}\wedge\theta_*)}v^{j,k}_m\Big(t+\theta_*^{j,k,m}\wedge\theta_*,X_{\theta_*^{j,k,m}\wedge\theta_*-}^{[n,\nu]}\Big)\\
&\,+\int_{0}^{\theta_*^{j,k,m}\wedge\theta_*}\!e^{-rs}h^{j,k}_m(t+s,X_s^{[n,\nu]})\ud s+\int_{[0,\theta_*^{j,k,m}\wedge\theta_*)}\!\!e^{-rs}f^{j,k}_m(t+s)\,\ud \nu_s\Big].
\end{aligned}
\end{equation}
We pass to the limit as $j\to\infty$, $k\to\infty$ and $m\to\infty$ (with the limits taken in the stated order). Using \eqref{eq:thetajkm-} and similar arguments as in \eqref{eq:leftcont}, we have that $\P_x$-a.s.
\[
\lim_{j,k,m\to \infty}X^{[n,\nu]}_{\theta_*^{j,k,m}\wedge\theta_*-}=X^{[n,\nu]}_{\theta_*-}\quad\text{and}\quad\lim_{j,k,m\to\infty}\int_{[0,\theta_*^\gamma\wedge\theta_*)}e^{-rs}f^{j,k,m}(t+s)\ud \nu_s=\int_{[0,\theta_*)}e^{-rs}f(t+s)\ud \nu_s.
\]
We apply dominated convergence theorem to \eqref{eqn:vjkm} justified by the linear growth of all functions involved and the fact that one can restrict the attention to controls $(n,\nu)\in \cA^{d_0}$ such that 
\[\E_x[\nu_{T-t}] \le \frac{c(1 + |x|_d)}{\inf_{m,j,k} f^{j,k}_m(T)}\le \tilde c (1 + |x|_d)\] 
for some $\tilde c < \infty$ (c.f. Lemma \ref{lem:3.3} and arguments leading to \eqref{eq:vopt}). In the limit we obtain 
\begin{align}
v(t,x)\le&\,\E_x\Big[e^{-r\theta_*}v\big(t+\theta_*,X_{\theta_*-}^{[n,\nu]}\big)+\int_{0}^{\theta_*}\!e^{-rs}h(t+s,X_s^{[n,\nu]})\ud s +\int_{[0,\theta_*)}\!\!e^{-rs}f(t+s)\,\ud \nu_s\Big].
\end{align}
Corollary \ref{cor:lip2} and ideas from \eqref{eq:sigma<tau} and \eqref{eq:tau<sigma} yield
\begin{align*}
v(t,x)\leq \cJ_{t,x}(n,\nu,\theta_*(n,\nu)),
\end{align*}
which concludes the proof.
\end{proof}

We combine results from this section to prove Theorem \ref{thm:usolvar}.

\begin{proof}[Proof of Theorem \ref{thm:usolvar}] 
Lemma \ref{eq:vsolgame} shows that the game with expected payoff \eqref{eq:payoff} admits a value function $v$. The optimality of the stopping time $\theta_*$ is asserted in Lemma \ref{lem:5.10}. The continuity of the value function $v$ follows from the continuity of $v^{j,k}_m$ in the proof of Lemma \ref{lem:5.10} (from Theorem \ref{thm:usolvar_ga}) and the uniform convergence of $v^{j,k}_m$ to $v$ on compact sets (see Remarks \ref{rem:uinfty} and \ref{rem:unifv_m}). Corollary \ref{cor:lip2} implies the Lipschitz continuity of $v$ in the first $d_0$ spatial coordinates in the sense required in the statement of the theorem. Finally, the growth condition is easily deduced from Assumption \ref{ass:gen2}(ii) and the uniform bound from Corollary \ref{lem:L1estcntrSDE}. 
\end{proof}
\appendix

\section{}

\subsection*{Proof of Theorem \ref{thm:usolvar_ga}} This proof repeats almost verbatim the one from \cite{bovo2022variational}. Here we only summarise the main steps and highlight a few minor changes. 

The game considered here satisfies \cite[Ass.\ 3.1 and 3.2]{bovo2022variational} with the only exception that the dynamics of $X^{[n,\nu],\gamma}$ in \eqref{eq:SDEgam} has a weight $\gamma$ in the last $d_1$ coordinates of the control process $(n_t)_{t\in[0,T]}$. Such weight does not appear in the controlled state-dynamics in \cite{bovo2022variational} but we will show that it only induces minor changes to the proof.

Following the methodology in \cite{bovo2022variational} we study the penalised problem
\begin{align}\label{eq:penalised}
\partial_t u+\cL u-ru=-h-\tfrac1\delta(g-u)^++\psi_\eps\big(|\nabla u|_\gamma^2-(f^\gamma)^2\big),\quad \text{on $[0,T)\times\R^d$},
\end{align}
with terminal condition $u(T,x)=g(T,x)$ and growth condition $|u(t,x)|\le c(1+|x|_d)$ (actually it is enough to consider bounded $u$; see Remark \ref{rem:bddug}). In \cite{bovo2022variational} the penalised problem is first solved on bounded domains, via a localisation procedure, and then on unbounded domain by passing to the limit in the size of the bounded domains. Notice that the parameter $\gamma$ leads to a different penalisation term in \eqref{eq:penalised} compared to \cite{bovo2022variational}. Indeed, in \cite[Eqs.\ (4.3) and (5.14)]{bovo2022variational} the penalisation reads $\psi_\eps(|\nabla u|^2_d-(f^\gamma)^2)$, whereas here we use the $\gamma$-norm $|\cdot|_\gamma$ of the gradient $\nabla u$. 

By Assumption \ref{ass:gen3}(i), for all sufficiently large $m\in\N$ we have $g,h\equiv 0$ on $[0,T]\times(\R^d\setminus B_m)$ and therefore the localisation performed in \cite[Sec.\ 4.1]{bovo2022variational} is superfluous. In particular, we do not need here to introduce functions $f_m$, $g_m$, $h_m$ from \cite{bovo2022variational}. The penalised problem on a bounded domain $[0, T] \times B_m$ is given by
\begin{align}\label{eq:penalisedb}
\left\{
\begin{array}{l}
\partial_t u+\cL u-ru=-h-\tfrac1\delta(g-u)^++\psi_\eps\big(|\nabla u|_\gamma^2-(f^\gamma)^2\big), \quad\text{on $[0,T)\times B_m$},\\
u(t,x)=0,\quad \text{for $(t,x)\in[0,T)\times \partial B_m$},\\
u(T,x)=g(T,x),\quad \text{for $x\in B_m$}.
\end{array}
\right.
\end{align}
For the existence of a solution $u^{\eps,\delta;\gamma}_m$ to \eqref{eq:penalisedb} the key gradient bounds in \cite[Sec.\ 4.3]{bovo2022variational} can be recovered in our set-up with minor adjustments. In particular, \cite[Prop.\ 4.9]{bovo2022variational} holds by replacing \cite[Eq. (4.38)]{bovo2022variational} with
\begin{align}\label{eq:gbd0}
-2\langle \nabla w^n,\nabla(|\nabla u^n|_\gamma-(f^\gamma)^2)\rangle\leq 2\lambda |\nabla u^n|^2_\gamma+\tilde{R}_n,
\end{align}
where $w^n$ and $u^n$ are two proxies of $u^{\eps,\delta;\gamma}_m$ defined in \cite[Lemma 4.7]{bovo2022variational} (with $w^n,u^n\to u^{\eps,\delta;\gamma}_m$ for $n\to\infty$), $\lambda$ is an arbitrary constant and $\tilde R_n$ is a remainder that vanishes when $n\to 0$.
Since $|\nabla u^n|^2_\gamma\geq \gamma|\nabla u^n|_d^2$ then, using \eqref{eq:gbd0}, \cite[Eq. (4.37)]{bovo2022variational} becomes
\begin{align*}
0\leq (C_1-\lambda\gamma)|\nabla u|^2_d +C_2 +\lambda r M_1+R_n+\tilde{R}_n,
\end{align*}
where $C_1, C_2, M_1$ are the same constants as in the original paper and $R_n$ is another vanishing remainder.
The rest of the proof is the same, up to choosing $\lambda=\gamma^{-1}(C_1+1)$.

The proof of Proposition \cite[Prop.\ 5.1]{bovo2022variational}, which gives another gradient bound for $u^{\eps,\delta;\gamma}_m$ (uniformly in $m$) requires analogous changes. In particular, \cite[Eq. (5.10)]{bovo2022variational} becomes
\begin{align*}
\xi\langle\nabla w^n,\nabla\big(|\nabla u^n|^2_\gamma-(f^\gamma)^2\big)\rangle\ge &\,\lambda|\nabla u|^2_\gamma-|\nabla u|^3_d|\nabla \xi|_d-\xi \hat R_n\ge\lambda\gamma|\nabla u|_d^2-|\nabla u|^3_d|\nabla \xi|_d-\xi \hat R_n,
\end{align*}
where $\xi$ is a cut-off function \eqref{eq:xi} supported on $B_{m_0}$ for fixed $m_0<m$, and $\hat R_n$ is a vanishing remainder. The rest of the proof continues as in the original paper. Care is only needed to replace $\bar \lambda$ in \cite{bovo2022variational} with $\gamma^{-1}\bar \lambda$.

Thanks to the gradient bounds, and following the arguments from \cite{bovo2022variational}, we show the existence and uniquenss of solution to \eqref{eq:penalisedb}. Then, letting $m\to\infty$, we obtain the unique solution $u^{\eps,\delta;\gamma}$ of \eqref{eq:penalised}; for this convergence we need a bound on the penalisation term $\psi_\eps(|\nabla u^{\eps,\delta;\gamma}|^2_\gamma-(f^\gamma)^2)$. For that we argue as in \cite[Lem.\ 5.8]{bovo2022variational}. Its proof still holds upon noticing that the last term of \cite[Eq.\ (5.34)]{bovo2022variational} now reads 
\[
\frac{1}{2}\xi\psi'_\eps(\bar\zeta_n)\sum_{i,j=1}^da_{i,j}\Big(2\langle \nabla^0 \partial_{x_i}w^n,\nabla^0 \partial_{x_j}w^n\rangle+2\gamma\langle \nabla^1 \partial_{x_i}w^n,\nabla^1 \partial_{x_j}w^n\rangle\Big)
\]
where $\bar \zeta_n$ is the same as in \cite{bovo2022variational} and $w_n$ is a proxy for $u^{\eps,\delta;\gamma}$.
The sum can be bounded from below by
\begin{align*}
&\sum_{i,j=1}^da_{ij}\Big(2\langle \nabla^0 \partial_{x_i}w^n,\nabla^0\partial_{x_j} w^n\rangle+2\gamma\langle \nabla^1 \partial_{x_i}w^n,\nabla^1 \partial_{x_j}w^n\rangle\Big)\\
&=2\sum_{k=1}^{d_0}\sum_{i,j=1}^da_{ij} (\partial_{x_ix_k}w^n)( \partial_{x_jx_k}w^n)+2\gamma\sum_{k=d_0+1}^{d}\sum_{i,j=1}^da_{ij} (\partial_{x_ix_k}w^n)( \partial_{x_jx_k}w^n)\\
&\geq 2\sum_{k=1}^{d_0}|\nabla (\partial_{x_k}w^n)|_d^2+2\gamma \sum_{k=d_0+1}^{d}|\nabla (\partial_{x_k}w^n)|_d^2\geq 2\gamma |D^2 w^n|_{d\times d}^2.
\end{align*}

Moreover, the first equation in \cite[p.~28]{bovo2022variational} becomes 
\begin{align*}
\big\langle a\nabla \xi,&\,\nabla (\cH^\gamma(\nabla w^n)-(f^\gamma)^2) \big\rangle \geq\,-\xi\frac{\theta\gamma}{4}|D^2w^n|_{d\times d}^2-\frac{16}{\theta\gamma}\bar a_m^2 d^4 C_0|\nabla w^n|_d^2
\end{align*}
and the first equation in \cite[p.~30]{bovo2022variational} becomes
\begin{align*}
\sum_{k=1}^d \partial_{x_k} w^n \mathcal{L}_{x_k} w^n\le \tfrac{\theta\gamma}{8}|D^2 w^n|^2_{d\times d}+\frac{C_1}{\gamma}(N_1+1)^2,
\end{align*}
where $\cL_{x_k}$ is the second order differential operator defined as
\[
\big(\cL_{x_k}\varphi\big)(x)=\frac12\mathrm{tr}\big(\partial_{x_k}a(x)D^2\varphi(x)\big)+\langle\partial_{x_k}b(x),\nabla\varphi(x)\rangle,\quad\text{for $\varphi\in C^\infty(\R^d)$.}
\]
The constants $C_0,C_1,N_1,\bar a_m$ are independent of $\eps$ and $\delta$ and they are defined in \cite{bovo2022variational}. 
The rest of the proof continues as in the original paper.

From a probabilistic perspective, the solutions of the penalised problems admit representations in terms of 2-player, zero-sum stochastic differential games. Those games depend on a Hamiltonian function \cite[Eq.\ (4.4)]{bovo2022variational}, which in our setting reads
\begin{align}\label{eq:Ham}
H^{\eps,\gamma}(t,x,y)\coloneqq \sup_{p\in\R^d}\big\{\langle y,p\rangle_\gamma-\psi_{\eps}\big(|p|^2_\gamma-(f^\gamma(t))^2\big)\big\}.
\end{align}
For the problems on bounded domains the results in \cite[Prop. 4.1 and 4.3]{bovo2022variational} continue to hold. The only observation we need is that the first-order condition for $H^{\eps,\gamma}$ is the same as the one for $H^{\eps}_m$ used in the proof of \cite[Prop.\ 4.1]{bovo2022variational}, i.e.,
\begin{align*}
y_i&=\psi_{\varepsilon}'\big(|p|^2_\gamma- (f^\gamma(t))^2\big)2p_i\:\quad\text{for $i=1,\ldots, d_0$},\\
\gamma y_i&=\psi_{\varepsilon}'\big(|p|^2_\gamma- (f^\gamma(t))^2\big)2\gamma p_i\quad\text{for $i=d_0+1,\ldots, d$}.
\end{align*}
In vector notation, we have $y=\psi_{\varepsilon}'(|p|^2_\gamma- (f^\gamma(t))^2)2p$, which is precisely the same as in the paragraph above \cite[Eq.~(4.15)]{bovo2022variational}. Similarly, \cite[Prop. 5.4 and 5.5]{bovo2022variational} continue to hold by the same argument.

Another very small tweak affects \cite[Eq. (5.18)]{bovo2022variational} when $H^{\eps}$ is replaced by $H^{\eps,\gamma}$. Taking $p=(\eps /2)y$ in \eqref{eq:Ham} and using $\psi_\eps(z) \le z / \eps$ yields
\begin{align*}
H^{\varepsilon,\gamma}(t,x,y)\geq &\,\tfrac{\eps}{2}|y|_{\gamma}^2-\psi_\eps\big(|\tfrac{\eps}{2}y|^2_\gamma-(f^\gamma(t))^2\big)\geq \tfrac{\eps}{2}|y|_{\gamma}^2-\psi_\eps\big(|\tfrac{\eps}{2}y|^2_\gamma\big)
\geq \tfrac{\eps\gamma}{4}|y|_d^2.\notag
\end{align*}
Thus \cite[Eq. (5.26)]{bovo2022variational} holds with $\eps$ replaced by $\gamma\eps$.

All the remaining arguments from \cite{bovo2022variational} remain unaltered. \hfill $\square$

\section{}
We give below an extension of the result in \cite[Lemma 5.1]{deangelis2019numerical}. 
\begin{lemma}\label{lem:DGI_jumps}
Let $X$ be a real valued c\`adl\`ag semimartingale with jumps of bounded variation and let $L_t^0(X)$ be its local time at $0$ in the time-interval $[0,t]$. Then, for any $\eps\in(0,1)$ we have
\begin{align*}
\E[L_t^0(X)] \leq &4\eps -2\E\Big[\int_0^t\!\big(\mathds{1}_{\{X_s\in[0,\eps)\}}+\mathds{1}_{\{X_s\geq\eps\}}e^{1-\frac{X_s}{\eps}}\big)\,\ud X_s^c\Big]\\
&+\frac{1}{\eps}\E\Big[\int_0^t\!\mathds{1}_{\{X_s>\eps\}}e^{1-\frac{X_s}{\eps}}\,\ud \langle X\rangle_s^c\Big]+ 2 \E\Big[\sum_{0\leq s\leq t}|\Delta X_s|\Big],
\end{align*}
where $X_s^c$ and $\langle X\rangle_s^c$ are the continuous parts of $X$ and of the quadratic variation of $X$, respectively, and $\Delta X_s\coloneqq X_s-X_{s-}$.
\end{lemma}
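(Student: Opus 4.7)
The plan is to extend the classical smooth-approximation argument for continuous semimartingales from \cite[Lem.\ 5.1]{deangelis2019numerical} by replacing It\^o's formula with the Meyer-It\^o formula for c\`adl\`ag semimartingales and by designing a test function whose distributional second derivative places a Dirac mass at the origin (to isolate $L_t^0(X)$) and produces an exponentially weighted Lebesgue part on $(\eps,\infty)$ (to generate the $d\langle X\rangle^c$-term with the required prefactor $1/\eps$).

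Concretely, I would introduce $\Phi_\eps:\R\to\R$ defined piecewise by $\Phi_\eps(x)=0$ for $x\le 0$, $\Phi_\eps(x)=-2x$ for $0<x<\eps$, and $\Phi_\eps(x)=2\eps e^{1-x/\eps}-4\eps$ for $x\ge\eps$. A direct check shows that $\Phi_\eps$ is continuous, $C^1$ away from $0$, takes values in $[-4\eps,0]$, is $2$-Lipschitz (since $|\Phi_\eps'|\le 2$), and writes as a difference of two convex functions via $A=\Phi_\eps+2x^+$ and $B=2x^+$. Its second derivative as a signed Radon measure is
\[
\Phi_\eps''(dx)=-2\,\delta_0(dx)+\tfrac{2}{\eps}\,e^{1-x/\eps}\,\mathds{1}_{\{x>\eps\}}\,dx.
\]
Applying the Meyer-It\^o formula to $\Phi_\eps(X_t)$, converting $\int_\R L_t^a(X)\,\Phi_\eps''(da)$ via the occupation-times formula, splitting $dX_s=dX_s^c+\Delta X_s$ in the resulting stochastic integral so that the contribution $\Phi_\eps'(X_{s-})\Delta X_s$ cancels the linear part of the Meyer-It\^o jump correction, and rearranging to solve for $L_t^0(X)$ yields
\begin{align*}
L_t^0(X) &= \Phi_\eps(X_0)-\Phi_\eps(X_t) +\int_0^t\Phi_\eps'(X_{s-})\,dX_s^c \\
&\quad +\tfrac{1}{\eps}\int_0^t\mathds{1}_{\{X_s>\eps\}}e^{1-X_s/\eps}\,d\langle X\rangle^c_s +\sum_{0<s\le t}\big(\Phi_\eps(X_s)-\Phi_\eps(X_{s-})\big).
\end{align*}
Taking expectations and bounding $\Phi_\eps(X_0)-\Phi_\eps(X_t)\le 4\eps$, reading off $\Phi_\eps'(x)=-2\big(\mathds{1}_{\{x\in[0,\eps)\}}+\mathds{1}_{\{x\ge\eps\}}e^{1-x/\eps}\big)$ for the right-continuous representative at the single discontinuity $x=0$, and controlling the jump sum via $|\Phi_\eps(X_s)-\Phi_\eps(X_{s-})|\le 2|\Delta X_s|$ from $2$-Lipschitzness, then produces the claimed inequality.

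The main obstacle I anticipate is not the algebra but the measure-theoretic bookkeeping around the point $x=0$: one has to justify the appearance of the half-open indicator $\mathds{1}_{\{X_s\in[0,\eps)\}}$ in the $dX^c$-term, which reduces to picking the right-continuous representative of $\Phi_\eps'$ at the jump point of its derivative and noting that any two such representatives differ only on a level set that is $d\langle X\rangle^c$-null (by the occupation times formula) and negligible for $dX^c$. A secondary technicality is a-priori integrability of the stochastic integral and of $\sum_s|\Delta X_s|$, which for a general c\`adl\`ag semimartingale need not be finite; this would be handled by a localisation via hitting times of growing balls (in the spirit of the $\tau_R$ used elsewhere in the paper) before taking expectations, and the final inequality recovered by Fatou's lemma in the limit.
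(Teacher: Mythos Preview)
Your proposal is correct and follows essentially the same approach as the paper: your test function satisfies $\Phi_\eps=-2g_\eps$ where $g_\eps$ is the paper's auxiliary function, so the Meyer--It\^o computation, the bound $4\eps$ on the difference of endpoint values, and the $2$-Lipschitz control of the jump sum coincide line by line. The paper does not dwell on the choice of representative of the derivative at $x=0$ or on localisation for integrability, so your additional remarks on those points are more careful than the original but do not constitute a different method.
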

\begin{proof}
For $\eps\in(0,1)$,  let
\begin{equation*}
g_\eps(y)=0\cdot\mathds{1}_{\{y<0\}}+y\mathds{1}_{\{0\leq y<\eps\}}+\eps\big(2-e^{1-\frac{y}{\eps}}\big)\mathds{1}_{\{y\geq\eps\}}, \qquad  \text{for }y\in\R.
\end{equation*} 
Following arguments from \cite[Lemma 5.1]{deangelis2019numerical} we have that $g_\eps\in C^1(\R\setminus\{0\})$, it is semi-concave, i.e., $y\mapsto g_\eps(y)-y^2$ is concave. Moreover, $g_\eps$ is such that
\begin{align*}
&0\leq g_\eps(y)\leq 2\eps,\quad\text{for $y\in\R$;}\\
&g'_\eps(y)=\mathds{1}_{\{0\leq y\leq \eps\}}+e^{1-\frac{y}{\eps}}\mathds{1}_{\{y\geq \eps\}},\quad\text{for $y\in\R$;}\\
&g''_\eps(y)=0,\quad\text{for $y\in(-\infty,0)\cup(0,\eps)$;}\\
&g''_\eps(y)=-\eps^{-1}e^{1-\frac{y}{\eps}},\quad\text{for $y>\eps$.}
\end{align*}
Applying \cite[Thm.\ IV.70 and Cor.\ IV.70.1]{protter2005stochastic} to $g_\eps(X_t)$ we get 
\begin{align*}
g_\eps(X_t)-g_\eps(X_0)=&\,\int_0^t g_\eps'(X_{s-})\,\ud X_s^c+\frac{1}{2}\int_0^t g_\eps''(X_s)\mathds{1}_{\{X_s\neq0\}\cap\{X_s\neq\eps\}}\,\ud \langle X\rangle_s^c\\
&\,+\frac12 L_t^0(X)+\sum_{0\leq s\leq t}\big(g_{\eps}(X_s)-g_{\eps}(X_{s-})\big).
\end{align*}
Rearranging terms and multiplying by 2, using $|g_{\eps}(X_s)-g_{\eps}(X_{s-})|\leq |X_s-X_{s-}|=|\Delta X_s|$ by Lipschitz property of $g_\eps$ and that $X$ has jumps of finite variation, we get
\begin{align*}
L_t^0(X)\leq\, 4\eps-2\int_0^t\! g_\eps'(X_{s-})\,\ud X_s^c-\int_0^t \!g_\eps''(X_s)\mathds{1}_{\{X_s\neq0\}\cap\{X_s\neq\eps\}}\,\ud \langle X\rangle_s^c+2\sum_{0\leq s\leq t}|\Delta X_s|.
\end{align*}
Using the properties of $g_\eps$ listed above and applying expectation we obtain the desired result.
\end{proof}

\bibliographystyle{plain}
\bibliography{Bibliography}

\begin{thebibliography}{10}

\bibitem{bandini2022optimal}
E.~Bandini, T.~De~Angelis, G.~Ferrari, and F.~Gozzi.
\newblock Optimal dividend payout under stochastic discounting.
\newblock {\em Math.\ Finance}, 32(2):627--677, 2022.

\bibitem{bayraktar2019controller}
E.~Bayraktar and J.~Li.
\newblock On the controller-stopper problems with controlled jumps.
\newblock {\em Appl.\ Math.\ Optim.}, 80(1):195--222, 2019.

\bibitem{bensoussan1974nonlinear}
A~Bensoussan and A.~Friedman.
\newblock Nonlinear variational inequalities and differential games with
  stopping times.
\newblock {\em J.\ Funct.\ Anal.}, 16(3):305--352, 1974.

\bibitem{bovo2022variational}
A.~Bovo, T.~De~Angelis, and E.~Issoglio.
\newblock Variational inequalities on unbounded domains for zero-sum
  singular-controller vs. stopper games.
\newblock {\em To appear in Math.\ Oper.\ Res. (arXiv:2203.06247)}, 2022.

\bibitem{cai2022american}
C.~Cai, T.~De~Angelis, and J.~Palczewski.
\newblock The {A}merican put with finite-time maturity and stochastic interest
  rate.
\newblock {\em Math.\ Finance}, 32(4):1170--1213, 2022.

\bibitem{chiarolla2016hilbert}
M.B. Chiarolla and T.~De~Angelis.
\newblock Optimal stopping of a {H}ilbert space valued diffusion: An infinite
  dimensional variational inequality.
\newblock {\em Appl.\ Math.\ Optim.}, 73(2):271--312, 2016.

\bibitem{de2017optimal}
T.~De~Angelis, S.~Federico, and G.~Ferrari.
\newblock Optimal boundary surface for irreversible investment with stochastic
  costs.
\newblock {\em Math.\ Oper.\ Res.}, 42(4):1135--1161, 2017.

\bibitem{de2015nonconvex}
T.~De~Angelis, G.~Ferrari, and J.~Moriarty.
\newblock A nonconvex singular stochastic control problem and its related
  optimal stopping boundaries.
\newblock {\em SIAM J.\ Control Optim.}, 53(3):1199--1223, 2015.

\bibitem{deangelis2019solvable}
T.~De~Angelis, G.~Ferrari, and J.~Moriarty.
\newblock A solvable 2-dimensional degenerate singular stochastic control
  problem with nonconvex costs.
\newblock {\em Math.\ Oper.\ Res.}, 44(2):512--531, 2019.

\bibitem{deangelis2019numerical}
T.~De~Angelis, M.~Germain, and E.~Issoglio.
\newblock A numerical scheme for stochastic differential equations with
  distributional drift.
\newblock {\em Stoch.\ Process.\ Appl.}, (154):55--90, 2022.

\bibitem{evans10}
{L.C.} Evans.
\newblock {\em Partial differential equations}, volume~19 of {\em Graduate
  Studies in Mathematics}.
\newblock American Mathematical Society, Providence, RI, Providence, R.I.,
  second edition, 2010.

\bibitem{federico2021singular}
S.~Federico, G.~Ferrari, and P.~Schuhmann.
\newblock Singular control of the drift of a {B}rownian system.
\newblock {\em Appl.\ Math.\ Optim.}, 84:561--590, 2021.

\bibitem{federico2014characterization}
S.~Federico and H.~Pham.
\newblock Characterization of the optimal boundaries in reversible investment
  problems.
\newblock {\em SIAM J.\ Control Optim.}, 52(4):2180--2223, 2014.

\bibitem{ferrari2015integral}
G.~Ferrari.
\newblock On an integral equation for the free-boundary of stochastic,
  irreversible investment problems.
\newblock {\em Ann.\ Appl.\ Probab.}, 25(1):150--176, 2015.

\bibitem{fleming2006controlled}
W.H. Fleming and H.M. Soner.
\newblock {\em Controlled {M}arkov processes and viscosity solutions},
  volume~25 of {\em Stochastic Modelling and Applied Probability}.
\newblock Springer, New York, second edition, 2006.

\bibitem{friedman2008partial}
A.~Friedman.
\newblock {\em Partial differential equations of parabolic type}.
\newblock Courier Dover Publications, 2008.

\bibitem{guo2009class}
X.~Guo and P.~Tomecek.
\newblock A class of singular control problems and the smooth fit principle.
\newblock {\em SIAM J.\ Control Optim.}, 47(6):3076--3099, 2009.

\bibitem{hamadene2006stochastic}
S.~Hamadène.
\newblock Mixed zero-sum stochastic differential game and {American} game
  options.
\newblock {\em SIAM J.\ Control Optim.}, 45(2):496--518, 2006.

\bibitem{hernandez2015zero}
D.~Hernandez-Hernandez, {R.S.} Simon, and M.~Zervos.
\newblock A zero-sum game between a singular stochastic controller and a
  discretionary stopper.
\newblock {\em Ann.\ Appl.\ Probab.}, 25(1):46--80, 2015.

\bibitem{hernandez2015zsgsingular}
D.~Hernandez-Hernandez and K.~Yamazaki.
\newblock Games of singular control and stopping driven by spectrally one-sided
  {L}évy processes.
\newblock {\em Stoch.\ Process.\ Appl.}, 125(1):1--38, 2015.

\bibitem{karatzas2001controller}
I.~Karatzas and W.D. Sudderth.
\newblock The controller-and-stopper game for a linear diffusion.
\newblock {\em Ann.\ Probab.}, 29(3):1111--1127, 2001.

\bibitem{karatzas2008martingale}
I.~Karatzas and I.-M. Zamfirescu.
\newblock Martingale approach to stochastic differential games of control and
  stopping.
\newblock {\em Ann.\ Probab.}, 36(4):1495--1527, 2008.

\bibitem{krylov2008lectures}
N.V. Krylov.
\newblock {\em Lectures on elliptic and parabolic equations in {S}obolev
  spaces}, volume~96 of {\em Graduate Studies in Mathematics}.
\newblock American Mathematical Society, Providence, RI, 2008.

\bibitem{krylov1980controlled}
N.V. Krylov.
\newblock {\em Controlled diffusion processes}, volume~14 of {\em Stochastic
  Modelling and Applied Probability}.
\newblock Springer-Verlag, Berlin, 2009.
\newblock Translated from the 1977 Russian original by A.B. Aries, Reprint of
  the 1980 edition.

\bibitem{lokka2011model}
A.~L{\o}kka and M.~Zervos.
\newblock A model for the long-term optimal capacity level of an investment
  project.
\newblock {\em Int.\ J.\ Th.\ Appl.\ Finance}, 14(02):187--196, 2011.

\bibitem{lokka2013long}
A.~L{\o}kka and M.~Zervos.
\newblock Long-term optimal investment strategies in the presence of adjustment
  costs.
\newblock {\em SIAM J.\ Control Optim.}, 51(2):996--1034, 2013.

\bibitem{merhi2007model}
A.~Merhi and M.~Zervos.
\newblock A model for reversible investment capacity expansion.
\newblock {\em SIAM J. Control Optim.}, 46(3):839--876, 2007.

\bibitem{protter2005stochastic}
P.E. Protter.
\newblock {\em Stochastic integration and differential equations}, volume~21 of
  {\em Stochastic Modelling and Applied Probability}.
\newblock Springer-Verlag, Berlin, second edition, 2005.

\bibitem{soner1991free}
H.M. Soner and S.E. Shreve.
\newblock A free boundary problem related to singular stochastic control: the
  parabolic case.
\newblock {\em Comm.\ Partial Differential Equations}, 16(2-3):373--424, 1991.

\end{thebibliography}

\end{document}